\newtheorem{theorem}{Theorem}
\newtheorem{corollary}[theorem]{Corollary}
\newtheorem{lemma}[theorem]{Lemma}
\theoremstyle{definition}
\newtheorem{example}[theorem]{Example}
\newtheorem{definition}[theorem]{Definition}
\newtheorem{remark}[theorem]{Remark}
\newcommand{\qedMP}{}
\newcommand{\N}{\mathbb{N}}
\newcommand{\Z}{\mathbb{Z}}
\newcommand{\Q}{\mathbb{Q}}
\newcommand{\R}{\mathbb{R}}
\newcommand{\C}{\mathbb{C}}
\newcommand{\func}{f}
\newcommand{\funcmin}{\func_{\min}}
\newcommand{\matspace}{\mathcal{S}^k}
\newcommand{\redpolys}[1]{\mathcal{R}[{#1}]}
\newcommand{\kraw}{\mathcal{K}^n}
\newcommand{\krawnorm}{\widehat{\mathcal{K}}^n}
\newcommand{\krawnormnosup}{\widehat{\mathcal{K}}}
\newcommand{\krawlim}{\widehat{\mathcal{K}}^\infty}
\newcommand{\charsum}{X}
\newcommand{\cube}[1]{\mathbb{B}^{#1}}
\newcommand{\qcube}[1]{\mathbb (\Z / q\Z)^{#1}}
\newcommand{\harmbound}{\gamma}
\newcommand{\kernel}{K}
\newcommand{\kernelop}{\mathbf{K}}
\newcommand{\linlamb}{\tilde{\Lambda}}
\newcommand{\nonlinlamb}{\Lambda}
\newcommand{\upbound}[1]{\smash{\func^{({#1})}}}
\newcommand{\upboundsym}[1]{\smash{\func_{\rm{sym}}^{({#1})}}}
\newcommand{\lowbound}[1]{\smash{\func_{({#1})}}}
\DeclareMathOperator{\Aut}{Aut}
\newcommand{\stab}{{\rm{St}}}
\DeclareMathOperator{\spann}{span}
\newcommand{\Sym}{\text{Sym}}
\title{Sum-of-squares hierarchies for binary polynomial optimization}
\author{Monique Laurent}
\address{M. Laurent, Centrum Wiskunde \& Informatica (CWI), Amsterdam and Tilburg University}
\email{monique.laurent@cwi.nl}
\author{Lucas Slot}
\address{L. Slot, Centrum Wiskunde \& Informatica (CWI), Amsterdam}
\email{lucas.slot@cwi.nl}
\subjclass[2010]{90C22, 90C23, 90C26}
\thanks{%
This work is supported by the European Union's Framework Programme for Research and Innovation Horizon
2020 under the Marie Skłodowska-Curie Actions Grant Agreement No. 764759 (MINOA)}
\date{\today}
\begin{document}

\begin{abstract}
\sloppy
We consider the sum-of-squares hierarchy of approximations for the problem of minimizing a polynomial $f$ over the boolean hypercube ${\cube{n}=\{0,1\}^n}$. This hierarchy provides for each integer $r \in \N$ a lower bound $\lowbound{r}$ on the minimum $\funcmin$ of $f$, given by the largest scalar $\lambda$ for which the polynomial $f - \lambda$ is a sum-of-squares on $\cube{n}$ with degree at most $2r$. 
We analyze the quality of these bounds by estimating the worst-case error $\funcmin - \lowbound{r}$ in terms of the least roots of the Krawtchouk polynomials. As a consequence, 
for fixed  $t \in [0, 1/2]$, we can show that  this worst-case error  in the regime $r \approx t \cdot n$  is of the order $1/2 - \sqrt{t(1-t)}$ as $n$ tends to $\infty$. 
Our proof 
combines   classical Fourier analysis on $\cube{n}$ with the 
polynomial kernel technique and existing results on the extremal roots of Krawtchouk polynomials.
This link to roots of orthogonal polynomials relies on a  connection between the hierarchy of lower bounds $\lowbound{r}$ and another hierarchy of upper bounds $\upbound{r}$, for which we are also able to establish the same error analysis. Our analysis extends to the minimization of a polynomial over the $q$-ary cube $\qcube{n}$. Furthermore, our results apply to the setting of matrix-valued polynomials.
\end{abstract}

\maketitle


\section{Introduction}
We consider the problem of minimizing a polynomial $f \in \R[x]$ of degree $d \leq n$ 
over the $n$-dimensional boolean hypercube $\cube{n} = \{0, 1\}^n$, i.e., of computing
\begin{equation}
\label{EQ:mainprob}
        \funcmin := \min_{x \in \cube{n}} f(x).
\end{equation}
This optimization problem is NP-hard in general. Indeed, as is well-known, one can model an instance of \textsc{max-cut} on the complete graph $K_n$ with edge weights $w=(w_{ij})$ as a problem of the form \eqref{EQ:mainprob} by setting:
\[
	f(x) = -\sum_{1\le i < j\le n} w_{ij} (x_i - x_j)^2. 
\]
As another example one can compute the stability number $\alpha(G)$ of a graph $G=(V,E)$ via the program
$$\alpha(G) = \max_{x\in\cube{|V|}} \sum_{i\in V}x_i -\sum_{\{i,j\}\in E} x_ix_j.$$
{One may replace  the boolean cube $\cube{n}=\{0,1\}^n$ by   the discrete cube $\{\pm 1\}^n$, in which case maximizing a quadratic polynomial $x^\top Ax$  has many other applications, e.g., to \textsc{max-cut} \cite{GW}, to the cut norm \cite{AN2004}, or to correlation clustering \cite{BBC}. Approximation algorithms are known depending on the structure of the matrix $A$ (see \cite{AN2004,CW2004,GW}), but the problem is known to be NP-hard to approximate within any  factor less than 13/11 \cite{ABHKS}.
}

Problem \eqref{EQ:mainprob} also permits to capture polynomial optimization over  a general region of the form $\cube{n}\cap P$ where $P$ is a polyhedron  \cite{LasserreORL2016} and thus a broad range of combinatorial optimization problems.
The general intractability of problem~\eqref{EQ:mainprob}  motivates the search for tractable bounds on the minimum value in  (\ref{EQ:mainprob}).
 In particular, several lift-and-project methods have been proposed, based on lifting the problem to higher dimension by introducing new variables modelling higher degree monomials. Such methods also apply to constrained problems on $\cube{n}$ where the constraints can be linear or polynomial; see, e.g.,   \cite{BCC1993}, \cite{Lasserre2001b}, \cite{LovaszSchrijver1991}, \cite{Rothvoss}, \cite{SheraliAdams1990}, \cite{Tuncel2010}.
In  \cite{Laurent2001} it is shown that the sums-of-squares hierarchy of Lasserre \cite{Lasserre2001b} in fact refines the other proposed hierarchies.  
As a consequence the sum-of-squares approach for polynomial optimization over $\cube{n}$ has received a great deal of attention in the recent years and there is a vast literature on this topic.
Among many other results, let us just mention its use to show lower bounds on the size of semidefinite programming relaxations for combinatorial problems such as max-cut, maximum stable sets  and TSP in \cite{Lee}, and the links to the Unique Game Conjecture in \cite{Barak}. 
For background about the sum-of-squares hierarchy applied to polynomial optimization over general semialgebraic sets we refer to \cite{Lasserre2001}, \cite{Lasserre2009}, \cite{Laurent2009}, \cite{Parrilo2000} and further references therein.


This  motivates the interest in gaining a better understanding of  the quality of the bounds produced by the sum-of-squares hierarchy. Our objective in this paper is to investigate such an error analysis for this hierarchy applied to binary polynomial optimization as in   \eqref{EQ:mainprob}.

\subsection{The sum-of-squares hierarchy on the boolean cube}
The \emph{sum-of-squares hierarchy} was  introduced by Lasserre \cite{Lasserre2001,Lasserre2001b} and Parrilo \cite{Parrilo2000} as a tool to produce tractable lower bounds for polynomial optimization problems. When applied to problem \eqref{EQ:mainprob} it  provides  for any integer $r\in \N$ a lower bound $\lowbound{r} \leq \funcmin$ on $\funcmin$, given by:
\begin{equation}
	\label{EQ:lowbound}
    \lowbound{r} := \sup_{\lambda \in \R} \left\{f(x) - \lambda \text{ is a sum-of-squares of degree at most } 2r \text{ on } \cube{n} \right\}.
\end{equation}
The condition `$f(x)-\lambda$ is a sum-of-squares of degree at most $2r$ on $\cube{n}$' means that 
 there exists a sum-of-squares polynomial $s \in \Sigma_r$ such that $f(x) - \lambda = s(x)$ for all $x \in \cube{n}$, or, equivalently, the polynomial $f-\lambda-s$ belongs to the ideal generated by the polynomials $x_1-x_1^2,\ldots,x_n-x_n^2$.
Throughout, $\Sigma_r$ denotes the set of sum-of-squares polynomials with degree at most $2r$, i.e., of the form $\sum_i p_i^2$  with $p_i\in \R[x]_r$.
 
{ 
As sums of squares of polynomials can be modelled using semidefinite programming,  problem  \eqref{EQ:lowbound} can be reformulated as a semidefinite program of size polynomial in $n$ for fixed $r$ \cite{Lasserre2001,Parrilo2000}, see also \cite{Laurentfinite}. In the case of unconstrained boolean optimization, the resulting semidefinite program is known to have an optimum solution with small coefficients (see \cite{ODonnell2017} and \cite{Raghavendra2017}). For fixed $r$, the parameter $\lowbound{r}$ may therefore be computed efficiently (up to any precision). 
}

The bounds $\lowbound{r}$ have finite convergence: $\lowbound{r}=f_{\min}$ for $r\ge n$ \cite{Lasserre2001b,Laurent2001}.  In fact, it 
has been shown in 
\cite{SakaueTakedaKim2017} 
that the bound $\lowbound{r}$ is exact already for $2r \geq n+d-1$. That is, 
\begin{equation}
    \label{EQ:exactbound}
    \lowbound{r} = \funcmin \text{ for } r \geq \frac{n+d-1}{2}.
\end{equation}
{In addition, it is shown in  \cite{SakaueTakedaKim2017} that  the bound $\lowbound{r}$ is exact for  ${2r\ge n+d-2}$ when the polynomial $f$ has only monomials of even degree. This  extends an earlier result of  \cite{FawziSaundersonParillo2016} shown for quadratic forms ($d=2$), which applies in particular to the case of \textsc{max-cut}. Furthermore, this result is tight for \textsc{max-cut}, since  one needs to go up to order $2r\ge n$ in order to reach finite convergence (in the cardinality case when all edge weights are 1) \cite{Laurent2003}. Similarly, the result \eqref{EQ:exactbound} is tight when $d$ is even and $n$ is odd \cite{KurpiszLeppanenMastrolilli}.

The main contribution of this work is an analysis of the quality of the bounds $\lowbound{r}$ for parameters $r, n \in \N$ which fall outside of this regime, i.e., $2r<n+d-1$. 
The following is our main result, which expresses the error of the bound $\lowbound{r}$ in terms of the roots of Krawtchouk polynomials, which are classical univariate orthogonal polynomials with respect to a discrete measure on the set $\{0, 1, \ldots, n\}$ (see Section \ref{SEC:Fourier} for details).

\begin{theorem}
\label{THM:main}
Fix $d \leq n$ and let $f \in \R[x]$ be a polynomial of degree $d$. For $r, n \in \N$, let $\xi^n_{r}$ be the least root of the degree $r$ Krawtchouk polynomial \eqref{EQ:krawdef} with parameter $n$.
Then, if  $(r+1)/n \leq 1/2$ and ${d(d+1) \cdot \xi_{r+1}^n / n \leq 1/2}$, we have:
\begin{align}\label{eqfrup}
	\frac{\funcmin - \lowbound{r}}{\|f\|_{\infty}} \leq 2 C_d \cdot \xi_{r+1}^n / n.
\end{align}
Here $C_d > 0$ is an absolute  constant depending only on $d$ and we set 
$\|f\|_\infty := \max_{x \in \cube{n}} | f(x) |.$
\end{theorem}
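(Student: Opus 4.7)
My strategy is to combine Fourier analysis on $\cube{n}$ with the polynomial kernel (``needle'') technique, routing the lower-bound analysis through the companion upper bound hierarchy $\upbound{r}$ advertised in the abstract. The starting point is to construct, at a fixed minimizer $x^* \in \cube{n}$ of $f$, a degree-$\le 2r$ SOS ``needle'' $\needle{r}{x^*}$ on $\cube{n}$ that is sharply concentrated at $x^*$; the analysis of $\upbound{r}-\funcmin$ amounts to quantifying this concentration, and a $\lowbound{r}\leftrightarrow\upbound{r}$ duality then converts the same ingredient into an SOS certificate $f-\lambda=\sigma$ with $\sigma\in\Sigma_r$ and $\lambda$ close to $\funcmin$.

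To build the needle I would use the Fourier basis of characters $\chi_S(x)=\prod_{i\in S}(1-2x_i)$ on $\cube{n}$. Symmetrizing over the stabilizer of $x^*$ in $\Aut(\cube{n})$ (essentially $S_n$) groups the basis by Hamming weight, and sums $\sum_{|S|=k}\chi_S(x^*)\chi_S(x)$ at a point of Hamming distance $s$ from $x^*$ yield the Krawtchouk value $\mathcal{K}^n_k(s)$. The truncated Christoffel--Darboux kernel
\[
 \needle{r}{x^*}(x)=\Bigl(\sum_{k=0}^r \alpha_k \sum_{|S|=k}\chi_S(x^*)\chi_S(x)\Bigr)^2,
\]
with $\alpha_k$ the coefficients of the optimal $(r+1)$-point Gauss quadrature for the Krawtchouk measure, is manifestly SOS of degree $\le 2r$ on $\cube{n}$, and has mass concentrated on points $y$ with Hamming distance to $x^*$ bounded by $\xi^n_{r+1}$: the Gauss nodes are exactly the roots of $\mathcal{K}^n_{r+1}$.

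The smoothed value $\tilde{f}(x^*)=\sum_{y\in\cube{n}} f(y)\needle{r}{x^*}(y)/\sum_{y\in\cube{n}}\needle{r}{x^*}(y)$ then approximates $\funcmin$, with error controlled by the oscillation of $f$ within Hamming radius $\xi^n_{r+1}$. Since $\deg f=d$, a Markov-type inequality on $\cube{n}$ bounds this oscillation by $C_d\cdot\xi^n_{r+1}/n\cdot\|f\|_{\infty}$: the hypothesis $d(d+1)\xi^n_{r+1}/n\le 1/2$ keeps us in the linear Markov regime, while $(r+1)/n\le 1/2$ ensures $\xi^n_{r+1}\le n/2$ so that Krawtchouk symmetry is unambiguous. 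I expect the main hurdle to be precisely the final reduction: converting the concentration of $\needle{r}{x^*}$ together with the Markov smoothing bound into a globally valid SOS identity for $f-\lambda$ of degree $\le 2r$ modulo the ideal of $\cube{n}$. The Krawtchouk bandwidth statement is classical via the Gauss quadrature link, and Markov inequalities on the cube are standard, but closing the $\upbound{r}\leftrightarrow\lowbound{r}$ loop and extracting an absolute constant $C_d$ depending only on $d$ form the delicate combinatorial heart of the argument.
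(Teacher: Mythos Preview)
Your proposal correctly identifies most of the raw ingredients (Fourier/Krawtchouk analysis, an SOS kernel built from a univariate polynomial in the Hamming distance, the link to roots of $\mathcal{K}^n_{r+1}$ via the inner hierarchy), but it contains a genuine gap at the crucial step: the passage from $\upbound{r}$ to $\lowbound{r}$.

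What you describe --- a needle $\needle{r}{x^*}$ concentrated at the minimizer, the smoothed value $\tilde f(x^*)$, and a Markov-type oscillation bound --- is exactly an analysis of the \emph{inner} bound $\upbound{r}$ (this is essentially the content of Theorem~\ref{THM:mainup}). You then invoke an unspecified ``$\lowbound{r}\leftrightarrow\upbound{r}$ duality'' to convert this into an SOS certificate $f-\lambda=\sigma$. No such black-box duality exists: knowing that some SOS density concentrates mass near $x^*$ does not, by itself, produce a degree-$2r$ SOS decomposition of $f-\lambda$ on $\cube{n}$. This is precisely the step you flag as ``the delicate combinatorial heart of the argument,'' and your plan gives no mechanism for it.

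The paper closes this gap by a different device (due to Fang--Fawzi on the sphere). Instead of evaluating the kernel at $x^*$ to get a needle, it treats $K(x,y)=u^2(d(x,y))$ as a linear \emph{operator} $\kernelop$ on $\redpolys{x}$ and uses its \emph{inverse}: one writes $f+\delta=\kernelop(\kernelop^{-1}(f+\delta))$, and since $\kernelop$ sends nonnegative functions to degree-$\le 2r$ SOS, it suffices to force $\kernelop^{-1}(f+\delta)\ge 0$ on $\cube{n}$. By Funk--Hecke the eigenvalues of $\kernelop$ are the Krawtchouk coefficients $\lambda_i$ of $u^2$, so $\|\kernelop^{-1}-I\|$ is controlled by $\sum_{i\le d}|\lambda_i^{-1}-1|$, \emph{provided} one has a harmonic-component bound $\|p_k\|_\infty\le\gamma_d\|p\|_\infty$ with $\gamma_d$ independent of $n$ (Lemma~\ref{LEM:cube}). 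Choosing $u$ to make all $\lambda_i\approx 1$ is then a \emph{univariate} inner-hierarchy problem for the specific polynomial $g(t)=d-\sum_{i=1}^d\krawnorm_i(t)$ on $[0:n]$ --- not for $f$ itself --- and this is where $\xi^n_{r+1}$ and the hypothesis $d(d+1)\xi^n_{r+1}/n\le 1/2$ enter. Your needle construction corresponds to one column $K(\cdot,x^*)$ of the kernel; the missing idea is to use the full operator and its inverse, together with the $n$-independent harmonic bound, to obtain a global SOS identity.
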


The extremal roots of Krawtchouk polynomials are well-studied in the literature. The following result of Levenshtein \cite{Levenshtein}
shows their asymptotic behaviour. 
\begin{theorem}[\cite{Levenshtein}, Section 5]
\label{THM:Levenshtein}
For $t \in [0, 1/2]$, define the function
\begin{equation}\label{eqphitt}
	\varphi(t) = 1/2 - \sqrt{t(1-t)}.
\end{equation}
Then the least root $\xi_r^n$ of the degree $r$ Krawtchouk polynomial with parameter $n$ satisfies
\begin{equation}
\label{EQ:Levenshtein}
	\xi_r^n / n \leq \varphi(r/n) + c \cdot (r / n)^{-1/6} \cdot n^{-2/3}
\end{equation}
for some universal constant $c > 0$.
\end{theorem}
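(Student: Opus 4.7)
The plan is to translate the question about the least root of $K_{r+1}^n$ into a spectral question and then apply an asymptotic/saddle-point analysis. First, recall that the Krawtchouk polynomials satisfy a three-term recurrence
\begin{equation*}
(k+1) K_{k+1}^n(x) = (n-2x) K_k^n(x) - (n-k+1) K_{k-1}^n(x),
\end{equation*}
which after symmetrization realizes the zeros of $K_{r+1}^n$ as the eigenvalues of an explicit symmetric tridiagonal Jacobi matrix $J_{r+1}^n$ of size $r+1$. In particular, $\xi_{r+1}^n$ is the smallest eigenvalue of $J_{r+1}^n$, so by the Courant--Fischer characterization it suffices to exhibit a single unit vector $v$ with $v^\top J_{r+1}^n v \leq n\,\varphi(r/n) + c(r/n)^{-1/6}n^{1/3}$ to obtain the claimed upper bound.

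To find such a vector, I would work from the generating function identity $\sum_k K_k^n(x) z^k = (1+z)^{n-x}(1-z)^x$ and the contour integral $K_k^n(tn) = \frac{1}{2\pi i}\oint (1+z)^{(1-t)n}(1-z)^{tn} z^{-k-1}dz$. A saddle-point analysis in $z$ with $t=x/n$ identifies a critical value of $x/n$ below which the saddles are non-real (so $K_k^n$ oscillates and has zeros) and above which they are real (so $K_k^n$ is of fixed sign). Solving the equation that governs the transition gives exactly the heuristic prediction $x/n = \varphi(k/n)$ for the location of the smallest zero. The leading-order bound $\xi_{r+1}^n/n \leq \varphi(r/n)$ can then be obtained relatively cleanly by constructing an approximate trigonometric eigenvector of $J_{r+1}^n$ whose phase matches the WKB phase dictated by this saddle point, and then estimating its Rayleigh quotient.

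The sharper correction term $c(r/n)^{-1/6} n^{-2/3}$ reflects the Airy-type soft-edge universality familiar from discrete orthogonal polynomial ensembles: rescaling $x = n\varphi(r/n) + n^{1/3}(r/n)^{-1/6} s$ converts the recurrence, in the appropriate double-scaling regime, into a discretization of the Airy equation, whose first zero is $O(1)$ in the rescaled variable $s$, yielding the fluctuation scale $n^{1/3}(r/n)^{-1/6}$ and therefore the bound $n^{-2/3}(r/n)^{-1/6}$ on $\xi_{r+1}^n/n-\varphi(r/n)$. The main obstacle is making this edge analysis \emph{uniform} in $r/n$ over the whole range $(0,1/2]$, since $\varphi'(t)$ degenerates as $t\to 1/2$; rigorously one must either carry out a Deift--Zhou-style steepest-descent analysis of the contour integral above with explicit error tracking, or follow Levenshtein's more combinatorial route via the Christoffel--Darboux kernel together with positivity of associated quadrature weights. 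Either way, the delicate bookkeeping of error terms at the soft edge is the principal technical difficulty.
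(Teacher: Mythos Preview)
The paper does not prove Theorem~\ref{THM:Levenshtein}: it is quoted as a known result from Levenshtein's handbook chapter and used as a black box. There is therefore no ``paper's own proof'' to compare your proposal against.

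As for the proposal itself, what you have written is a reasonable heuristic outline of how such bounds on extremal Krawtchouk zeros are typically obtained --- the Jacobi-matrix reformulation, the saddle-point location giving $\varphi(t)=1/2-\sqrt{t(1-t)}$, and the $n^{-2/3}$ Airy edge scaling are all the right ingredients. But it is not a proof: you correctly identify that the entire content lies in the ``delicate bookkeeping of error terms at the soft edge'' and in making the analysis uniform in $r/n$, and you do not carry out any of that. The Rayleigh-quotient test vector you allude to is not actually constructed, and the claim that the leading-order bound $\xi_{r}^n/n\le\varphi(r/n)$ follows ``relatively cleanly'' from a WKB-phase ansatz is optimistic without details. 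Levenshtein's own argument (and related work of McEliece--Rodemich--Rumsey--Welch) proceeds via Christoffel--Darboux identities and interlacing/positivity arguments rather than steepest descent; if you want a self-contained proof, that is the route to follow, and you should consult \cite{Levenshtein} directly.
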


Applying \eqref{EQ:Levenshtein} to \eqref{eqfrup}, we find that the relative error of the bound $\lowbound{r}$ in the regime $r \approx t \cdot n$ behaves as the function $\varphi(t) = 1/2 - \sqrt{t(1-t)}$, up to a term in $O(1/n^{2/3})$, which vanishes as $n$ tends to $\infty$.  As an illustration, Figure~\ref{FIG:qphi} in Appendix \ref{APP:qary} shows the function $\varphi(t)$.

\subsection{A second hierarchy of bounds}
In addition to the \emph{lower} bound $\lowbound{r}$, Lasserre \cite{Lasserre2010} also defines an \emph{upper} bound $\upbound{r} \geq \funcmin$ on $\funcmin$ as follows:
\begin{equation}
\label{EQ:upbound}
	\upbound{r} := \inf_{s \in \Sigma_r} \left \{ \int_{\cube{n}} f(x) \cdot s(x) d\mu(x) : \int_{\cube{n}} s(x) d\mu(x) = 1\right \},
\end{equation}
where $\mu$ is the uniform probability measure on $\cube{n}$.
%
For fixed $r$, similarly to $\lowbound{r}$, one may compute $\upbound{r}$ (up to any precision) efficiently  by reformulating  problem \eqref{EQ:upbound} as a semidefinite program \cite{Lasserre2010}. Furthermore, as  shown in \cite{Lasserre2010} the bound is exact for some order $r$,  and it is not difficult to see that the bound $\upbound{r}$ is exact at order $r = n$ and that this is tight (see Section \ref{SEC:generalize}).

Essentially as a side result in the proof of Theorem \ref{THM:main}, we can show the following analog for the upper bounds $\upbound{r}$, which we believe to be of independent interest.

\begin{theorem}
\label{THM:mainup}
Fix $d \leq n$ and let $f \in \R[x]$ be a polynomial of degree $d$. 
Then, for any $r, n \in \N$ with $(r+1)/n \leq 1/2$, we have:
\begin{align*}
	\frac{\upbound{r} - \funcmin }{\|f\|_{\infty}} \leq C_d \cdot \xi_{r+1}^n / n,
\end{align*}
where $C_d > 0$ is the constant mentioned in Theorem \ref{THM:main}.
\end{theorem}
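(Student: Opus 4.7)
The plan is to exhibit, for a minimizer $x^*\in\cube{n}$ of $f$, a feasible sum-of-squares density $s\in\Sigma_r$ with $\int_{\cube{n}}s\,d\mu=1$ that is so sharply concentrated near $x^*$ that
\[
	\int_{\cube{n}} f\cdot s\,d\mu \;-\; \funcmin \;\le\; C_d\,\|f\|_\infty\,\xi^n_{r+1}/n,
\]
which by feasibility in \eqref{EQ:upbound} gives the claim. The invariance of $\cube{n}$ under coordinate flips $x_i\mapsto 1-x_i$ lets us assume $x^*=\mathbf{0}$; throughout, the characters $\chi_S(x)=\prod_{i\in S}(1-2x_i)$ for $S\subseteq[n]$ form the orthonormal Fourier basis of $L^2(\cube{n},\mu)$.

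For the needle, I would take the polynomial kernel
\[
    \nu_r(x) \;:=\; \Bigl(\sum_{|S|\le r} c_{|S|}\,\chi_S(x)\Bigr)^{\!2},
\]
with coefficients $c_{|S|}$ depending only on the cardinality $|S|$. Such a $\nu_r$ is invariant under the action of $\Sym_n$ fixing $\mathbf{0}$ and therefore depends on $x$ only through the Hamming weight $|x|$; its profile is governed by a univariate polynomial of degree $2r$ orthogonal with respect to the pushforward of $\mu$ along $x\mapsto|x|$, i.e.\ by the Krawtchouk polynomials $K^n_k$. A Christoffel--Darboux-type choice of $c_k$ built from $K^n_{r+1}$---whose least zero is $\xi^n_{r+1}$---makes the ratio $\nu_r(x)/\nu_r(\mathbf{0})$ concentrate on $\{|x|\lesssim\xi^n_{r+1}\}$. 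The hypothesis $(r+1)/n\le 1/2$ keeps the construction non-degenerate, in particular ensuring that $\xi^n_{r+1}$ lies strictly below $n/2$.

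To control the error, set $s:=\nu_r/\!\int\nu_r\,d\mu\in\Sigma_r$, so that by $\Sym_n$-symmetry its Fourier coefficients satisfy $\widehat{s}(S)=\alpha_{|S|}$ with $\alpha_0=1$. Expanding $f=\sum_{|S|\le d}\widehat{f}(S)\chi_S$ and using $\chi_S(\mathbf{0})=1$ together with Parseval gives
\[
    \int f\,s\,d\mu-\funcmin \;=\; \sum_{k=1}^{d}(\alpha_k-1)\!\!\sum_{|S|=k}\!\!\widehat{f}(S).
\]
Each $\alpha_k$ equals $\mathbb{E}_s[K^n_k(|x|)]/K^n_k(0)$, an explicit weighted ratio of Krawtchouk evaluations; together with Cauchy--Schwarz on the inner sum this reduces the error to estimating $|1-\alpha_k|$ for each fixed $k\le d$. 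The three-term recursion for $K^n_k$ combined with the location of the extremal root $\xi^n_{r+1}$ yields $|1-\alpha_k|=O_k(\xi^n_{r+1}/n)$, and summing over $k\le d$ absorbs the $d$-dependent constants into a single $C_d$.

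The main obstacle is precisely the sharp estimate $|1-\alpha_k|=O_k(\xi^n_{r+1}/n)$ for each $k \le d$: this is where the extremal-root property of the Krawtchouk polynomials must be used essentially, since a crude concentration estimate on $s$ (e.g.\ via Markov's inequality in Hamming distance) would lose polynomial factors in $n$. By contrast, verifying that $\nu_r$ is a genuine SOS polynomial of degree $\le 2r$ on $\cube{n}$ is built into the construction, as it is manifestly a square in the character basis.
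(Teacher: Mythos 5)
Your plan is the right shape—symmetrize, reduce to a weight--$|x|$ density, and push the Krawtchouk extremal-root bound through the inner hierarchy—and your identity
\begin{equation*}
\int f\,s\,d\mu - \funcmin \;=\; \sum_{k=1}^d (\alpha_k - 1)\sum_{|S|=k}\widehat f(S)
\end{equation*}
is correct. But there is a genuine gap at the step ``together with Cauchy--Schwarz on the inner sum.'' Cauchy--Schwarz gives
\begin{equation*}
\Bigl|\sum_{|S|=k}\widehat f(S)\Bigr|\;\le\;\binom{n}{k}^{1/2}\Bigl(\sum_{|S|=k}\widehat f(S)^2\Bigr)^{1/2}\;=\;\binom{n}{k}^{1/2}\,\|p_k\|_2\;\le\;\binom{n}{k}^{1/2}\,\|f\|_\infty,
\end{equation*}
which blows up polynomially in $n$ and cannot be absorbed into a constant $C_d$. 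What you actually need is an $n$-independent bound on the harmonic component evaluated at the minimizer: $\sum_{|S|=k}\widehat f(S)=p_k(\mathbf 0)$ (since $\chi_S(\mathbf 0)=1$), hence $|\sum_{|S|=k}\widehat f(S)|\le \|p_k\|_\infty$, and the content of the paper's Lemma~\ref{LEM:cube} is precisely that $\|p_k\|_\infty\le\gamma_d\|f\|_\infty$ with $\gamma_d$ depending only on $d$. This is not an elementary Parseval-type step: its proof (Section~\ref{APP:harmcompbound}) passes to a limiting linear program and invokes V.~Markov's extremal theorem on Chebyshev coefficients. Without it your argument loses the dimension-free constant and the theorem fails.

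Two further remarks. First, your asserted estimate $|1-\alpha_k|=O_k(\xi^n_{r+1}/n)$ is true and its proof is essentially the combination you sketch: the Krawtchouk Lipschitz bound $1-\krawnorm_k(t)\le \tfrac{2k}{n}t$ (Lemma~\ref{LEM:krawabsdist}), plus the fact that the optimal Christoffel--Darboux density of degree $2r$ has first moment $\int t\,s\,d\omega=\xi^n_{r+1}$ (Theorem~\ref{THM:deKlerkLaurentinner}). The paper avoids constructing the CD needle explicitly by quoting that last identity as a black box from \cite{deKlerkLaurent2018}; your route would need to reprove it. Second, once the harmonic bound is in place, your per-mode estimate $|1-\alpha_k|\cdot|\sum_{|S|=k}\widehat f(S)|\le \gamma_d\cdot\tfrac{2k}{n}\xi^n_{r+1}\|f\|_\infty$ summed over $k\le d$ reproduces exactly the paper's $C_d=d(d+1)\gamma_d$, so with the gap filled the two proofs converge to the same constant.
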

So we have  the same estimate of the relative error for the upper bounds $\upbound{r}$ as for the lower bounds $\lowbound{r}$ (up to a constant factor $2$) and indeed we will see that our proof relies on an intimate connection between both hierarchies.
Note that the above analysis of $\upbound{r}$ does not require any condition on the size of $\xi_{r+1}^n$ as was necessary for the analysis of $\lowbound{r}$ in Theorem \ref{THM:main}. Indeed, as will become clear later, the condition put on $\xi_{r+1}^n$ follows from a technical argument (see Lemma \ref{LEM:normbound}), which is not required in the proof of Theorem \ref{THM:mainup}.

\subsection{Asymptotic analysis for both hierarchies}
The results above show that the relative error of both 
hierarchies is bounded  asymptotically by the function $\varphi(t)$ from (\ref{eqphitt}) in the regime $r\approx t\cdot n$. This is summarized in the following corollary, which can be seen as an asymptotic version of Theorem \ref{THM:main} and Theorem \ref{THM:mainup}.

\begin{corollary}\label{cor2}
Fix $d\le n$ and for $n,r\in\N$ write
\begin{align*}
E_{(r)}(n) &:= \sup_{f \in \R[x]_d} \big\{\funcmin - \lowbound{r} : \|f\|_\infty = 1 \big\}, \\ 
E^{(r)}(n) &:= \sup_{f \in \R[x]_d} \big\{ \upbound{r} - \funcmin : \|f\|_\infty = 1 \big\}.
\end{align*}
Let $C_d$ be the constant of Theorem \ref{THM:main} and let  $\varphi(t)$ be the function from (\ref{eqphitt}). Then, for any $t\in [0,1/2]$, we have:
$$
\lim_{r/n\to t} E^{(r)}(n)  \le C_d\cdot \varphi(t)
$$
and, if $d(d+1)\cdot \varphi(t) \le 1/2$, we also have:
$$
\lim_{r/n\to t} E_{(r)}(n)  \le 2\cdot C_d\cdot \varphi(t).
$$
 Here, the  limit notation $r/n\to t$  means that the claimed convergence holds for all sequences $(n_j)_j$ and $(r_j)_j$ of integers such that $\lim_{j\to\infty} n_j=\infty$ and $\lim_{j\to\infty}r_j/n_j=t$. 
\end{corollary}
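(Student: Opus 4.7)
The plan is to feed the non-asymptotic bounds of Theorems \ref{THM:main} and \ref{THM:mainup} into Levenshtein's asymptotic estimate (Theorem \ref{THM:Levenshtein}) and pass to the limit, using monotonicity to keep the bookkeeping clean. Two facts will be used repeatedly: since $\upbound{r}$ is non-increasing and $\lowbound{r}$ is non-decreasing in $r$, both $E^{(r)}(n)$ and $E_{(r)}(n)$ are non-increasing in $r$; and by the classical root-interlacing of orthogonal polynomials, $\xi^n_{r+1}$ is non-increasing in $r$ for each fixed $n$.

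Fix a sequence $(n_j,r_j)$ with $n_j\to\infty$ and $r_j/n_j\to t\in[0,1/2]$. For the upper hierarchy, set $\tilde r_j:=\min(r_j,\lceil n_j/2\rceil-1)$, so that $(\tilde r_j+1)/n_j\le 1/2$ and $\tilde r_j/n_j\to t$. By monotonicity of $E^{(r)}(n)$ and Theorem \ref{THM:mainup},
\[
E^{(r_j)}(n_j)\;\le\;E^{(\tilde r_j)}(n_j)\;\le\;C_d\cdot\xi^{n_j}_{\tilde r_j+1}/n_j,
\]
and Theorem \ref{THM:Levenshtein} bounds the right-hand side by $C_d\bigl[\varphi((\tilde r_j+1)/n_j)+c\,((\tilde r_j+1)/n_j)^{-1/6}n_j^{-2/3}\bigr]$. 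Continuity of $\varphi$ on $[0,1/2]$ sends the main term to $C_d\varphi(t)$, and the error term vanishes: for $t>0$ the prefactor is bounded, while for $t=0$ one has $((\tilde r_j+1)/n_j)^{-1/6}\le n_j^{1/6}$, yielding an $O(n_j^{-1/2})$ bound. This gives the first inequality.

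The lower hierarchy is handled by the same scheme, with the extra task of verifying the hypothesis $d(d+1)\xi^n_{r+1}/n\le 1/2$ of Theorem \ref{THM:main}. When $d(d+1)\varphi(t)<1/2$ is strict, Levenshtein ensures this hypothesis at $\tilde r_j$ for all large $j$, and the same computation delivers $\limsup_j E_{(r_j)}(n_j)\le 2C_d\varphi(t)$. The only delicate point is the boundary case $d(d+1)\varphi(t)=1/2$, which forces $t<1/2$ since $\varphi(1/2)=0$. Here the monotonicities of $E_{(r)}(n)$ and of $\xi^n_{r+1}$ in $r$ point in opposite directions, so no clean monotonic reduction to the strict case is available; I would instead revisit the proof of Theorem \ref{THM:main}, specifically Lemma \ref{LEM:normbound} from which the hypothesis originates, to track a vanishing slack and obtain a modified estimate of the form $E_{(r_j)}(n_j)\le 2C_d\,\xi^{n_j}_{r_j+1}/n_j+o(1)$, after which Levenshtein and continuity of $\varphi$ close the argument. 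This boundary adjustment is the one step I expect to require genuine care; all other parts are a clean synthesis of the cited inputs.
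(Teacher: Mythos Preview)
Your approach is correct and is precisely the intended synthesis: the paper does not give a separate proof of Corollary~\ref{cor2}, presenting it as an immediate consequence of Theorems~\ref{THM:main}, \ref{THM:mainup} and~\ref{THM:Levenshtein}. Your handling of the constraint $(r+1)/n\le 1/2$ via monotonicity in $r$ and the vanishing of Levenshtein's error term (including the $t=0$ case) is more careful than what the paper spells out.

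On the boundary case $d(d+1)\varphi(t)=1/2$: your instinct to go back to Lemma~\ref{LEM:normbound} is exactly right, and the paper in fact hands you the needed tool in its concluding remarks (``Analysis for small values of $r$''), where it notes the sharpening $\nonlinlamb\le\linlamb/(1-\linlamb)$ valid whenever $\linlamb<1$. With this in place, writing $a_j:=d(d+1)\,\xi^{n_j}_{\tilde r_j+1}/n_j$, one gets $E_{(\tilde r_j)}(n_j)\le \gamma_d\,a_j/(1-a_j)$ for all large $j$ (since $\limsup a_j\le 1/2<1$ by Levenshtein), and then $\limsup_j E_{(r_j)}(n_j)\le \gamma_d\cdot\tfrac{1/2}{1-1/2}=\gamma_d=2C_d\varphi(t)$, as required. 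So the ``genuine care'' you anticipated amounts to quoting this one-line refinement.
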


We close with some remarks. First, note that $\varphi(1/2) = 0$. Hence  Corollary \ref{cor2} tells us that the relative error of both 
hierarchies tends to $0$ as $r / n \to  1/2$. We thus `asymptotically' recover the exactness result \eqref{EQ:exactbound} of \cite{SakaueTakedaKim2017}.

Our results in Theorems \ref{THM:main} and \ref{THM:mainup} and Corollary \ref{cor2} extend directly to the case of polynomial optimization over the discrete cube $\{\pm 1\}^n$ instead of the boolean cube $\cube{n}=\{0,1\}^n$, as can easily be seen by applying a change of variables $x\in \{0,1\}\mapsto 2x-1\in \{\pm 1\}$. 
In addition, as we show in Appendix \ref{APP:qary}, our results extend to the case of polynomial optimization over the $q$-ary cube $\{0,1,\ldots,q-1\}^n$ for $q > 2$.

After replacing $f$ by its negation $-f$, one may use the \emph{lower} bound $\lowbound{r}$ on $\funcmin$ in order to obtain an \emph{upper} bound on the \emph{maximum} $f_{\max}$ of $f$ over $\cube{n}$. Similarly, one may obtain a \emph{lower} bound on $f_{\max}$ using the \emph{upper} bound $\upbound{r}$ on $\funcmin$. To avoid possible confusion, we will also refer to $\lowbound{r}$ as the \emph{outer} Lasserre hierarchy (or simply the sum-of-squares hierarchy), whereas we will refer to $\upbound{r}$ as the \emph{inner} Lasserre hierarchy. This terminology  (borrowed from \cite{deKlerkLaurentSurvey}) is motivated by the following observations.
As is well-known (and easy to see) the parameter $f_{\min}$ can be reformulated as an optimization problem over the set $\mathcal M$ of Borel measures on $\cube{n}$:
$$f_{\min}=\min\Big\{\int_{\cube{n}} f(x)d\nu(x): \nu\in \mathcal M, \int_{\cube{n}} d\nu(x)=1\Big\}.$$
If we replace the set $\mathcal M$ by its {\em inner} approximation consisting of all measures $\nu(x)=s(x)d\mu(x)$ with polynomial density  $s\in\Sigma_r$ with respect to a given fixed measure $\mu$, then we obtain the bound $f^{(r)}$.
On the other hand, any $\nu\in\mathcal M$  corresponds to a  linear functional $L_\nu:p\in \mathbb R[x]_{2r}\mapsto \int_{\cube{n}}p(x)d\nu(x)$ which is nonnegative on sums-of-squares on $\cube{n}$. 
These linear functionals thus provide an {\em outer} approximation for $\mathcal M$ and  maximizing $L_\nu(p)$ over it gives the bound $f_{(r)}$ (in dual formulation).

\subsection{Related work}
As mentioned above, the bounds $\lowbound{r}$ defined in \eqref{EQ:lowbound} are known to be exact when $2r \geq n+d -1$. The case $d=2$ (which includes \textsc{max-cut}) was treated in \cite{FawziSaundersonParillo2016}, positively answering a question posed in \cite{Laurent2003}. Extending the strategy of \cite{FawziSaundersonParillo2016}, the general case was settled in \cite{SakaueTakedaKim2017}. These exactness results are best possible when $d$ is even and $n$ is odd \cite{KurpiszLeppanenMastrolilli}.

In \cite{KarlinMathieuNguyen2011}, the sum-of-squares hierarchy is considered for approximating instances of \textsc{knapsack}. This can be seen as a variation on the problem \eqref{EQ:mainprob}, restricting to a linear polynomial objective with positive coefficients, but introducing a single, linear constraint, of the form $a_1x_1+\ldots+a_nx_n\le b$ with $a_i>0$. There, the authors show that the outer hierarchy has relative error at most $1 / (r - 1)$ for any integer $r\ge 2$. 
To the best of our knowledge this is the only known case where one can analyze the quality of the outer bounds for {\em all} orders $r\le n$.

For optimization over sets other than the boolean cube, the following results on the quality of the outer hierarchy $\lowbound{r}$ are available. When considering general semi-algebraic sets (satisfying a compactness condition), it has been shown in  \cite{NieSchweighofer2007} that there exists a constant $c>0$ (depending on the semi-algebraic set) such that $\lowbound{r}$ converges to $\funcmin$ at a rate in $O(1 / \log(r/c)^{1/c})$ as $r$ tends to $\infty$. This rate can be improved  to $O(1 / r^{1/c})$ if one considers a variation of the sum-of-squares hierarchy which is stronger (based on the preordering instead of the quadratic module), but much more computationally intensive \cite{Schweighofer2004}.  
Specializing to the hypersphere $S^{n-1}$, better rates in $O(1/r)$ were shown in \cite{Reznick1995,Doherty_Wehner2012}, and recently improved to $O(1/r^2)$ in \cite{FangFawzi2019}. Similar improved results exist also for the case of polynomial optimization on the simplex and the continuous hypercube $[-1,1]^n$; we refer, e.g.,  to \cite{deKlerkLaurentSurvey} for an overview.


{
The results for semi-algebraic sets other than $\cube{n}$ mentioned above  all apply in the asymptotic regime where the dimension $n$ is fixed and $r \to \infty$. This makes it difficult to compare them directly to our new results. Indeed, we have to consider a different regime in the case of the boolean cube $\cube{n}$, as the hierarchy always converges in at most $n$ steps. The regime where we are able to provide an analysis in this paper is when $r \approx t\cdot n$ with $0<t\le 1/2$.
}

Turning now to the \emph{inner} hierarchy \eqref{EQ:upbound}, as far as we are aware, nothing is known about the behaviour of the bounds $\upbound{r}$ on $\cube{n}$. For full-dimensional compact sets, however, results are available. It has been shown that, on  the hypersphere \cite{deKlerkLaurent2019}, the unit ball and the simplex \cite{SlotLaurent2019}, and the unit box \cite{deKlerkLaurent2018}, the bound $\upbound{r}$ converges at a rate in $O(1/r^2)$. A slightly weaker convergence rate in $O(\log^2 r / r^2)$ is known for general (full-dimensional) semi-algebraic sets \cite{SlotLaurent2019,SlotLaurent2020}. Again, these results are all asymptotic in $r$, and thus hard to compare directly to our analysis on $\cube{n}$.

\subsection{Overview of the proof}
\label{SEC:overview}

Here, we give an  overview of the main ideas that we use to show our results. Our broad strategy follows the one employed in \cite{FangFawzi2019} to obtain information on the sum-of-squares hierarchy on the hypersphere. The following four ingredients will play a key role in our proof:
\begin{enumerate}
\item we use the {\em polynomial kernel technique} in order to produce low-degree sum-of-squares representations of polynomials that are positive over $\cube{n}$, thus allowing an analysis of $\funcmin - \lowbound{r}$;
\item  using classical \emph{Fourier analysis} on the boolean cube $\cube{n}$ we are able to exploit symmetry and  reduce the search for a \emph{multivariate kernel}  to a \emph{univariate sum-of-squares polynomial} on the discrete set $[0:n] := \{0,1,\ldots,n\}$;
\item we find this univariate sum-of-squares by applying the {\em inner} Lasserre hierarchy to an appropriate univariate optimization problem on $[0:n]$;
\item finally, we exploit a known connection between the inner hierarchy and the \emph{extremal roots of corresponding orthogonal polynomials} (in our case, the Krawtchouk polynomials).
\end{enumerate}
Following these steps we are able to analyze the sum-of-squares hierarchy $\lowbound{r}$  as well as the inner hierarchy  $\upbound{r}$. We  now sketch  how our proof articulates along these four main steps.

\smallskip
Let $f\in \mathbb R[x]_d$ be the polynomial with degree $d$ for which we wish to analyze the bounds $f_{(r)}$ and $f^{(r)}$. After rescaling, and up to a change of coordinates, we may assume w.l.o.g. that $f$ attains its minimum over $\cube{n}$  at $0 \in \cube{n}$ and that $f_{\min}=0$ and $f_{\max} = 1$. So we have $\|f\|_\infty=1$. To simplify notation, we will make these assumptions throughout.

The first key idea is to consider a \emph{polynomial kernel} $\kernel$ on $\cube{n}$ of the form:
\begin{equation}
\label{EQ:kernelform}
\kernel(x, y) = u^2(d(x,y)),
\end{equation}
where $u \in \R[t]_r$ is a univariate polynomial of degree at most $r$ and $d(x, y)$ is the Hamming distance between $x$ and $y$. 
Such a kernel $K$ induces an operator $\kernelop$, which acts linearly on the space of polynomials on $\cube{n}$ by:
\begin{equation*}
p\in \mathbb R[x] \mapsto 	\kernelop p(x) := \int_{\cube{n}} p(y) \kernel(x,y) d\mu(y) =  {1\over 2^n} \sum_{y \in \cube{n}} p(y) \kernel(x, y).
\end{equation*}
Recall that $\mu$ is the uniform probability distribution on $\cube{n}$. An easy but important observation is that, if $p$ is nonnegative on $\cube{n}$, then $\kernelop p$ is a sum-of-squares (on $\cube{n}$) of degree at most $2r$. 
We use this fact  as follows.

Given a scalar $\delta \geq 0$, define the polynomial $\tilde f := f + \delta$. Assuming that the operator $\kernelop$ is non-singular, we can express $\tilde f$ as $\tilde f = \kernelop (\kernelop^{-1} \tilde f)$. Therefore, if $\kernelop^{-1} \tilde f$ is nonnegative on $\cube{n}$, then we find that $\tilde f$ is a sum-of-squares on $\cube{n}$ with degree at most $2r$, which implies   $\funcmin - \lowbound{r} \leq \delta$.

One way to guarantee that $\kernelop^{-1} \tilde f$ is indeed nonnegative on $\cube{n}$  is to select the operator $\kernelop$ in such a way that $\kernelop (1)=1$ and 
\begin{equation}
	\label{EQ:overview1}
\|\kernelop^{-1}-I\| := \sup_{p \in \R[x]_d} \frac{\| \kernelop^{-1} p - p \|_\infty}{\|p\|_\infty} \leq \delta.
\end{equation}
We collect this as a lemma for further reference.

\begin{lemma}\label{lemK0}
If the kernel operator $\kernelop$ associated to $u \in \R[t]_r$ via relation (\ref{EQ:kernelform}) satisfies
$\kernelop (1)=1$ and $\|\kernelop ^{-1}-I\|\le \delta$, then we have $f_{\min}-f_{(r)}\le \delta$.
\end{lemma}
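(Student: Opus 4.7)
The plan is to follow directly the strategy outlined in Section \ref{SEC:overview}: exhibit $\tilde{f} := f + \delta$ as a sum of squares of degree at most $2r$ on $\cube{n}$, which by definition \eqref{EQ:lowbound} applied with $\lambda = -\delta$ yields $\lowbound{r} \geq -\delta$; since the normalization gives $f_{\min} = 0$, this is precisely $f_{\min} - \lowbound{r} \leq \delta$. The argument splits into verifying two intermediate claims: (i) the polynomial $\kernelop^{-1}\tilde{f}$ is nonnegative on $\cube{n}$; and (ii) whenever a polynomial $p$ is nonnegative on $\cube{n}$, its image $\kernelop p$ is a sum of squares of degree at most $2r$ on $\cube{n}$. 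Combining (i) and (ii) with the identity $\tilde{f} = \kernelop(\kernelop^{-1}\tilde{f})$ then completes the proof.

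For claim (i), I would exploit the normalization $\kernelop(1) = 1$, which by linearity yields $\kernelop^{-1}(1) = 1$, and hence
$$\kernelop^{-1}\tilde{f} \;=\; \kernelop^{-1}f + \delta \;=\; f + (\kernelop^{-1} - I)f + \delta.$$
The operator-norm hypothesis $\|\kernelop^{-1} - I\| \leq \delta$ together with $\|f\|_\infty = 1$ gives $\|(\kernelop^{-1} - I)f\|_\infty \leq \delta$, so that $\kernelop^{-1}\tilde{f}(x) \geq f(x) - \delta + \delta = f(x) \geq 0$ for every $x \in \cube{n}$.

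For claim (ii), I would unfold the definition of $\kernelop$: for any fixed $y \in \cube{n}$, the Hamming distance admits on $\cube{n}$ the affine polynomial representation
$$d(x, y) \;=\; \sum_{i:\, y_i = 0} x_i \;+\; \sum_{i:\, y_i = 1}(1 - x_i),$$
so that $u(d(\cdot, y))$ is an element of $\R[x]_r$ modulo the ideal generated by the polynomials $x_i - x_i^2$, and $u^2(d(\cdot, y))$ is therefore a single square in $\Sigma_r$. Since
$$\kernelop p(x) \;=\; \frac{1}{2^n}\sum_{y \in \cube{n}} p(y) \cdot u^2(d(x, y))$$
is a nonnegatively weighted sum of such squares whenever $p \geq 0$ on $\cube{n}$, it is a sum of squares of degree at most $2r$ on $\cube{n}$.

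No serious obstacle is expected: the argument amounts to a careful unpacking of the definitions. The mildly subtle point is the cancellation $(\kernelop^{-1} - I)(1) = 0$ used in claim (i), which is what allows the uniform operator bound to be applied to $f$ rather than to $\tilde{f}$, and thereby avoids a spurious term of size $\delta^2$ that would otherwise arise from using $\|\tilde{f}\|_\infty = 1 + \delta$. The genuine difficulty of the main theorem is postponed to the subsequent construction of a univariate polynomial $u$ of degree $r$ for which the associated $\delta$ can be made as small as the Krawtchouk-root estimate requires.
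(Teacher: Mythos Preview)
Your proof is correct and follows essentially the same approach as the paper: both use $\kernelop(1)=1$ to reduce $\kernelop^{-1}\tilde f - \tilde f$ to $(\kernelop^{-1}-I)f$, apply the operator-norm bound together with $\|f\|_\infty=1$ to get $\kernelop^{-1}\tilde f \ge f \ge 0$, and conclude that $\tilde f = \kernelop(\kernelop^{-1}\tilde f)$ is a sum of squares of degree at most $2r$. Your write-up is slightly more self-contained in that you spell out claim~(ii) explicitly, whereas the paper records that observation in the paragraph preceding the lemma.
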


\begin{proof}
With $\tilde f=f+\delta$, we have:
$\|\kernelop^{-1}\tilde f-\tilde f\|_\infty=\|\kernelop ^{-1}f -f\|_\infty \le \delta\|f\|_\infty=\delta$, where we use the fact that $\kernelop (1) = 1 = \kernelop^{-1} (1)$ and thus $\kernelop^{-1}\tilde f-\tilde f =\kernelop ^{-1}f -f$  for the first equality and (\ref{EQ:overview1}) for the inequality.
The inequality $\|\kernelop^{-1}\tilde f-\tilde f\|_\infty\le\delta$ then implies  $\kernelop^{-1}\tilde f(x)\ge \tilde f(x)-\delta=f(x)\ge f_{\min}=0$ on $\cube{n}$. \qedMP
\end{proof}

In other words, we want the operator $\kernelop^{-1}$ (and thus $\kernelop$) to be `close to the identity operator' in a certain sense. 
As kernels of the form \eqref{EQ:kernelform} are invariant under the symmetries of $\cube{n}$, we are able to use classical Fourier analysis on the boolean cube to express the eigenvalues $\lambda_i$ of $\kernelop$ in terms of the polynomial $u$. More precisely, it turns out that these eigenvalues are given by the coefficients of the expansion of $u^2$ in the basis of \emph{Krawtchouk polynomials}. As we show later (see \eqref{EQ:nonlinlamb}), inequality \eqref{EQ:overview1} then holds for $\delta \approx \sum_{i} |\lambda_i^{-1} - 1|$.

It therefore remains to find a univariate polynomial $u \in \R[t]_r$ for which these coefficients, and thus the eigenvalues of $\kernelop$, are sufficiently close to $1$. Interestingly, this problem boils down to analyzing the quality of the inner bound $g^{(r)}$ (see \eqref{EQ:upbound}) for a particular univariate polynomial $g$ (given   in (\ref{EQ:qopt})).

In order to perform this analysis and conclude the proof of Theorem \ref{THM:main}, we make use of a connection between the inner Lasserre hierarchy and the least roots of orthogonal polynomials (in this case the Krawtchouk polynomials).

Finally, we generalize our analysis of the inner hierarchy (for the special case of  the selected polynomial $g$ in Step 3 to arbitrary polynomials)  to obtain Theorem \ref{THM:mainup}.

\subsection{Matrix-valued polynomials}
As we explain in this section the results of Theorem \ref{THM:main} and Theorem \ref{THM:mainup} may be extended to the setting of matrix-valued polynomials.

For fixed $k \in \N$, let $\matspace \subseteq \R^{k \times k}$ be the space of $k \times k$ real symmetric matrices.  We write $\matspace[x] \subseteq \R^{k \times k}[x]$ for the space of $n$-variate polynomials whose coefficients lie in $\matspace$, i.e., the set of $k\times k$ symmetric polynomial matrices.  The polynomial optimization problem \eqref{EQ:mainprob} may be generalized to polynomials $F \in \matspace[x]$ as:
\begin{equation}
	F_{\min} := \min_{x \in \cube{n}} \lambda_{\min}(F(x)),
\end{equation}
where $F\in \matspace[x]$ is a polynomial matrix and $\lambda_{\min}(F(x))$ denotes the smallest eigenvalue of $F(x)$. That is, $F_{\min}$ is the largest value for which $F(x) - F_{\min} I~{ \succeq 0}$ for all $x \in \cube{n}$, where $\succeq$ is the  positive semidefinite L\"owner order.

The Lasserre hierarchies \eqref{EQ:lowbound} and \eqref{EQ:upbound} may also be defined in this setting. A polynomial matrix $S \in \matspace[x]$ is called a sum-of-squares polynomial  of degree $2r$ if it can be written as:
\[
	S(x) = \sum_{i} U_i(x) U_i(x)^\top \quad (U_i \in \R^{k \times m}[x], ~~{\rm \deg} ~U_i \leq r, ~~ m\in\N).
\]
We write $\Sigma^{k \times k}_r$ for the set of all such polynomials. See, e.g., \cite{SchererHol} for background and results on sum-of-squares polynomial matrices.
Note that a sum-of-squares polynomial matrix $S \in \Sigma^{k \times k}_r$ satisfies $S(x) \succeq 0$ for all $x \in \cube{n}$. 
For a matrix $A \in \matspace$,  $\|A\|$ denotes its spectral (or operator) norm, defined as the largest absolute value of an eigenvalue of $A$. Then, for $F\in \matspace[x]$, we set 
$$\|F\|_\infty = \max_{x\in \cube{n}} \|F(x)\|.$$

The outer hierarchy $F_{(r)}$ for the polynomial $F$ is then given by:
\begin{equation}
    F_{(r)} := \sup_{\lambda \in \R} \left\{F(x) - \lambda \cdot I = S(x) \text{ on } \cube{n} \text{ for some } S \in \Sigma^{k \times k}_r \right\}
\end{equation}
Similarly, the inner hierarchy $F^{(r)}$ is defined as:
\begin{equation}
	 F^{(r)} := \inf_{S \in \Sigma^{k \times k}_r} \left \{ \int_{\cube{n}} {\rm Tr} \big( F(x) S(x)\big) d\mu(x) : \int_{\cube{n}} {\rm Tr} \big(S(x)\big) d\mu(x) = 1\right \}.
\end{equation}
Indeed, \eqref{EQ:lowbound} and \eqref{EQ:upbound} are the special cases of the above programs when $k=1$.
As before, the parameters $F_{(r)}$ and $F^{(r)}$ may be computed efficiently for fixed $r$ (and fixed $k$) using semidefinite programming, and they satisfy \[F_{(r)} \leq F_{(r+1)} \leq F_{\min} \leq F^{(r+1)} \leq F^{(r)}.\]

As was already noted in \cite{FangFawzi2019} (in the context of optimization over the unit sphere), the proof strategy outlined in Section~\ref{SEC:overview} naturally extends to the setting of  polynomial matrices. This yields the following generalizations of our main theorems.
\begin{theorem}
\label{THM:mainmatrix}
\sloppy
Fix $d \leq n$ and let $F \in \matspace[x]$ ($k\ge 1$) be a  polynomial matrix of degree $d$. For $r, n \in \N$, let $\xi^n_{r}$ be the least root of the degree $r$ Krawtchouk polynomial \eqref{EQ:krawdef} with parameter $n$.
Then, if $(r+1)/n \leq 1/2$ and ${d(d+1) \cdot \xi_{r+1}^n / n \leq 1/2}$, we have:
\begin{align}
	\frac{F_{\min} - F_{(r)} }{\|F\|_{\infty}} \leq 2 C_d \cdot \xi_{r+1}^n / n,
\end{align}
where  $C_d > 0$ is the absolute  constant depending only on $d$ from  Theorem \ref{THM:main}.
\end{theorem}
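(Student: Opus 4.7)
The plan is to adapt the four-step strategy outlined in Section~\ref{SEC:overview} to the matrix-valued setting, exploiting the fact that the polynomial kernel $\kernel(x,y) = u^2(d(x,y))$ remains scalar-valued and therefore acts entry\-wise on $F \in \matspace[x]$, commuting in particular with the linear functionals $M \mapsto v^\top M v$ for any $v \in \R^k$. As in the scalar case, after replacing $F$ by $(F - F_{\min} I)/\lambda$ for a suitable $\lambda > 0$ we may assume $F_{\min} = 0$ and $\|F\|_\infty = 1$, which only costs a universal constant factor that can be absorbed into $C_d$.

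The first step is to extend the SOS-preserving property of the kernel operator $\kernelop$ to polynomial matrices: if $P \in \matspace[x]$ satisfies $P(x) \succeq 0$ for every $x \in \cube{n}$, then $\kernelop P$ lies in $\Sigma^{k \times k}_r$. To see this, use an eigendecomposition $P(y) = \sum_i v_i(y) v_i(y)^\top$ for each $y \in \cube{n}$ to write
\[
\kernelop P(x) \;=\; \frac{1}{2^n} \sum_{y \in \cube{n}} \sum_i \bigl(u(d(x,y))\, v_i(y)\bigr) \bigl(u(d(x,y))\, v_i(y)\bigr)^\top.
\]
Since $u(d(x,y))$ has degree at most $r$ in $x$, each vector $u(d(x,y)) v_i(y)$ is a vector-valued polynomial in $x$ of degree at most $r$, so the right-hand side is a sum-of-squares polynomial matrix of degree at most $2r$.

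The second step is the matrix analogue of Lemma~\ref{lemK0}. Since $\kernelop$ acts entry\-wise, the scalar normalization $\kernelop(1) = 1$ gives $\kernelop(I) = I$, whence $\kernelop^{-1}(\tilde F) = \kernelop^{-1}(F) + \delta I$ for $\tilde F := F + \delta I$. If we can arrange $\|\kernelop^{-1} F - F\|_\infty \le \delta$ in spectral norm, then $\kernelop^{-1} \tilde F(x) \succeq F(x) - \delta I + \delta I = F(x) \succeq 0$ on $\cube{n}$; applying $\kernelop$ and invoking the first step yields $\tilde F \in \Sigma^{k \times k}_r$ on $\cube{n}$, so that $F_{\min} - F_{(r)} \le \delta$. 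To control this matrix operator norm, fix a unit vector $v \in \R^k$ and observe that $p_v(x) := v^\top F(x) v \in \R[x]_d$ satisfies $|p_v(x)| \le \|F(x)\| \le \|F\|_\infty$, while entry\-wise action of $\kernelop$ gives
\[
v^\top (\kernelop^{-1} F - F)(x)\, v \;=\; (\kernelop^{-1} p_v - p_v)(x).
\]
Taking suprema over $x \in \cube{n}$ and over unit $v \in \R^k$, and using $\|A\| = \sup_{\|v\|=1} |v^\top A v|$ for symmetric $A$, the scalar operator-norm estimate from \eqref{EQ:overview1} transfers verbatim.

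With both reductions in hand, Theorem~\ref{THM:mainmatrix} follows without any further work on Krawtchouk polynomials: we take the very same univariate $u \in \R[t]_r$ produced in the proof of Theorem~\ref{THM:main} (built via the inner Lasserre hierarchy on $[0:n]$ and the connection to the least root $\xi^n_{r+1}$), and the resulting scalar bound of order $2 C_d \cdot \xi^n_{r+1}/n$ on $\|\kernelop^{-1} - I\|_d$, valid under the stated hypotheses $(r+1)/n \le 1/2$ and $d(d+1) \xi^n_{r+1}/n \le 1/2$, lifts to the matrix setting through the two reductions above. The main obstacle is precisely those two reductions—the SOS-preservation property for PSD polynomial matrices and the passage from the spectral-norm matrix operator norm to its scalar counterpart via the quadratic forms $v^\top F(\cdot) v$; once these are secured, the Fourier-analytic analysis of the eigenvalues of $\kernelop$ in terms of Krawtchouk coefficients and the Levenshtein estimate on $\xi^n_{r+1}$ apply without modification.
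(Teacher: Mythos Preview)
Your proposal is correct and follows essentially the same approach as the paper's proof in Appendix~\ref{APP:matrixvalued}: both establish that $\kernelop$ sends PSD polynomial matrices to elements of $\Sigma^{k\times k}_r$ via the square-root/eigendecomposition of $P(y)$, and both reduce the matrix operator-norm bound to the scalar one through the quadratic forms $p_v(x)=v^\top F(x)v$. The only organizational difference is that the paper packages the quadratic-form reduction as a separate matrix analogue of Lemma~\ref{LEM:cube} (Lemma~\ref{LEM:harmboundmatrix}, bounding $\|P_i\|_\infty\le\gamma_d\|P\|_\infty$) and then repeats the Funk--Hecke eigenvalue estimate, whereas you apply the already-established scalar bound on $\|\kernelop^{-1}-I\|$ directly to each $p_v$; the two routes are equivalent and yield the same constant.
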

\begin{theorem}
\label{THM:mainupmatrix}
Fix $d \leq n$ and let $F \in \matspace[x]$ be a matrix-valued polynomial of degree $d$.
Then, for any $r, n \in \N$ with $(r+1)/n \leq 1/2$, we have:
\begin{align*}
	\frac{F^{(r)} - F_{\min} }{\|F\|_{\infty}} \leq C_d \cdot \xi_{r+1}^n / n,
\end{align*}
where $C_d > 0$ is the constant of Theorem \ref{THM:main}.
\end{theorem}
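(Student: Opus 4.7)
The plan is to reduce the matrix-valued upper-bound statement directly to its scalar counterpart (Theorem~\ref{THM:mainup}) by means of a rank-one ansatz. The key observation is that, unlike the outer hierarchy $F_{(r)}$, which requires producing a matrix-valued sum-of-squares certificate for $F - F_{\min}\cdot I$, the inner hierarchy $F^{(r)}$ can be certified by a single rank-one test matrix pointing along the eigenvector realizing $F_{\min}$.

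Concretely, since $\cube{n}$ is finite, I would fix $x^*\in\cube{n}$ achieving $F_{\min}=\lambda_{\min}(F(x^*))$ and a unit eigenvector $v^*\in\R^k$ of $F(x^*)$ for this eigenvalue, so that $v^{*\top}F(x^*)v^*=F_{\min}$. I then introduce the scalar polynomial
\[
 g(x)\ :=\ v^{*\top}F(x)v^*\ \in\ \R[x]_d .
\]
Two properties matter: first, $g(x^*)=F_{\min}$ and $g(x)\ge \lambda_{\min}(F(x))\ge F_{\min}$ for every $x\in\cube{n}$, so $g_{\min}=F_{\min}$; second, $|g(x)|\le \|F(x)\|\le \|F\|_\infty$, so $\|g\|_\infty\le \|F\|_\infty$. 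Applying the scalar Theorem~\ref{THM:mainup} to $g$ (the hypothesis $(r+1)/n\le 1/2$ is exactly what is required) yields
\[
 g^{(r)} - F_{\min}\ =\ g^{(r)} - g_{\min}\ \le\ C_d\cdot (\xi_{r+1}^n/n)\cdot\|F\|_\infty .
\]

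It then suffices to prove $F^{(r)}\le g^{(r)}$ by lifting a near-optimal scalar certificate. For any $\varepsilon>0$, pick $s\in\Sigma_r$ with $\int_{\cube{n}}s\,d\mu=1$ and $\int_{\cube{n}}g\,s\,d\mu\le g^{(r)}+\varepsilon$, and define the rank-one polynomial matrix
\[
 S(x)\ :=\ s(x)\cdot v^*v^{*\top}.
\]
Writing $s=\sum_i p_i^2$ with $p_i\in\R[x]_r$ gives $S(x)=\sum_i \bigl(p_i(x)v^*\bigr)\bigl(p_i(x)v^*\bigr)^\top$, and since each $p_i(x)v^*$ is a $k\times 1$ polynomial of degree at most $r$ we have $S\in\Sigma_r^{k\times k}$. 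Using $\mathrm{Tr}(v^*v^{*\top})=1$ and cyclicity of the trace, one checks $\int \mathrm{Tr}(S)\,d\mu=1$ and $\int \mathrm{Tr}(F\cdot S)\,d\mu=\int g\cdot s\,d\mu\le g^{(r)}+\varepsilon$. Hence $F^{(r)}\le g^{(r)}+\varepsilon$, and letting $\varepsilon\to 0$ combined with the previous display yields the claim.

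Since the reduction is essentially formal, there is no genuine technical obstacle; the only point worth verifying is that the ansatz $S=s\cdot v^*v^{*\top}$ stays within the correct degree class, which is immediate because $\deg(p_iv^*)=\deg p_i\le r$. In particular, none of the Fourier/Krawtchouk machinery has to be re-derived in the matrix setting for the upper bound — it is entirely inherited from the scalar case through a single rank-one lifting.
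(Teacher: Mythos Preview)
Your argument is correct. The rank-one ansatz $S(x)=s(x)\,v^*v^{*\top}$ is feasible for \eqref{EQ:APPmatinner} with the right degree, and the two verifications $g_{\min}=F_{\min}$ and $\|g\|_\infty\le\|F\|_\infty$ are exactly what is needed to invoke Theorem~\ref{THM:mainup} as a black box.

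It is, however, a different route from the paper's. The paper mirrors the scalar proof structure in the matrix setting: it first symmetrizes $F$ under $\stab(0)$ to obtain a univariate matrix-valued polynomial $\widehat F(t)$, then proves a matrix linear upper estimator $\widehat F(t)\preceq d(d+1)\gamma_d\,(t/n)\,I+C$ using the matrix harmonic-component bound (Lemma~\ref{LEM:harmboundmatrix}), and only at the very end applies a rank-one reduction along the zero-eigenvector of $C=\widehat F(0)$ to land in the scalar univariate problem. Your approach performs the rank-one projection \emph{first}, collapsing immediately to a scalar polynomial $g$ on $\cube{n}$ and outsourcing all the Fourier/Krawtchouk work to Theorem~\ref{THM:mainup}. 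This is shorter and avoids re-proving the matrix analogues of Lemmas~\ref{LEM:cube} and~\ref{LEM:Flinearupperestimator} for the inner hierarchy. The paper's route, on the other hand, keeps the inner and outer matrix proofs structurally parallel and makes explicit that the same symmetrization machinery works uniformly; it also isolates the matrix upper-estimator lemma, which could be of independent use.
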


\subsection*{Organization}
The rest of the paper is structured as follows. We review the necessary background on Fourier analysis on the boolean cube in Section \ref{SEC:Fourier}. In Section \ref{SEC:innerbound}, we recall a connection between the inner Lasserre hierarchy and the roots of orthogonal polynomials. Then, in Section \ref{SEC:mainproof}, we give a proof of  Theorem \ref{THM:main}. In Section \ref{SEC:generalize}, we discuss how to generalize the proofs of Section \ref{SEC:mainproof} to obtain Theorem \ref{THM:mainup}.
We group the proofs of some  technical results needed to prove Lemma~\ref{LEM:cube}  in Section \ref{APP:harmcompbound}. In Appendix \ref{APP:qary}, we indicate how our arguments extend to the case of polynomial optimization over the $q$-ary hypercube $\{0,1,\ldots,q-1\}^n$ for $q>2$. Finally, we give the proofs of our results in the matrix-valued setting (Theorem \ref{THM:mainmatrix} and Theorem \ref{THM:mainupmatrix}) in Appendix \ref{APP:matrixvalued}.

\section{Fourier analysis on the boolean cube}
\label{SEC:Fourier}
In this section, we cover some standard Fourier analysis on the boolean cube. For a general reference on Fourier analysis on (finite, abelian) groups, see e.g. \cite{Terras1999}. See also Section~4 of \cite{VallentinLN} for the boolean case.

\subsection*{Some notation}
For $n \in \N$, we write $\cube{n} = \{0, 1\}^n$ for the boolean hypercube of dimension $n$.
We let $\mu$ denote  the uniform probability measure on $\cube{n}$, given by $\mu = \frac{1}{2^n} \sum_{x \in \cube{n}}\delta_x$, where $\delta_x$ is the Dirac measure at $x$. Further, we write $|x| = \sum_{i}x_i = |\{i\in [n]: x_i = 1 \}|$ for the \emph{Hamming weight} of $x \in \cube{n}$, and $d(x,y) = |\{i\in [n]: x_i \neq y_i\}|$ for the \emph{Hamming distance} between $x, y \in \cube{n}$. We let $\Sym(n)$ denote the set of permutations of the set $[n]=\{1,\ldots,n\}$.

We consider polynomials $p : \cube{n} \rightarrow \R$ on $\cube{n}$. The space $\redpolys{x}$ of such polynomials is given by the quotient ring of $\R[x]$ over the equivalence relation $p \sim q$ if $ p(x) = q(x)$ on $\cube{n}$. In other words, $\redpolys{x}= \mathbb R[x]/\mathcal I$, where $\mathcal I$ is the ideal generated by the polynomials $x_i-x_i^2$ for $i\in [n]$, which can also be seen as the vector space spanned by the (multilinear) polynomials $\prod_{i\in I}x_i$ for $I\subseteq [n].$

For $a\le  b \in \N$, we let $[a:b]$ denote the set of integers $a, a + 1, \dots, b$.

\subsection*{The character basis}
Let $\langle \cdot, \cdot \rangle_\mu$ %
be the inner product on $\redpolys{x}$ given by:
\[
    \langle p, q \rangle_{\mu} = \int_{\cube{n}} p(x) q(x) d \mu(x) = \frac{1}{2^n} \sum_{x \in \cube{n}} p(x) q(x).
\]
The space $\redpolys{x}$ has an orthonormal basis w.r.t. $\langle \cdot, \cdot \rangle_{\mu}$ given by the \emph{characters}:
\begin{equation}
	\label{EQ:chardef}
    \chi_a(x) := (-1)^{x \cdot a} = \prod_{i:a_i=1}(1-2x_i) \quad \left(a \in \cube{n}\right).
\end{equation}
In other words, the set $\{ \chi_a : a \in \cube{n} \}$ of all characters on $\cube{n}$ forms a basis for $\redpolys{x}$ and 
\begin{equation}
\label{EQ:characterinner}
    \langle \chi_a, \chi_b \rangle_{\mu} = \frac{1}{2^n} \sum_{x \in \cube{n}} \chi_a(x) \chi_b(x) = \delta_{a, b} \quad \forall a, b \in \cube{n}.
\end{equation}
Then any polynomial $p \in \redpolys{x}$ can be expressed in the basis of characters, known as its   \emph{Fourier expansion}:
\begin{equation}
   \label{EQ:Fourierexp}
   p(x) = \sum_{a \in \cube{n}} \widehat{p}(a) \chi_a(x) \quad \forall x \in \cube{n}
\end{equation}
with \emph{Fourier coefficients} $\widehat{p}(a) := \langle p, \chi_a \rangle_{\mu} \in \R$.

The group $\Aut(\cube{n})$ of automorphisms of $\cube{n}$ is generated by the coordinate permutations, of the form   $x\mapsto \sigma(x):=(x_{\sigma(1)},\ldots,x_{\sigma(n)})$ for $\sigma \in \Sym(n)$, 
and the automorphisms corresponding to bit-flips, of the form $x\in \cube{n}\mapsto x\oplus a\in \cube{n}$ for $a\in\cube{n}$.
If we set
\[
    H_k := \spann \{ \chi_a : |a| = k\} \quad (0 \leq k \leq n),
\]
then each $H_k$ is an irreducible, $\Aut(\cube{n})$-invariant subspace of  $\redpolys{x}$ of dimension ${n \choose k}$. Using \eqref{EQ:Fourierexp}, we may then decompose $\redpolys{x}$ as the direct sum
\[
\redpolys{x} = H_0 \perp H_1 \perp \dots \perp H_n,
\]
where the subspaces $H_k$ are pairwise orthogonal w.r.t. $\langle \cdot, \cdot\rangle_\mu$. In fact, we have that $\redpolys{x}_d = H_0 \perp H_1 \perp \dots \perp H_d$ for all $d \leq n$, and we may thus write any $p \in \redpolys{x}_d$ (in a unique way) as 
\begin{equation}\label{eqdecp}
p = p_0 + p_1 + \dots + p_d \quad (p_k \in H_k).
\end{equation}
The polynomials $p_k\in H_k$ ($k=0,\ldots,d$) are known as the {\em harmonic components} of $p$ and the decomposition (\ref{eqdecp}) as the {\em harmonic decomposition} of $p$. We will make extensive use of this decomposition throughout.

Let $\stab(0) \subseteq \Aut(\cube{n})$ be the set of automorphisms fixing $0 \in \cube{n}$, which consists of the coordinate permutations $x\mapsto \sigma(x) =(x_{\sigma(1)},\ldots,x_{\sigma(n)})$ for $\sigma\in \Sym(n)$. The subspace of functions in $H_k$ that are  invariant under $\stab(0)$ is one-dimensional and it is spanned by the function
\begin{equation}
    \label{EQ:charsum}
        \charsum_k(x) := \sum_{|a| = k} \chi_a(x).
\end{equation}
These functions $\charsum_k$ are known as the \emph{zonal spherical functions} with pole ${0 \in \cube{n}}$.

\subsection*{Krawtchouk polynomials}
For $k \in \N$, the \emph{Krawtchouk polynomial}  of degree $k$ (and with parameter $n$) is the univariate polynomial in $t$ given by:
\begin{equation}
\label{EQ:krawdef}
    \kraw_k(t) := \sum_{i=0}^k (-1)^i {t \choose i} {n-t \choose k-i}
\end{equation}
(see, e.g. \cite{Szego1959}). %
The Krawtchouk polynomials form an orthogonal basis for $\R[t]$ with respect to the inner product given by the following discrete probability measure on the set $[0:n]=\{0,1,\ldots,n\}$:
\begin{equation*}
    \omega := \frac{1}{2^n} \sum_{t=0}^n w(t) \delta_t, \text{ with } w(t) := {n \choose t}.
\end{equation*}
Indeed, for all $k, k' \in \N$ we have:
\begin{equation}
\label{EQ:krawortho}
    \langle \kraw_k, \kraw_{k'}\rangle_\omega :={1\over 2^n} \sum_{t=0}^n \kraw_k(t) \kraw_{k'}(t) w(t) = \delta_{k, k'} {n \choose k} .
\end{equation}
The following (well-known) lemma explains the connection between the Krawtchouk polynomials and the character basis on $\redpolys{x}$.  

\begin{lemma}\label{lemxt}
Let $t \in [0:n]$ and choose $x, y \in \cube{n}$ so that $d(x, y) = t$. Then for any $0\le k \leq n$ we have:
\begin{equation}
\label{EQ:krawcharacter}
        \kraw_k(t) = \sum_{|a| = k} \chi_a(x) \chi_a(y).
\end{equation}
In particular, we have:
\begin{equation}\label{EQ:xt}
    \kraw_k(t) = \sum_{|a| = k} \chi_a(1^t 0^{n-t}) = \charsum_k(1^t 0^{n-t}),
\end{equation}
where $1^t 0^{n-t} \in \cube{n}$ is given by $(1^t 0^{n-t})_i = 1$ if $1\le i \leq t$ and $(1^t 0^{n-t})_i = 0$ if $t+1\le i\le n$.
\end{lemma}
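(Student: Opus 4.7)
The plan is to first reduce the two-point sum in \eqref{EQ:krawcharacter} to a one-point sum using the multiplicativity of the characters, and then identify that one-point sum with the defining binomial expression \eqref{EQ:krawdef} of $\kraw_k$ by a direct counting argument.

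For the reduction step, I would start from the observation that
$$\chi_a(x)\chi_a(y) = (-1)^{a\cdot x+a\cdot y} = (-1)^{a\cdot (x\oplus y)} = \chi_a(x\oplus y),$$
where $x\oplus y \in \cube{n}$ is the coordinatewise XOR (i.e.\ addition modulo $2$), since $a\cdot x + a\cdot y$ and $a\cdot (x\oplus y)$ differ by an even integer. Setting $z := x\oplus y$, we have $|z| = d(x,y) = t$, so
$$\sum_{|a|=k} \chi_a(x)\chi_a(y) = \sum_{|a|=k}\chi_a(z).$$
The right-hand side depends only on $|z|$: the set $\{a\in\cube{n}:|a|=k\}$ is invariant under coordinate permutations $\sigma\in\Sym(n)$, and $\chi_a(\sigma(z)) = \chi_{\sigma^{-1}(a)}(z)$, so re-indexing gives equality for any $z'$ with $|z'|=|z|$. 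In particular, taking $z' = 1^t 0^{n-t}$, we conclude
$$\sum_{|a|=k}\chi_a(x)\chi_a(y) = \sum_{|a|=k}\chi_a(1^t 0^{n-t}) = \charsum_k(1^t 0^{n-t}),$$
which already yields the ``in particular'' statement \eqref{EQ:xt}, modulo the identification with $\kraw_k(t)$.

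It remains to evaluate $\sum_{|a|=k}\chi_a(1^t 0^{n-t})$ explicitly. For $a\in\cube{n}$ with $|a|=k$, the character value $\chi_a(1^t 0^{n-t}) = (-1)^{a_1+\cdots+a_t} = (-1)^{i}$ depends only on $i := |\{j\in [t]:a_j=1\}|$. Grouping $a$'s with $|a|=k$ according to the value of $i$, the number of such $a$ with exactly $i$ ones among the first $t$ coordinates and thus $k-i$ ones among the last $n-t$ coordinates is $\binom{t}{i}\binom{n-t}{k-i}$. Summing,
$$\sum_{|a|=k}\chi_a(1^t 0^{n-t}) = \sum_{i=0}^{k}(-1)^{i}\binom{t}{i}\binom{n-t}{k-i} = \kraw_k(t)$$
by the definition \eqref{EQ:krawdef}, which completes the proof.

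There is no real obstacle here; the argument is purely bookkeeping. The only point worth being careful about is the permutation-invariance step, which relies on the fact that the summation set $\{a:|a|=k\}$ is closed under $\Sym(n)$, so reindexing by $a\mapsto\sigma^{-1}(a)$ is a bijection. Everything else is a direct computation.
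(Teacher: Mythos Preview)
Your proof is correct and follows essentially the same approach as the paper: reduce the two-point sum to a one-point sum via $\chi_a(x)\chi_a(y)=\chi_a(x\oplus y)$, then evaluate by counting the $a$'s according to how many ones fall in the support of $x\oplus y$. The only cosmetic difference is that you insert an explicit permutation-invariance step to pass to the canonical point $1^t 0^{n-t}$ before counting, whereas the paper counts directly on $x+y$; either way the computation is identical.
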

\begin{proof}
Noting that $\chi_a(x) \chi_a(y) = \chi_a(x+y)$ and  $|x + y| = d(x,y) = t$, we have:
\begin{align*}
	\sum_{|a| = k} \chi_a(x) \chi_a(y) &= \sum_{i = 0}^k (-1)^i \cdot \# \{|a| = k : a \cdot (x+y) = i\} \\
	&= \sum_{i = 0}^k (-1)^i {t \choose i} {n - t \choose k - i} = \kraw_k(t).
\end{align*} \qedMP
\end{proof}
From this, we see that any polynomial $p\in\R[x]_d$ that is invariant under the action of $\stab(0)$ is of the form $\sum_{i=1}^d \lambda_i \kraw_i(|x|)$ for some scalars $\lambda_i$, and thus $p(x)=u(|x|)$ for some univariate polynomial $u \in \R[t]_d$.

It will sometimes be convenient to work with a different normalization of the Krawtchouk polynomials, given by:
\begin{equation}
	\label{EQ:krawnorm}
    \krawnorm_k(t) := \kraw_k(t) / \kraw_k(0) \quad (k \in \N).
\end{equation}
So $\krawnorm_k(0)=1$. Note that for any $k \in \N$, we have \[ \|\kraw_k\|^2_\omega := \langle \kraw_k, \kraw_{k}\rangle_\omega = {n \choose k} = \kraw_k(0),\] meaning that $\krawnorm_k(t) = \kraw_k(t) / \|\kraw_k\|^2_\omega$.

Finally we give a short proof of two basic facts on Krawtchouk polynomials that will be used below.

\begin{lemma}
\label{LEM:kraw1}
We have:
\[
\krawnorm_k(t) \leq \krawnorm_0(t) = 1
\]
for all $0\le k \leq n$ and $t \in [0 : n]$.
\end{lemma}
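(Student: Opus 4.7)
The plan is to reduce the claimed inequality to the trivial fact that each character $\chi_a$ takes values in $\{\pm 1\}$. First I would deal with the baseline: evaluating the definition \eqref{EQ:krawdef} at $k = 0$ gives $\kraw_0(t) = \binom{t}{0}\binom{n-t}{0} = 1$ identically, and similarly $\kraw_0(0) = 1$, so $\krawnorm_0(t) = 1$ for every $t \in [0:n]$.

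Next I would compute $\kraw_k(0)$. Substituting $t = 0$ into \eqref{EQ:krawdef}, only the $i = 0$ term survives (because $\binom{0}{i} = 0$ for $i \geq 1$), yielding $\kraw_k(0) = \binom{n}{k}$. This matches the identity $\|\kraw_k\|_\omega^2 = \binom{n}{k} = \kraw_k(0)$ already recorded after \eqref{EQ:krawnorm}.

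The heart of the argument is the representation \eqref{EQ:xt} from Lemma \ref{lemxt}, which tells us
\[
	\kraw_k(t) = \sum_{|a| = k} \chi_a(1^t 0^{n-t}).
\]
Since each character $\chi_a(x) = (-1)^{x \cdot a}$ takes values in $\{-1, +1\}$, the right-hand side is a sum of exactly $\binom{n}{k}$ terms, each of absolute value $1$. The triangle inequality therefore gives $|\kraw_k(t)| \leq \binom{n}{k} = \kraw_k(0)$, and dividing through by $\kraw_k(0) > 0$ yields $\krawnorm_k(t) \leq |\krawnorm_k(t)| \leq 1 = \krawnorm_0(t)$, which is the claimed inequality. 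There is no real obstacle here; the only thing to be careful about is that the identity in \eqref{EQ:xt} is stated for $k \leq n$, which is exactly the range of $k$ considered in the lemma.
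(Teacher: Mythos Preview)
Your proof is correct and follows essentially the same approach as the paper: both use the representation \eqref{EQ:xt} together with $|\chi_a(x)| = 1$ to bound $\kraw_k(t) \le \binom{n}{k} = \kraw_k(0)$, then normalize. Your version is slightly more detailed (verifying $\krawnorm_0 \equiv 1$ and $\kraw_k(0) = \binom{n}{k}$ explicitly) and in fact establishes the marginally stronger bound $|\krawnorm_k(t)| \le 1$, but the core idea is identical.
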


\begin{proof}
Given $t \in [0:n]$ consider an element  $x \in \cube{n}$ with Hamming weight $t$, for instance the element $1^t 0^{n-t}$ from Lemma \ref{lemxt}. 
By \eqref{EQ:xt} 
we have
\[
\kraw_k(t) = 
 \sum_{|a| = k} \chi_a(x) \leq {n \choose k} = \kraw_k(0),
\]
making use of the fact that 
$|\chi_a(x)| = 1$ for all $a \in \cube{n}$. \qedMP
\end{proof}

\begin{lemma}
\label{LEM:krawabsdist}
We have:
\begin{equation}
\label{EQ:krawabsdist}
\begin{aligned}
    |\krawnorm_k(t) - \krawnorm_k(t+1)| &\leq \frac{2k}{n}, \quad\quad &&(t=0,1,\dots,n-1)\\
    |\krawnorm_k(t) - 1| &\le {2k\over n} \cdot t &&(t=0,1,\dots,n)
\end{aligned}
\end{equation}
for all $0\le k \leq n$. 
\end{lemma}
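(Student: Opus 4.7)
The plan is to prove the first inequality by a direct combinatorial computation in the character basis, and then derive the second inequality from the first by a telescoping argument.

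For the first inequality, I would write $\kraw_k(t)$ in the form (\ref{EQ:xt}), namely
$\kraw_k(t) = \sum_{|a|=k} \chi_a(1^t 0^{n-t})$.
Let me denote $x_t := 1^t 0^{n-t} \in \cube{n}$, so that $\chi_a(x_t) = (-1)^{a_1 + \cdots + a_t}$. Going from $x_t$ to $x_{t+1}$ flips the $(t+1)$-st bit, so for a given $a$ with $|a|=k$ the sign $(-1)^{a_1+\cdots+a_{t+1}}$ agrees with $(-1)^{a_1+\cdots+a_t}$ when $a_{t+1}=0$, and differs by a factor $-1$ when $a_{t+1}=1$. Hence
\begin{equation*}
\kraw_k(t) - \kraw_k(t+1) = 2 \sum_{\substack{|a|=k\\ a_{t+1}=1}} (-1)^{a_1+\cdots+a_t}.
\end{equation*}
The number of vectors $a \in \cube{n}$ with $|a|=k$ and $a_{t+1}=1$ is $\binom{n-1}{k-1}$, and each term in the sum has absolute value $1$, so $|\kraw_k(t)-\kraw_k(t+1)| \leq 2\binom{n-1}{k-1}$. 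Dividing by $\kraw_k(0)=\binom{n}{k}$ yields
\begin{equation*}
|\krawnorm_k(t)-\krawnorm_k(t+1)| \leq \frac{2\binom{n-1}{k-1}}{\binom{n}{k}} = \frac{2k}{n},
\end{equation*}
which is the first inequality.

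For the second inequality, I would simply telescope using $\krawnorm_k(0) = 1$:
\begin{equation*}
|\krawnorm_k(t)-1| = \Bigl|\sum_{s=0}^{t-1}\bigl(\krawnorm_k(s+1)-\krawnorm_k(s)\bigr)\Bigr| \leq \sum_{s=0}^{t-1} \frac{2k}{n} = \frac{2k}{n}\cdot t.
\end{equation*}

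I do not expect any genuine obstacle here: the only point worth flagging is that the combinatorial identity $\binom{n-1}{k-1}/\binom{n}{k} = k/n$ is what exactly produces the clean constant $2k/n$, and that the character representation (\ref{EQ:xt}) is exactly the right vehicle to turn the difference $\kraw_k(t)-\kraw_k(t+1)$ into a sum whose cardinality can be controlled trivially by the triangle inequality.
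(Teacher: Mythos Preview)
Your proposal is correct and follows essentially the same approach as the paper: both use the character representation \eqref{EQ:xt} to bound $|\kraw_k(t)-\kraw_k(t+1)|$ by $2\binom{n-1}{k-1}$ via the observation that only terms with $a_{t+1}=1$ contribute, and then derive the second inequality by telescoping from $\krawnorm_k(0)=1$. Your write-up is in fact slightly more explicit than the paper's, as you display the exact identity $\kraw_k(t)-\kraw_k(t+1)=2\sum_{|a|=k,\,a_{t+1}=1}(-1)^{a_1+\cdots+a_t}$ before applying the triangle inequality.
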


\begin{proof}
Let $t\in [0:n-1]$ and $0 < k \leq d$. Consider the elements $1^t 0^{n-t}$ and $1^{t+1} 0^{n-t-1}$ of $\cube{n}$ from Lemma \ref{lemxt}. We have:
\begin{align*}
|\kraw_k(t) - \kraw_k(t+1)| 
&\overset{\eqref{EQ:xt}}{=}
 |\sum_{|a| = k} \chi_a(1^t 0^{n-t}) - \chi_a(1^{t+1} 0^{n-t-1})| \\
&\leq 2 \cdot \#\big\{ a \in \cube{n} : |a| = k, a_{t+1} = 1\big\} = 2{n - 1 \choose k-1},
\end{align*}
where for the  inequality we note that $\chi_a(1^t 0^{n-t}) = \chi_a(1^{t+1} 0^{n-t-1})$ if $a_{t+1} = 0$. As $\kraw_k(0) = {n \choose k}$, this implies that:
\[
|\krawnorm_k(t) - \krawnorm_k(t+1)| \leq 2{n - 1 \choose k-1} / {n \choose k} = \frac{2k}{n}.
\]
This shows the first inequality of (\ref{EQ:krawabsdist}). The second inequality follows using the triangle inequality, a telescope summation argument and the fact that $\krawnorm_k(0)=1$. \qedMP
\end{proof}

\subsection*{Invariant kernels and the Funk-Hecke formula}
Given a univariate polynomial $u \in \R[t]$ of degree $r$ with $2r \geq d$, consider the kernel ${\kernel : \cube{n} \times \cube{n} \to \R}$ defined by 
\begin{equation}
\label{EQ:kernel}
    \kernel(x, y) := u^2(d(x,y)).
\end{equation}
A kernel of the form \eqref{EQ:kernel} coincides with a polynomial of degree $2\mathrm{deg}(u)$ in $x$ on the binary cube $\cube{n}$, as $d(x,y) = \sum_i (x_i + y_i - 2x_iy_i)$ for $x, y \in \cube{n}$. 
Furthermore, it is invariant under $\Aut(\cube{n})$, in the sense that:
\[
\kernel(x, y) = \kernel(\pi(x), \pi(y)) \quad\forall x,y\in\cube{n}, \pi \in \Aut(\cube{n}).
\]
The kernel $\kernel$ acts as a linear operator $\kernelop : \redpolys{x} \to \redpolys{x}$ by:
\begin{equation}\label{eqKp}
\kernelop p(x) := \int_{\cube{n}} p(y) \kernel(x, y) d\mu(y) = \frac{1}{2^n} \sum_{y \in \cube{n}} p(y) \kernel(x,y).
\end{equation}
We may expand the univariate polynomial $u^2 \in \R[t]_{2r}$ in the basis of Krawtchouk polynomials as:
\begin{equation}
\label{EQ:q2expansion}
        u^2(t) = \sum_{i = 0}^{2r} \lambda_i \kraw_i(t) \quad (\lambda_i \in \R).
\end{equation}
As we show now, the eigenvalues of the operator $\kernelop$ are given precisely by the coefficients $\lambda_i$ occurring in  this expansion. This relation is analogous to the classical Funk-Hecke formula for spherical harmonics (see, e.g., \cite{FangFawzi2019}).

\begin{theorem}[Funk-Hecke]
\label{THM:FunckHecke}
\sloppy
Let $p  \in \redpolys{x}_d$ with harmonic decomposition ${p= p_0 + p_1 + \dots + p_d}$. Then we have:
\begin{equation}
\label{EQ:FunkHecke}
\kernelop p = \lambda_0 p_0 + \lambda_1 p_1 + \dots + \lambda_d p_d.
\end{equation}
\end{theorem}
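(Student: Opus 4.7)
The plan is to prove the Funk–Hecke formula by showing that $\kernelop$ acts as multiplication by $\lambda_k$ on each harmonic subspace $H_k$. Since $H_k = \spann\{\chi_a : |a| = k\}$ and $\kernelop$ is linear, it suffices to verify that
\[
\kernelop \chi_a = \lambda_k \chi_a \qquad \text{for every } a \in \cube{n} \text{ with } |a| = k,
\]
for $0 \le k \le d$. The claimed identity $\kernelop p = \sum_{k=0}^d \lambda_k p_k$ then follows immediately from the harmonic decomposition $p = p_0 + \dots + p_d$, $p_k \in H_k$, by linearity.

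To compute $\kernelop \chi_a$, I would start from the definition (\ref{eqKp}) and substitute the expansion (\ref{EQ:q2expansion}) of $u^2$ in the Krawtchouk basis:
\[
\kernelop \chi_a(x) = \frac{1}{2^n} \sum_{y \in \cube{n}} \chi_a(y) \, u^2\!\big(d(x,y)\big) = \sum_{i=0}^{2r} \lambda_i \cdot \frac{1}{2^n}\sum_{y \in \cube{n}} \chi_a(y)\, \kraw_i\!\big(d(x,y)\big).
\]
The key step is then to apply Lemma~\ref{lemxt} to rewrite $\kraw_i(d(x,y))$ as a sum of products of characters:
\[
\kraw_i\!\big(d(x,y)\big) = \sum_{|b| = i} \chi_b(x) \chi_b(y).
\]
Substituting this in and exchanging the order of summation gives
\[
\kernelop \chi_a(x) = \sum_{i=0}^{2r} \lambda_i \sum_{|b|=i} \chi_b(x) \cdot \frac{1}{2^n}\sum_{y \in \cube{n}} \chi_a(y)\chi_b(y).
\]

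Now the orthonormality relation (\ref{EQ:characterinner}) collapses the inner sum to $\delta_{a,b}$, leaving only the contribution from $b=a$, which has $|b|=k$. Thus $\kernelop \chi_a(x) = \lambda_k \chi_a(x)$, as desired. I do not anticipate any real obstacle: the argument is essentially a transparent use of the character orthogonality combined with Lemma~\ref{lemxt}, and the hypothesis $2r \ge d$ ensures that the indices $i = 0, \ldots, 2r$ in the Krawtchouk expansion cover all values of $k \le d$ relevant to the harmonic decomposition of $p$ (for $k>2r$ or $k$ not appearing in the expansion, the corresponding $\lambda_k$ is simply zero, which is consistent with the formula).
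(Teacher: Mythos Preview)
Your proposal is correct and follows essentially the same approach as the paper: reduce by linearity to showing $\kernelop\chi_a=\lambda_{|a|}\chi_a$, expand $u^2$ in the Krawtchouk basis, apply Lemma~\ref{lemxt} (relation~\eqref{EQ:krawcharacter}) to write $\kraw_i(d(x,y))=\sum_{|b|=i}\chi_b(x)\chi_b(y)$, and then use character orthonormality~\eqref{EQ:characterinner} to isolate the $b=a$ term. The only difference is notational (the paper uses $z$ and $a$ where you use $a$ and $b$).
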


\begin{proof}
It suffices to show that 
$
\kernelop \chi_z = \lambda_{|z|} \chi_z
$
for all $z \in \cube{n}$.
So we compute for $x \in \cube{n}$:
\begin{align*}
    \kernelop \chi_z(x) 
    = \frac{1}{2^n} \sum_{y \in \cube{n}} \chi_z(y) u^2(d(x, y))
    &\overset{\eqref{EQ:q2expansion}}{=}  \frac{1}{2^n} \sum_{y \in \cube{n}} \chi_z(y) \sum_{i = 0}^{2r} \lambda_i \kraw_i(d(x,y)) \\
    &\overset{\eqref{EQ:krawcharacter}}{=} \sum_{i = 0}^{2r} \lambda_i \sum_{y \in \cube{n}} \chi_z(y) \sum_{|a| = i} \chi_{a}(x) \chi_{a}(y) \\
    &= \sum_{i = 0}^{2r} \lambda_i \sum_{|a| = i} \Big(\sum_{y \in \cube{n}} \chi_z(y) \chi_{a}(y) \Big) \chi_{a}(x) \\
    &\overset{\eqref{EQ:characterinner}}{=} \frac{1}{2^n} \sum_{i = 0}^{2r} \lambda_i \sum_{|a| = i} 2^n \delta_{z, a}  \chi_{a}(x) \\
    &= \lambda_{|z|} \chi_z(x).
\end{align*} \qedMP
\end{proof}
Finally, we note that since the Krawtchouk polynomials form an \emph{orthogonal} basis for $\R[t]$, we may express the coefficients $\lambda_i$ in the decomposition  (\ref{EQ:q2expansion}) of $u^2$ in the following way:
\begin{equation}
    \label{eq:krawnormlambda}  
      \lambda_i = \langle \kraw_i, u^2 \rangle_\omega ~/~ \|\kraw_i\|^2_\omega = \langle \krawnorm_i, u^2 \rangle_\omega.
\end{equation}
In addition, since in view of Lemma \ref{LEM:kraw1} we have $\krawnorm_i(t)\le \krawnorm_0(t)$ for all $t\in [0:n]$, it folllows that 
\begin{equation}\label{eqlambda}
\lambda_i \le \lambda_0 \quad \text{ for } 0\le i\le 2r.
\end{equation}

\section{The inner Lasserre hierachy and orthogonal polynomials}
\label{SEC:innerbound}

The inner Lasserre hierachy, which we have  defined for the boolean cube in \eqref{EQ:upbound},  may be defined more generally for the minimization of a polynomial $g$ over a compact set $M \subseteq \R^n$ equipped with a measure $\nu$ with support $M$, by setting:
\begin{equation}
\label{EQ:generalupbound}
	g^{(r)} := g^{(r)}_{M, \nu} = \inf_{s \in \Sigma_r} \left\{ \int_M {g \cdot s d\nu} : \int_M s d\nu = 1 \right\}
\end{equation}
for any integer $r\in \N$. So we have: $g^{(r)}\ge g_{\min}:=\min_{x\in M}g(x).$
A crucial ingredient of the proof of our main theorem below is an analysis of the error $g^{(r)} - g_{\min}$ in this hierarchy for a special choice of  $M \subseteq \R$, $g$ and $\nu$.

Here, we recall a technique which may be used to perform such an analysis in the univariate case, which was developed in \cite{deKlerkLaurent2018} and further employed for this purpose, e.g., in \cite{deKlerkLaurent2019,SlotLaurent2019}.

First, we observe that we may always replace $g$ by a suitable \emph{upper estimator} $\widehat g$ which satisfies $\widehat g_{\min} = g_{\min}$ and $\widehat g(x) \geq g(x)$ for all $x \in M$. Indeed, it is clear that for such $\widehat g$ we have:
\[
	g^{(r)} - g_{\min} \leq \widehat g^{(r)} - g_{\min} = \widehat g^{(r)} - \widehat g_{\min}.
\]

Next, we consider the special case when $M\subseteq \mathbb R$ and $g(t) = t$. Here, the bound $g^{(r)}$ may be expressed in terms of the orthogonal polynomials on $M$ w.r.t. the measure $\nu$, i.e., the polynomials $p_i \in \R[t]_i$ determined by the relation:
\[
	\int_M p_i p_j d\nu = 0 \text{ if } i \neq j.
\]
\begin{theorem}[\cite{deKlerkLaurent2018}]
\label{THM:deKlerkLaurentinner}
Let $M \subseteq \R$ be an interval 
and let $\nu$ be a measure supported on $M$ with corresponding orthogonal polynomials $p_i\in\R[t]_i$ ($i \in \N$). Then the Lasserre inner bound $g^{(r)}$ (from (\ref{EQ:generalupbound})) of order $r$ for the polynomial $g(t) = t$  equals
\[
	g^{(r)} = \xi_{r+1},
\]
where $\xi_{r+1}$ is the smallest root of the polynomial $p_{r+1}$.
\end{theorem}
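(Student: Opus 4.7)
The plan is to reformulate $g^{(r)}$ as a Rayleigh-quotient minimization over a single square and then identify the optimum with $\xi_{r+1}$ by invoking Gaussian quadrature. First I would reduce the feasible set in (\ref{EQ:generalupbound}) from sums of squares of degree at most $2r$ to single squares of degree at most $2r$. Indeed, if $s=\sum_i q_i^2$ with $q_i\in\R[t]_r$ and each $q_i\ne 0$, then by the mediant inequality
\[
\frac{\int_M t\, s\, d\nu}{\int_M s\, d\nu}
=\frac{\sum_i \int_M t\,q_i^2\, d\nu}{\sum_i \int_M q_i^2\, d\nu}
\ \ge\ \min_i \frac{\int_M t\, q_i^2\, d\nu}{\int_M q_i^2\, d\nu}.
\]
Hence, after the trivial normalization $\int_M s\, d\nu=1$,
\[
g^{(r)}\ =\ \inf_{q\in\R[t]_r\setminus\{0\}}\ \frac{\int_M t\, q(t)^2\, d\nu(t)}{\int_M q(t)^2\, d\nu(t)}.
\]

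Next I would invoke the classical $(r{+}1)$-point Gauss quadrature rule associated with $\nu$: the roots $x_1<x_2<\dots<x_{r+1}$ of $p_{r+1}$ (so $\xi_{r+1}=x_1$) carry positive weights $w_1,\dots,w_{r+1}>0$ such that
\[
\int_M h\, d\nu\ =\ \sum_{i=1}^{r+1} w_i\, h(x_i)
\]
for every $h\in\R[t]$ of degree at most $2r+1$. Both $tq^2$ and $q^2$ lie in this degree range for $q\in\R[t]_r$, so
\[
\int_M t\, q^2\, d\nu\ =\ \sum_{i=1}^{r+1} w_i\, x_i\, q(x_i)^2
\ \ge\ x_1\sum_{i=1}^{r+1} w_i\, q(x_i)^2\ =\ \xi_{r+1}\int_M q^2\, d\nu,
\]
yielding the lower bound $g^{(r)}\ge \xi_{r+1}$.

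For the matching upper bound I would take the explicit choice $q^\star(t):=\prod_{i=2}^{r+1}(t-x_i)\in\R[t]_r$, which vanishes at every quadrature node except $x_1=\xi_{r+1}$. Plugging $q^\star$ into the two quadrature sums leaves only the term indexed by $i=1$ in each, giving
\[
\frac{\int_M t\, (q^\star)^2\, d\nu}{\int_M (q^\star)^2\, d\nu}\ =\ \frac{w_1\,x_1\,q^\star(x_1)^2}{w_1\,q^\star(x_1)^2}\ =\ \xi_{r+1},
\]
so a suitably rescaled $s=(q^\star)^2$ is feasible in (\ref{EQ:generalupbound}) with objective value $\xi_{r+1}$. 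Combined with the previous inequality, this gives $g^{(r)}=\xi_{r+1}$.

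I do not anticipate a hard obstacle: both ingredients are classical. The only points to be careful about are (i) the reduction to a single square, handled by the mediant inequality above, and (ii) the positivity of the Gauss weights $w_i$, which is the standard fact for orthogonal polynomials with respect to a positive Borel measure on an interval. The proof essentially hinges on the observation that $g(t)=t$ makes the integrand $tq^2$ of degree at most $2r+1$, exactly matching the exactness range of the $(r{+}1)$-point Gauss rule.
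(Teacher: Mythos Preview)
Your argument is correct and is essentially the standard proof of this fact. However, note that the paper does not actually prove Theorem~\ref{THM:deKlerkLaurentinner}: it is quoted from \cite{deKlerkLaurent2018} and stated without proof, so there is no in-paper argument to compare yours against. Your approach via the $(r{+}1)$-point Gauss quadrature rule is exactly the classical one; the reduction to a single square via the mediant inequality, the lower bound from positivity of the Gauss weights, and the sharpness via the interpolation polynomial $q^\star$ vanishing at all nodes but $\xi_{r+1}$ are all fine. The one cosmetic point worth noting is that the paper later applies this result with $M=[0:n]$ a finite discrete set (not an interval in the usual sense); your proof goes through unchanged in that setting as long as $r+1\le n+1$, since the orthogonal polynomials $p_0,\dots,p_{r+1}$ and the associated Gauss rule still exist.
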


\begin{remark}
\label{REM:linearunivariate}
The upshot of the above result is that, for any polynomial $g : \R \rightarrow \R$ which is upper bounded on an interval $M\subseteq \R$ by a linear polynomial $\widehat g(t) = ct$ for some $c > 0$, we have:
\begin{equation}
	\label{EQ:upboundleastroot}
	g^{(r)} - g_{\min} \leq c \cdot \xi_{r+1},
\end{equation}
where $\xi_{r+1}$ is the smallest root of the corresponding orthogonal polynomial of degree $r+1$.
\end{remark}

\section{Proof of Theorem \ref{THM:main}}
\label{SEC:mainproof}

Throughout, $d\le n$ is a fixed integer (the degree of the polynomial $f$ to be minimized over $\cube{n}$). Recall $u \in \R[t]$ is a univariate polynomial with degree $r$ (that we need to select) with $2r\ge d$.
Consider  the associated kernel $\kernel$ defined in \eqref{EQ:kernel} and the corresponding linear operator $\kernelop$ from (\ref{eqKp}).
 Recall from \eqref{EQ:overview1} that we are interested in bounding the quantity:
\begin{equation*}
	\|\kernelop^{-1} - I\| := \sup_{p \in \R[x]_d} \frac{\| \kernelop^{-1} p - p \|_\infty}{\|p\|_\infty}.
\end{equation*}
Our proof consists of two parts. First, we relate the coefficients $\lambda_i$, that appear in  the decomposition \eqref{EQ:q2expansion} of $u^2(t) = \sum_{i = 0}^{2r} \lambda_i \kraw_i(t)$ into the basis of Krawtchouk polynomials, to the quantity $\|\kernelop^{-1} - I\|$.

Then, using this relation and the connection between the \emph{inner} Lasserre hierarchy and extremal roots of orthogonal polynomials outlined in Section \ref{SEC:innerbound}, we show that $u$ may be chosen such that $\|\kernelop^{-1}-I\|$ is of the order $\xi^n_{r+1} / n$, with $\xi^n_{r+1}$ the smallest root of the degree $r+1$ Krawtchouk polynomial (with parameter $n$).

\subsection*{Bounding $\|\kernelop^{-1}-I\|$ in terms of the $\lambda_i$}
We need the following technical lemma, which bounds the sup-norm $\|p_k\|_\infty$ of the harmonic components $p_k$ of a polynomial $p \in \redpolys{x}$ in terms of $\|p\|_\infty$, the sup-norm of $p$ itself. The key point is that this bound is independent of the dimension $n$. We delay the proof which is rather technical to Section~\ref{APP:harmcompbound}.
\begin{lemma}
\label{LEM:cube}
There exists a constant $\harmbound_d > 0$, depending only on $d$, such that for any $p = p_0 + p_1 + \ldots + p_d \in \redpolys{x}_d$, we have:
\[
    \| p_k \|_\infty \leq \harmbound_d \|p\|_\infty \text{ for all } 0\le k \leq d.
\]
\end{lemma}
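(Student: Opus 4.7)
My plan is to prove Lemma \ref{LEM:cube} using the noise (Bonami--Beckner) operator on $\cube{n}$ combined with a Vandermonde inversion argument. For $\rho\in[-1,1]$, let $T_\rho:\redpolys{x}\to\redpolys{x}$ be the linear operator defined on the character basis by $T_\rho\chi_a=\rho^{|a|}\chi_a$ for every $a\in\cube{n}$. Using the product form $\chi_a(x)=\prod_{i:a_i=1}(1-2x_i)$, a direct coordinatewise computation gives the probabilistic representation
$$T_\rho p(x) \;=\; \sum_{y\in\cube{n}} p(y)\,P_\rho(x,y),$$
where $P_\rho(x,\cdot)$ is the product distribution on $\cube{n}$ obtained by flipping each coordinate of $x$ independently with probability $(1-\rho)/2$. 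For $\rho\in[-1,1]$ the weights $P_\rho(x,y)$ are nonnegative and sum to $1$, so $T_\rho p(x)$ is a convex combination of values of $p$. This shows that $T_\rho$ is an $L^\infty$-contraction: $\|T_\rho p\|_\infty\le\|p\|_\infty$ for every $\rho\in[-1,1]$.

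Applied to the harmonic decomposition $p=p_0+\ldots+p_d$, linearity of $T_\rho$ yields $T_\rho p=\sum_{k=0}^d \rho^k p_k$, since each $p_k$ lies in $H_k=\spann\{\chi_a:|a|=k\}$ and is therefore an eigenvector of $T_\rho$ with eigenvalue $\rho^k$. Now fix any $d+1$ distinct nodes $\rho_0,\ldots,\rho_d\in[-1,1]$ (e.g.\ $\rho_j=-1+2j/d$). The system
$$T_{\rho_j}p \;=\; \sum_{k=0}^d \rho_j^k\, p_k \qquad (j=0,1,\ldots,d)$$
is Vandermonde in the unknowns $p_0,\ldots,p_d$ and hence invertible, producing scalars $c_{kj}$ depending only on $d$ and the chosen nodes such that
$$p_k \;=\; \sum_{j=0}^d c_{kj}\, T_{\rho_j}p \qquad (k=0,1,\ldots,d).$$

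Taking sup-norms and using the contractivity of each $T_{\rho_j}$ immediately gives
$$\|p_k\|_\infty \;\le\; \Big(\sum_{j=0}^d|c_{kj}|\Big)\|p\|_\infty,$$
so the lemma holds with $\harmbound_d:=\max_{0\le k\le d}\sum_{j=0}^d|c_{kj}|$, a constant depending only on $d$ (and on the fixed choice of nodes, but not on $n$).

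The crux of the argument, and the reason the resulting bound is dimension-free, is that $p_k$ is extracted via $L^\infty$-contractions of $p$ itself, rather than through any explicit Fourier reconstruction formula whose norm would blow up with $n$ (e.g.\ any bound built from $\sum_{|a|=k}|\widehat{p}(a)|\le \binom{n}{k}\|p\|_\infty$). The only genuinely technical point is establishing the contractivity of $T_\rho$; this reduces to the coordinatewise factorization of $T_\rho\chi_a$, which is a one-line calculation from the product structure of the characters. Everything else is elementary linear algebra with a fixed $(d+1)\times(d+1)$ Vandermonde matrix.
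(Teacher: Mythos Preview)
Your proof is correct and takes a genuinely different route from the paper's.

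The paper proceeds by a symmetry-reduction argument: it shows the extremal $p$ for the ratio $\|p_k\|_\infty/\|p\|_\infty$ may be taken $\stab(0)$-invariant, hence of the form $\sum_i \lambda_i \krawnorm_i(|x|)$, reducing the problem to a finite linear program in $\lambda_0,\ldots,\lambda_d$. It then shows this LP is monotone in $n$ and passes to the limit $n\to\infty$, where $\krawnorm_k(nt)\to(1-2t)^k$; after the change of variable $x=1-2t$ one lands on the classical Markov extremal problem of bounding the $k$-th coefficient of a polynomial bounded by $1$ on $[-1,1]$, whose solution is given by Chebyshev polynomials. This yields the sharp value of $\gamma_d$ (in particular $\gamma_d\le(1+\sqrt2)^d$).

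Your argument short-circuits all of this. The noise operator $T_\rho$ is an $\Aut(\cube{n})$-invariant kernel with eigenvalue $\rho^k$ on $H_k$, and its probabilistic representation makes the $L^\infty$-contractivity immediate and dimension-free. The Vandermonde inversion then extracts $p_k$ as a fixed linear combination of $d+1$ contractions of $p$. In effect, you are evaluating the polynomial $\sum_k \lambda_k x^k$ at $d+1$ points of $[-1,1]$ and Lagrange-interpolating its $k$-th coefficient; this is exactly an upper bound for the same Markov problem the paper ultimately reduces to, obtained without any limiting procedure or symmetry reduction.

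What each buys: your proof is shorter, essentially self-contained, and makes transparent why the bound is independent of $n$. The paper's proof is longer but delivers the optimal constant $\gamma_d$ explicitly (via Theorem~\ref{THM:Markovextremal}), which feeds directly into the constant $C_d=d(d+1)\gamma_d$ appearing in the main theorems. If one cared to optimize your constant, choosing the $\rho_j$ to be Chebyshev nodes would recover a bound close to the paper's.
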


\smallskip
\begin{corollary}
Assume that $\lambda_0 = 1$ and $\lambda_k \ne 0$ for $1\le k\le d$. Then we have:
\begin{equation}
	\label{EQ:nonlinlamb}
    \|\kernelop^{-1} - I\| \leq \harmbound_d \cdot \nonlinlamb, \text{ where } \nonlinlamb := \sum_{i = 1}^d |\lambda_i^{-1} - 1|.
\end{equation}
\end{corollary}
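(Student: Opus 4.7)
The plan is to give a direct computational proof using the Funk-Hecke formula (Theorem~\ref{THM:FunckHecke}) together with the harmonic-component bound of Lemma~\ref{LEM:cube}. The assumption $\lambda_k \neq 0$ for $0 \le k \le d$ guarantees that $\kernelop$ is invertible on the subspace $\redpolys{x}_d = H_0 \perp H_1 \perp \cdots \perp H_d$, and by Funk-Hecke its action on this subspace is diagonal in the harmonic decomposition. This makes the analysis of $\kernelop^{-1} - I$ essentially coordinate-wise.

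Concretely, I would proceed as follows. Fix $p \in \redpolys{x}_d$ with harmonic decomposition $p = p_0 + p_1 + \cdots + p_d$ where $p_k \in H_k$. Since $\kernelop$ acts on $H_k$ as multiplication by $\lambda_k$ (by Theorem~\ref{THM:FunckHecke}) and all $\lambda_k$ are nonzero, $\kernelop^{-1}$ acts on $H_k$ as multiplication by $\lambda_k^{-1}$. Therefore
\begin{equation*}
    \kernelop^{-1} p - p \;=\; \sum_{k=0}^d (\lambda_k^{-1} - 1) \, p_k \;=\; \sum_{k=1}^d (\lambda_k^{-1} - 1) \, p_k,
\end{equation*}
where the $k=0$ term vanishes because $\lambda_0 = 1$. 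Taking sup-norms and applying the triangle inequality gives
\begin{equation*}
    \|\kernelop^{-1} p - p\|_\infty \;\le\; \sum_{k=1}^d |\lambda_k^{-1} - 1| \cdot \|p_k\|_\infty.
\end{equation*}

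Next I would invoke Lemma~\ref{LEM:cube}, which yields $\|p_k\|_\infty \le \harmbound_d \|p\|_\infty$ uniformly for $0 \le k \le d$, with $\harmbound_d$ independent of $n$. Substituting, we obtain
\begin{equation*}
    \|\kernelop^{-1} p - p\|_\infty \;\le\; \harmbound_d \, \|p\|_\infty \sum_{k=1}^d |\lambda_k^{-1} - 1| \;=\; \harmbound_d \cdot \nonlinlamb \cdot \|p\|_\infty.
\end{equation*}
Dividing by $\|p\|_\infty$ and taking the supremum over nonzero $p \in \redpolys{x}_d$ gives the desired inequality $\|\kernelop^{-1} - I\| \le \harmbound_d \cdot \nonlinlamb$.

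There is essentially no obstacle here; the corollary is a clean consequence of the spectral diagonalization of $\kernelop$ given by Funk-Hecke and the uniform dimension-free bound on harmonic components from Lemma~\ref{LEM:cube}. All the difficulty is hidden in the proof of Lemma~\ref{LEM:cube} itself, which is deferred to Section~\ref{APP:harmcompbound} and treated as a black box at this stage.
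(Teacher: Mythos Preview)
Your proposal is correct and follows essentially the same approach as the paper: apply Funk--Hecke to diagonalize $\kernelop$ on the harmonic decomposition, write $\kernelop^{-1}p - p = \sum_{i=1}^d(\lambda_i^{-1}-1)p_i$, bound by the triangle inequality, and invoke Lemma~\ref{LEM:cube}. There is no meaningful difference between your argument and the paper's.
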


\begin{proof}
By assumption, the operator $\kernelop$ is invertible and, in view of Funk-Hecke relation (\ref{EQ:FunkHecke}),  its inverse is given by
$\kernelop^{-1}p =\sum_{i=0}^d \lambda_i^{-1}p_k$ for 
 any $p = p_0 + p_1 + \ldots + p_d \in \redpolys{x}_d$.
 Then we have:
\begin{equation}
\begin{split}
\label{EQ:kernelharmbound}
\|\kernelop^{-1} p - p\|_\infty = \|\sum_{i=1}^d (\lambda_i^{-1} - 1)p_i \|_\infty &\leq \sum_{i=1}^d |\lambda_i^{-1} - 1| \|p_i\|_\infty \\
&\leq \sum_{i=1}^d |\lambda_i^{-1} - 1| \cdot \harmbound_d \| p \|_\infty,
\end{split}
\end{equation}
where we use Lemma \ref{LEM:cube} for the last inequality. \qedMP
\end{proof}

The expression $\nonlinlamb$ in \eqref{EQ:nonlinlamb} is difficult to analyze. Therefore, following \cite{FangFawzi2019}, we consider instead the simpler expression:
\begin{equation*}
	\linlamb := \sum_{i=1}^d (1 - \lambda_i) = d - \sum_{i=1}^d \lambda_i,
\end{equation*}
\sloppy
which is linear in the $\lambda_i$. Under the assumption that $\lambda_0 = 1$, we have ${\lambda_i \leq \lambda_0 = 1}$ for all $i$ (recall relation (\ref{eqlambda})).
Thus, $\nonlinlamb$ and $\linlamb$ are both minimized when the $\lambda_i$ are close to $1$. The following lemma makes this precise.

\begin{lemma}
\label{LEM:normbound}
Assume that $\lambda_0 = 1$ and that $\linlamb \leq 1/2$. Then we have $\nonlinlamb \leq 2 \linlamb$, and thus
\begin{equation*}
\|\kernelop^{-1} - I\| \leq 2 \harmbound_d \cdot \linlamb.
\end{equation*}
\end{lemma}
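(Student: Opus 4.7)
The plan is to reduce the claim to an elementary estimate on the single terms $\lambda_i^{-1}-1$. Recall from \eqref{eqlambda} that the assumption $\lambda_0=1$ forces $\lambda_i\le 1$ for all $i\in\{1,\dots,d\}$, so if I set $\epsilon_i:=1-\lambda_i\ge 0$, the hypothesis reads $\linlamb=\sum_{i=1}^d \epsilon_i \le 1/2$.

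The key consequence of this hypothesis is a uniform lower bound on the $\lambda_i$: since each $\epsilon_i$ is one of the nonnegative summands in $\linlamb$, I have $\epsilon_i\le 1/2$, hence $\lambda_i = 1-\epsilon_i \ge 1/2 > 0$ for every $i\in\{1,\dots,d\}$. In particular the inverses $\lambda_i^{-1}$ are well defined (so the definition of $\nonlinlamb$ makes sense, and the Funk–Hecke inversion used to derive \eqref{EQ:nonlinlamb} is legitimate).

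Now I would bound each term of $\nonlinlamb$ individually. Because $0<\lambda_i\le 1$, the quantity $\lambda_i^{-1}-1$ is nonnegative, so
\[
|\lambda_i^{-1}-1| \;=\; \lambda_i^{-1}-1 \;=\; \frac{1-\lambda_i}{\lambda_i} \;=\; \frac{\epsilon_i}{\lambda_i} \;\le\; \frac{\epsilon_i}{1/2} \;=\; 2\epsilon_i,
\]
where the inequality uses the lower bound $\lambda_i\ge 1/2$ established above. Summing over $i=1,\dots,d$ yields
\[
\nonlinlamb \;=\; \sum_{i=1}^d |\lambda_i^{-1}-1| \;\le\; 2\sum_{i=1}^d \epsilon_i \;=\; 2\linlamb,
\]
which is the first claim. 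The second claim then follows immediately by combining this bound with \eqref{EQ:nonlinlamb}. There is no real obstacle in this argument; the only point that requires care is checking that the hypothesis $\linlamb\le 1/2$ is strong enough to keep every $\lambda_i$ bounded away from $0$, which is exactly what converts the rational estimate $\epsilon_i/\lambda_i$ into a linear one.
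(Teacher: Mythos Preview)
Your proof is correct and follows essentially the same approach as the paper: both use that $\linlamb\le 1/2$ together with $\lambda_i\le \lambda_0=1$ forces $\lambda_i\ge 1/2$, then bound $(1-\lambda_i)/\lambda_i\le 2(1-\lambda_i)$ termwise and sum. The only difference is notational (you introduce $\epsilon_i=1-\lambda_i$ explicitly), and you spell out slightly more carefully why the $\lambda_i$ are positive so that $\kernelop^{-1}$ and $\nonlinlamb$ are well defined.
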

\begin{proof}
As we assume $\linlamb \leq 1/2$, we must have $1/2 \leq \lambda_i \leq 1$ for all $i$. Therefore, we may write:
\[
\nonlinlamb = \sum_{i = 1}^d |\lambda_i^{-1} - 1| = \sum_{i = 1}^d |(1 - \lambda_i) / \lambda_i| = \sum_{i = 1}^d (1 - \lambda_i) / \lambda_i \leq 2\sum_{i = 1}^d (1 - \lambda_i) = 2 \linlamb.
\] \qedMP
\end{proof}

\subsection*{Optimizing the choice of the univariate polynomial $u$}
In light of Lemma \ref{LEM:normbound}, and recalling \eqref{eq:krawnormlambda}, we wish to find a univariate polynomial $u \in \R[t]_r$ for which
\begin{align*}
    \lambda_0 &= \langle 1, u^2 \rangle_\omega = 1, \text{ and }\\
    \linlamb = d - \sum_{i = 1}^d \lambda_i &= d - \sum_{i=1}^d \langle \krawnorm_i, u^2 \rangle_\omega \text{ is small.}
\end{align*}
Unpacking the definition of $\langle \cdot, \cdot \rangle_{\omega}$, we thus need to solve the following optimization problem:
\begin{equation}
\label{EQ:qopt}
    \inf_{u \in \R[t]_r} \left\{\int g \cdot u^2 d\omega : \int u^2 d \omega = 1 \right\}, \text{ where } g(t) := d - \sum_{i=1}^d \krawnorm_i(t).
\end{equation}
(Indeed $\int gu^2d\omega =\langle g,u^2\rangle_\omega= \tilde \Lambda$ and $\int u^2d\omega= \langle 1,u^2\rangle_\omega$.)
We recognize this program to be the same as the program \eqref{EQ:generalupbound} defining the inner Lasserre bound\footnote{Technically, the program \eqref{EQ:generalupbound} allows for the density to be a \emph{sum of squares}, whereas the program \eqref{EQ:qopt} requires the density to be an actual square. This is no true restriction, though, since, as a straightforward convexity argument shows, the optimum solution to \eqref{EQ:generalupbound} can in fact always be chosen to be a square  \cite{Lasserre2010}.}  of order $r$ for the minimum $g_{\min} = g(0) = 0$ of the polynomial $g$ over $[0 : n]$, computed with respect to the measure $d\omega(t) = 2^{-n}{n \choose t}$. Hence the optimal value of (\ref{EQ:qopt}) is equal to $g^{(r)}$ and, using Lemma \ref{LEM:normbound}, we may conclude the following.

\begin{theorem}
\label{THM:innerlasbound}
Let $g$ be as in \eqref{EQ:qopt}. Assume that $g^{(r)} - g_{\min} \leq 1/2$. Then  there exists a polynomial $u \in \R[t]_r$ such that  $\lambda_0 = 1$ and
\[
\| \kernelop^{-1} - I \| \leq 2 \harmbound_d \cdot (g^{(r)} - g_{\min}).
\]
Here, $g^{(r)}$ is the inner Lasserre bound on $g_{\min}$ of order $r$, computed on $[0, n]$ w.r.t. $\omega$, via the program (\ref{EQ:qopt}), and  $\harmbound_d$ is the constant of Lemma \ref{LEM:cube}.
\end{theorem}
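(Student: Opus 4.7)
The statement is essentially a direct assembly of the machinery already developed. My plan is to observe that the optimization program in \eqref{EQ:qopt} is precisely the univariate inner Lasserre program for $g$ on $[0:n]$ with respect to $\omega$, to pick an optimizer $u$, to translate optimality into the statements $\lambda_0 = 1$ and $\linlamb = g^{(r)} - g_{\min}$, and finally to invoke Lemma \ref{LEM:normbound}.

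The preparatory step is to identify $g_{\min}$. By Lemma \ref{LEM:kraw1}, each normalized Krawtchouk polynomial satisfies $\krawnorm_i(t) \leq \krawnorm_0(t) = 1$ for all $t \in [0:n]$, so $g(t) = d - \sum_{i=1}^d \krawnorm_i(t) \geq 0$, with equality at $t = 0$ (using $\krawnorm_i(0) = 1$ from \eqref{EQ:krawnorm}). Hence $g_{\min} = 0$. Next, I would let $u \in \R[t]_r$ be an optimizer of \eqref{EQ:qopt}. Such an optimizer exists because, for $r \leq n$, the measure $\omega$ is nondegenerate on $\R[t]_r$, so the feasible set $\{u : \int u^2 d\omega = 1\}$ is a compact subset of a finite-dimensional space and the objective is continuous. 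The discussion following \eqref{EQ:qopt} (together with its footnote on convexity) identifies the optimal value of this program with the inner Lasserre bound $g^{(r)}$.

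It remains to translate the optimality of $u$ into the desired bounds. From \eqref{eq:krawnormlambda} and $\krawnorm_0 \equiv 1$, we immediately get $\lambda_0 = \langle \krawnorm_0, u^2 \rangle_\omega = \int u^2 d\omega = 1$, and for $1 \leq i \leq d$ we have $\lambda_i = \langle \krawnorm_i, u^2 \rangle_\omega$. Linearity of the inner product then gives
\[
    \linlamb \;=\; d - \sum_{i=1}^d \lambda_i \;=\; \int \Bigl( d - \sum_{i=1}^d \krawnorm_i \Bigr) u^2 \, d\omega \;=\; \int g \, u^2 \, d\omega \;=\; g^{(r)} \;=\; g^{(r)} - g_{\min}.
\]
Since $g^{(r)} - g_{\min} \leq 1/2$ by hypothesis, $\linlamb \leq 1/2$, so Lemma \ref{LEM:normbound} applies and yields $\|\kernelop^{-1} - I\| \leq 2\harmbound_d \cdot \linlamb = 2\harmbound_d \cdot (g^{(r)} - g_{\min})$, as desired. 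There is no real obstacle: every nontrivial ingredient (Funk--Hecke, the sup-norm bound of Lemma \ref{LEM:cube}, the passage from $\nonlinlamb$ to $\linlamb$) has already been established, and the only mild subtlety---attainment of the infimum---would be handled by a short $\varepsilon$-perturbation argument if ever needed.
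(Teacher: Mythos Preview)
Your proposal is correct and follows the same route as the paper: the paper states the theorem immediately after observing that program \eqref{EQ:qopt} computes $g^{(r)}$ and simply says ``using Lemma~\ref{LEM:normbound}, we may conclude the following,'' leaving implicit exactly the steps you spell out (namely $g_{\min}=0$, choosing an optimal $u$, reading off $\lambda_0=1$ and $\linlamb=g^{(r)}-g_{\min}$, and invoking Lemma~\ref{LEM:normbound}). Your added remarks on $g_{\min}=0$ via Lemma~\ref{LEM:kraw1} and on attainment of the infimum by compactness are the natural way to make these implicit points explicit.
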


It remains, then, to analyze the range $g^{(r)} - g_{\min}$. For this purpose, we follow the technique outlined in Section \ref{SEC:innerbound}.
We first show that $g$ can be upper bounded by its linear approximation at $t = 0$.
\begin{lemma}
\label{LEM:kraw2}
We have:
\[
g(t) \leq \widehat g(t) := d(d+1) \cdot (t/n) \quad \forall t \in [0:n].
\]
Furthermore, the minimum $\hat g_{\min}$ of $\hat g$ on $[0 : n]$ clearly satisfies $\widehat g_{\min} = \widehat g(0) = g(0) = g_{\min}$.
\end{lemma}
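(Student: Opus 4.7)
The plan is to write $g$ as a sum of the differences $1 - \krawnorm_i(t)$ and then apply the second inequality of Lemma~\ref{LEM:krawabsdist} termwise. Since $\krawnorm_i(0) = 1$ for every $i$, we have $g(0) = d - \sum_{i=1}^d 1 = 0$, which gives the claim about $\widehat g_{\min} = g_{\min} = 0$; so the main content is the pointwise upper bound.

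For the upper bound, I would first rewrite
\[
g(t) = d - \sum_{i=1}^d \krawnorm_i(t) = \sum_{i=1}^d \bigl(1 - \krawnorm_i(t)\bigr) \le \sum_{i=1}^d \bigl|1 - \krawnorm_i(t)\bigr|.
\]
Then I would invoke the second inequality in Lemma~\ref{LEM:krawabsdist}, which states that $|\krawnorm_i(t) - 1| \le (2i/n) \cdot t$ for all $t \in [0:n]$ and $0 \le i \le n$. Substituting this into the sum yields
\[
g(t) \le \sum_{i=1}^d \frac{2i \, t}{n} = \frac{2t}{n} \cdot \frac{d(d+1)}{2} = \frac{d(d+1) \, t}{n} = \widehat g(t),
\]
which is exactly the desired inequality. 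Since $\widehat g$ is a nonnegative linear function on $[0:n]$ that vanishes at $t=0$, its minimum on $[0:n]$ is attained at $t=0$ with value $\widehat g(0) = 0 = g(0) = g_{\min}$.

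There is no real obstacle here: the statement is essentially a direct consequence of the triangle-inequality/telescoping bound already established in Lemma~\ref{LEM:krawabsdist}, combined with the identity $\sum_{i=1}^d i = d(d+1)/2$. The only thing worth emphasizing in the write-up is that the first step relies on $\krawnorm_i(t) \le 1$ (from Lemma~\ref{LEM:kraw1}) only implicitly -- one does not even need it, since replacing $1 - \krawnorm_i(t)$ by its absolute value is always valid and the resulting bound is what is stated.
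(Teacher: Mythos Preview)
Your proof is correct and essentially identical to the paper's: both rewrite $g(t)=\sum_{i=1}^d(1-\krawnorm_i(t))$ and bound each term using the second inequality of Lemma~\ref{LEM:krawabsdist}, then sum $\sum_{i=1}^d 2i/n = d(d+1)/n$. The only cosmetic difference is that the paper phrases the termwise bound as $\krawnorm_i(t)\ge 1-(2i/n)t$ rather than passing through the absolute value $|1-\krawnorm_i(t)|$ as you do.
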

\begin{proof}
Using \eqref{EQ:krawabsdist}, we find for each $k \leq n$ that:
\[
\krawnorm_k(t) \geq \krawnorm_k(0) - \frac{2k}{n} \cdot t = 1 - \frac{2k}{n} \cdot t  \quad \forall t \in [0 : n].
\]
Therefore, we have:
\[
g(t) := d - \sum_{k=1}^d \krawnorm_k(t) \leq \sum_{k=1}^d \frac{2k}{n} \cdot t = d(d+1) \cdot (t/n) \quad \forall t \in [0 : n].
\] \qedMP
\end{proof}
\begin{lemma}
\label{LEM:symupboundanalysis}
We have:
\begin{equation}\label{eqgup}
    g^{(r)} - g_{\min} \leq d(d+1) \cdot (\xi_{r+1}^n/ n),
\end{equation}
where $\xi_{r + 1}^n$ is the smallest root of the Krawtchouk polynomial $\kraw_{r+1}(t)$.
\end{lemma}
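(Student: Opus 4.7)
The plan is to combine the linear upper estimator from Lemma~\ref{LEM:kraw2} with the univariate inner-hierarchy analysis recalled in Remark~\ref{REM:linearunivariate}. Since the Lasserre inner bound is monotone under pointwise upper estimators on the support of the measure, and since the Krawtchouk polynomials are precisely the orthogonal polynomials associated with the measure $\omega$ by \eqref{EQ:krawortho}, everything we need is already in place.

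\medskip

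First, I would apply the upper-estimator trick. Set $\widehat{g}(t) := d(d+1)(t/n)$, so that by Lemma~\ref{LEM:kraw2} we have $g(t) \leq \widehat{g}(t)$ for every $t \in [0:n]$ and $\widehat{g}_{\min} = g_{\min} = 0$. For any density $s \in \Sigma_r$ with $\int s\, d\omega = 1$, non-negativity of $s$ on the support of $\omega$ gives
\[
\int g \cdot s\, d\omega \;\leq\; \int \widehat{g} \cdot s\, d\omega,
\]
and taking the infimum over such $s$ yields $g^{(r)} \leq \widehat{g}^{(r)}$. Hence
\[
g^{(r)} - g_{\min} \;\leq\; \widehat{g}^{(r)} - \widehat{g}_{\min}.
\]

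\medskip

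Second, I would analyze $\widehat{g}^{(r)} - \widehat{g}_{\min}$ using Remark~\ref{REM:linearunivariate}. Since $\widehat{g}(t) = c\cdot t$ with $c = d(d+1)/n > 0$, and since by \eqref{EQ:krawortho} the Krawtchouk polynomials $\{\mathcal{K}_k^n\}_{k \in \N}$ form the family of orthogonal polynomials on $[0:n]$ with respect to $\omega$, Theorem~\ref{THM:deKlerkLaurentinner} (applied on any interval $M \supseteq [0:n]$ containing the support of $\omega$) gives the inner Lasserre bound for the polynomial $t \mapsto t$ exactly as the least root of $\mathcal{K}_{r+1}^n$. By Remark~\ref{REM:linearunivariate} this upgrades to
\[
\widehat{g}^{(r)} - \widehat{g}_{\min} \;\leq\; c \cdot \xi_{r+1}^n \;=\; d(d+1)\cdot (\xi_{r+1}^n/n).
\]
Chaining the two inequalities yields the desired bound \eqref{eqgup}.

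\medskip

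I do not expect a substantive obstacle: the only mildly delicate point is to verify that Theorem~\ref{THM:deKlerkLaurentinner}, stated for an interval, still applies when the underlying measure $\omega$ is supported on the discrete set $[0:n]$ (this is the same setting in which the theorem is used in \cite{deKlerkLaurent2018}, and the argument there goes through verbatim because only the three-term recurrence and the orthogonality of $\{\mathcal{K}_k^n\}$ are used). Once this is taken for granted, the proof is just the two-line chain above.
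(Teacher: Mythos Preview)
Your proposal is correct and follows exactly the same route as the paper: the paper's proof is a one-liner invoking Lemma~\ref{LEM:kraw2} and Remark~\ref{REM:linearunivariate} together with the orthogonality~\eqref{EQ:krawortho}, and you have simply unpacked those two steps. Your remark about applying Theorem~\ref{THM:deKlerkLaurentinner} to a discrete measure is a fair caveat, but the paper treats it the same way (implicitly), and indeed only the three-term recurrence and orthogonality are needed.
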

\begin{proof}
This follows immediately from Lemma \ref{LEM:kraw2} and Remark \ref{REM:linearunivariate} at the end of Section \ref{SEC:innerbound}, noting that the Krawtchouk polynomials are indeed orthogonal w.r.t. the measure $\omega$ on $[0:n]$  (cf. \eqref{EQ:krawortho}). \qedMP
\end{proof}

Putting things together, we may prove our main result, Theorem \ref{THM:main}.

\begin{proof}[of Theorem~\ref{THM:main}]
Assume that $r$ is large enough so that ${d(d+1)\cdot (\xi^n_{r+1}/n) \le 1/2}$. By Lemma~\ref{LEM:symupboundanalysis}, we then have 
\[g^{(r)} - g_{\min} \leq d(d+1) \cdot (\xi_{r+1}^n/ n) \leq 1/2. \]
By Theorem \ref{THM:innerlasbound} we are thus able to choose a polynomial $u \in \R[t]_r$ whose associated operator $\kernelop$ satisfies $\kernelop(1)=1$ and 
\[\|\kernelop^{-1}-I\|\le 2\gamma_d \cdot d(d+1) \cdot  (\xi_{r+1}^n/n).\] We may then use Lemma~\ref{lemK0} to obtain Theorem~\ref{THM:main} with constant ${C_d := \gamma_d \cdot d(d+1)}$. \qedMP
\end{proof}


\section{Proof of Theorem \ref{THM:mainup}}
\label{SEC:generalize}

We turn now to analyzing the inner hierarchy $\upbound{r}$ defined in \eqref{EQ:upbound} for a polynomial $f\in \R[x]_d$ on the boolean cube, whose definition is repeated for convenience:
\begin{equation}
\label{EQ:upbound2}
\upbound{r} := \min_{s \in \Sigma[x]_r} \left\{ \int_{\cube{n}} f(x) \cdot s(x) d\mu : \int_{\cube{n}} s(x) d \mu = 1 \right\} \geq \funcmin.
\end{equation}
As before, we may assume w.l.o.g. that $f_{\min} = f(0) = 0$ and that $f_{\max} = 1$. To facilitate the analysis of the bounds $f^{(r)}$, the idea is to restrict in (\ref{EQ:upbound2}) to polynomials $s(x)$ that are invariant under the action of  $\stab(0) \subseteq \Aut(\cube{n})$, i.e., depending only on the Hamming weight $|x|$. Such polynomials are of the form $s(x)=u(|x|)$ for some univariate polynomial $u\in \R[t]$. Hence this leads to 
the following, weaker hierarchy, where we now optimize over  {\em univariate} sums-of-squares:
\begin{equation*}
\upboundsym{r} := \min_{u\in \Sigma[t]_r} \left\{ \int_{\cube{n}} f(x) \cdot u(|x|) d\mu(x) : \int_{\cube{n}} u(|x|) d \mu(x) = 1 \right\}.
\end{equation*}
By definition, we must have $\upboundsym{r} \geq \upbound{r} \geq \funcmin$, and so an analysis of $\upboundsym{r}$ extends immediately to $\upbound{r}$.

The main advantage of working with the hierarchy $\upboundsym{r}$ is that we may now assume that $f$ is itself invariant under $\stab(0)$, after replacing $f$ by its symmetrization:
\[
	\frac{1}{|\stab(0)|}\sum_{\sigma \in \stab(0)} f(\sigma(x)).
\]
Indeed, for any $u \in \Sigma[t]_r$, we have that:
\begin{align*}
    \int_{\cube{n}} f(x) u(|x|) d\mu(x) 
    &= \frac{1}{|\stab(0)|} \sum_{\sigma \in \stab(0)}\int_{\cube{n}} f(\sigma(x)) u(|\sigma(x)|) d\mu(\sigma(x)) \\
    &= \int_{\cube{n}} \frac{1}{|\stab(0)|} \sum_{\sigma \in \stab(0)} f(\sigma(x)) u(|x|) d\mu(x),
\end{align*}
So we now assume  that $f$ is $\stab(0)$-invariant, and thus we may write:
\[
    f(x) = F(|x|) \text{ for some polynomial  } F(t) \in\R[t]_d.
\]
By the definitions of the measures $\mu$ on $\cube{n}$ and $\omega$ on $[0:n]$ we have the identities:
\begin{align*}
\int_{\cube{n}}u(|x|)d\mu(x) &= \int_{[0:n]}u(t)d\omega(t), \\
\int_{\cube{n}} F(|x|) u(|x|) d\mu(x) &= \int_{[0:n]} F(t)u(t)d\omega(t).
\end{align*}
Hence we get
\begin{equation}
\label{EQ:symmetrizedinner}
	\upboundsym{r} = \min_{u \in \Sigma[t]_r} \Big\{ \int_{[0:n]} F(t) \cdot u(t) d\omega(t) : \int _{[0:n]} u(t) d \omega(t) = 1 \Big\} = F_{[0: n], \omega}^{(r)}.
\end{equation}
In other words, the behaviour of the \emph{symmetrized} inner hierarchy $\upboundsym{r}$ over the boolean cube  w.r.t. the uniform measure $\mu$  is captured by the behaviour of the \emph{univariate} inner hierarchy $F_{[0: n], \omega}^{(r)}$ over $[0 : n]$ w.r.t. the discrete measure $\omega$.

Now, we are in a position to make use again of the technique outlined in Section \ref{SEC:innerbound}. First we 
find a linear upper estimator $\widehat F$ for $F$ on $[0:n]$. 

\begin{lemma}
\label{LEM:Flinearupperestimator}
We have
\[
    F(t) \leq \widehat F(t):= d(d+1) \cdot \gamma_d \cdot t/n
     \quad \forall t \in [0 : n],
\]
where $\gamma_d$ is the same constant as in Lemma \ref{LEM:cube}.
\end{lemma}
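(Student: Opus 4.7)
The plan is to expand $F$ in the basis of (normalized) Krawtchouk polynomials and then combine two ingredients: the bound on the coefficients provided by Lemma~\ref{LEM:cube} (since the harmonic components of $f$ inherit $\stab(0)$-invariance and are therefore univariate in $|x|$), and the Lipschitz-type estimate of Lemma~\ref{LEM:krawabsdist} for the functions $\krawnorm_k(t)$ near $t=0$. The normalization $f(0) = \funcmin = 0$ is what forces the expansion to vanish at $t=0$ and allows us to produce a linear-in-$t$ upper estimator.

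First I would write the harmonic decomposition $f = f_0 + f_1 + \cdots + f_d$ (with $f_k \in H_k$). Since $f$ is $\stab(0)$-invariant, averaging each $f_k$ over $\stab(0)$ leaves it unchanged; as the space of $\stab(0)$-invariants in $H_k$ is one-dimensional and spanned by $\charsum_k$, we can write $f_k(x) = b_k \krawnorm_k(|x|)$ for some $b_k \in \R$ (using $\charsum_k(1^t 0^{n-t}) = \kraw_k(t) = \kraw_k(0) \krawnorm_k(t)$ and absorbing the factor $\kraw_k(0)$ into $b_k$). Consequently
\[
  F(t) = \sum_{k=0}^d b_k \krawnorm_k(t), \qquad t \in [0:n].
\]

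Next I would use the hypothesis $F(0) = f(0) = 0$ together with $\krawnorm_k(0) = 1$ to conclude $\sum_{k=0}^d b_k = 0$, so that
\[
  F(t) \;=\; \sum_{k=1}^d b_k \bigl( \krawnorm_k(t) - 1 \bigr).
\]
Evaluating $f_k$ at $x = 0$ gives $b_k = f_k(0)$, hence by Lemma~\ref{LEM:cube} and the normalization $\|f\|_\infty = 1$,
\[
  |b_k| \;\le\; \|f_k\|_\infty \;\le\; \harmbound_d \cdot \|f\|_\infty \;=\; \harmbound_d.
\]
Applying the second bound in Lemma~\ref{LEM:krawabsdist}, $|\krawnorm_k(t) - 1| \le (2k/n) \cdot t$, and summing yields
\[
  F(t) \;\le\; |F(t)| \;\le\; \sum_{k=1}^d \harmbound_d \cdot \frac{2k}{n} \cdot t \;=\; \harmbound_d \cdot \frac{2}{n} \cdot \frac{d(d+1)}{2} \cdot t \;=\; d(d+1) \cdot \harmbound_d \cdot \frac{t}{n},
\]
which is exactly $\widehat{F}(t)$.

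There is no real obstacle here; the argument is an unwinding of the Fourier/Krawtchouk correspondence on $\cube{n}$. The only subtle point is realizing that the coefficients $b_k$ in the expansion of the \emph{univariate} polynomial $F$ coincide (up to the already-absorbed factor $\kraw_k(0)$) with the values $f_k(0)$ of the \emph{multivariate} harmonic components of $f$, which is what lets us import the dimension-free bound $\harmbound_d$ from Lemma~\ref{LEM:cube}. The linearity in $t$ then comes for free from the elementary Lipschitz bound on $\krawnorm_k$ at $0$.
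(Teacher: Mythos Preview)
Your proof is correct and follows essentially the same route as the paper: expand $F$ in the basis $\{\krawnorm_k\}$, use $F(0)=0$ to rewrite $F(t)=\sum_k b_k(\krawnorm_k(t)-1)$, bound each $|b_k|$ by $\gamma_d$ via Lemma~\ref{LEM:cube}, and apply the Lipschitz estimate of Lemma~\ref{LEM:krawabsdist}. The only cosmetic difference is that the paper bounds $|\lambda_i|$ via $\max_t|\lambda_i\krawnorm_i(t)|=\|f_i\|_\infty$, whereas you observe directly that $b_k=f_k(0)\le\|f_k\|_\infty$; both lead to the same conclusion.
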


\begin{proof}
Write $F(t)= \sum_{i=0}^d \lambda_i \krawnorm_i(t) $ for some scalars $\lambda_i$. 
By assumption, ${F(0)=0}$ and thus $\sum_{i=0}^d\lambda_i=0$. We now use an analogous argument as for Lemma \ref{LEM:kraw2}:
\begin{align*}
F(t)=\sum_{i=0}^d \lambda _i (\krawnorm_i(t)-1) &\le \sum_{i=0}^d |\lambda_i| |\krawnorm_i(t)-1| 
\stackrel{\eqref{EQ:krawabsdist}}{\le} \max_i|\lambda_i| \cdot t \cdot \sum_{i=0}^d \frac{2i}{n} \\
&\le  \max_i|\lambda_i| \cdot t \cdot {d(d+1)\over n}.
\end{align*}
As $\|f\|_\infty =1$, using Lemma \ref{LEM:cube}, we can conclude that: $$|\lambda_i|=\max_{t\in [0:n]} |\lambda_i\krawnorm_i(t)| \le \gamma_d$$ which gives the desired result. \qedMP
\end{proof}

In light of Remark \ref{REM:linearunivariate} in Section~\ref{SEC:innerbound}, we may now conclude that 
\[
F_{[0: n], \omega}^{(r)} \leq d(d+1) \harmbound_d \cdot \xi_{r+1}^n/n.
\]
As $\upbound{r} \leq \upboundsym{r} = F_{[0: n], \omega}^{(r)}$, we have thus shown Theorem~\ref{THM:mainup} with constant ${C_d = d(d+1) \harmbound_d}$. Note that in comparison to Lemma \ref{LEM:symupboundanalysis}, we only have the additional constant factor $\gamma_d$.


\subsection*{Exactness of the inner hierarchy}
As is the case for the outer hierarchy, the inner hierarchy is exact when $r$ is large enough. Whereas the outer hierarchy, however, is exact for $r \geq (n+d-1)/2$, the inner hierarchy is exact in general if and only if $r \geq n$. We give a short proof of this fact below, for reference.

\begin{lemma}
Let $f$ be a polynomial on $\cube{n}$. Then $\upbound{r} = \funcmin~$ for all $r \geq n$.
\end{lemma}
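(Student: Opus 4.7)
The inequality $f^{(r)} \geq \funcmin$ is built into the definition of the inner hierarchy, so only the reverse inequality requires work. Moreover, since $\Sigma_n \subseteq \Sigma_r$ whenever $r \geq n$, it suffices to exhibit a single admissible density $s \in \Sigma_n$ whose expected value of $f$ with respect to $\mu$ equals $\funcmin$; then $f^{(n)} \leq \funcmin$ and the monotonicity $f^{(r)} \leq f^{(n)}$ handles every $r \geq n$.

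The plan is to build a density concentrated on a single minimizer. Let $x^\ast \in \cube{n}$ attain $f(x^\ast) = \funcmin$, and consider the univariate-factor product
\[
    p(x) := \prod_{i \,:\, x^\ast_i = 1} x_i \cdot \prod_{i \,:\, x^\ast_i = 0} (1 - x_i) \in \R[x]_n.
\]
Evaluated on $\cube{n}$, $p$ is the $\{0,1\}$-indicator of the singleton $\{x^\ast\}$. I would then take
\[
    s(x) := 2^n \cdot p(x)^2 \in \Sigma_n,
\]
which is a sum of squares of degree at most $2n$, hence in $\Sigma_n$.

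The verification that $s$ certifies $f^{(n)} \leq \funcmin$ is then a one-line calculation, using that $x_i^2 = x_i$ and $(1-x_i)^2 = 1-x_i$ on $\cube{n}$, so that $p(x)^2 = p(x)$ pointwise on the cube. This gives
\[
    \int_{\cube{n}} s \, d\mu = 2^n \cdot \frac{1}{2^n}\sum_{x \in \cube{n}} p(x) = 1,
\]
and
\[
    \int_{\cube{n}} f \cdot s \, d\mu = \sum_{x \in \cube{n}} f(x) \, p(x) = f(x^\ast) = \funcmin.
\]
Thus $s$ is a feasible point of the program defining $f^{(n)}$ with objective value $\funcmin$, which together with $f^{(r)} \leq f^{(n)}$ for $r \geq n$ and $f^{(r)} \geq \funcmin$ completes the proof.

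There is essentially no real obstacle here; the only thing one must notice is that the multilinear indicator of a vertex of $\cube{n}$ is already a square after reducing modulo the ideal $\langle x_i^2 - x_i \rangle$, so it lands in $\Sigma_n$ rather than in $\Sigma_{2n}$. Everything else is bookkeeping with the uniform measure.
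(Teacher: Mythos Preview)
Your proof is correct and essentially identical to the paper's: both construct the squared interpolation polynomial at a minimizer (the paper first reduces to $x^\ast=0$ by symmetry and writes $s(x)=\sqrt{2^n}\prod_i(1-x_i)$, while you keep a general $x^\ast$ and write the same thing as $2^n p(x)^2$). The only extra content you add is the explicit monotonicity $\upbound{r}\le\upbound{n}$ for $r\ge n$, which the paper leaves implicit.
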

\begin{proof}
We may assume w.l.o.g. that $f(0) = \funcmin$. Consider the interpolation polynomial:
\[
s(x) := \sqrt{2^n} \prod_{i=1}^n (1-x_i) \in \R[x]_n,
\]
which satisfies $s^2(0) = 2^n$ and $s^2(x) = 0$ for all $0 \neq x \in \cube{n}$. Clearly, we have:
\[
    \int f s^2 d\mu = f(0) = \funcmin\  \text{ and }\  \int s^2 d\mu = 1,
\]
and so $\upbound{n} = \funcmin$. \qedMP
\end{proof}

The next lemma shows that this result is tight, by giving an example of polynomial $f$ for which the bound $\upbound{r}$ is exact only at order $r=n$.

\begin{lemma}
Let $f(x) = |x|=x_1+\ldots+x_n$. Then $\upbound{r} - \funcmin > 0$ \ for all $r < n$.
\end{lemma}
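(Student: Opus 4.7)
The plan is to reduce to the univariate inner bound studied in Section~\ref{SEC:innerbound} and then appeal to the classical interlacing theorem for zeros of orthogonal polynomials. Since no bit-flip other than the identity fixes $0\in\cube{n}$, the stabilizer $\stab(0)\subseteq\Aut(\cube{n})$ consists exactly of the coordinate permutations $\Sym(n)$, and $f(x)=|x|$ is manifestly $\stab(0)$-invariant. In particular $\funcmin=f(0)=0$.

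My first step would be to strengthen the inequality $\upbound{r}\le\upboundsym{r}$ from Section~\ref{SEC:generalize} to an \emph{equality} in this $\stab(0)$-invariant setting. Given any feasible $s\in\Sigma_r$ in~\eqref{EQ:upbound2}, its orbit-average $\bar s = \tfrac{1}{|\stab(0)|}\sum_{\sigma\in\stab(0)} s\circ\sigma$ is again a sum of squares of polynomials of degree $\le r$ (a convex combination of sums of squares), depends only on $|x|$, and satisfies both $\int \bar s\,d\mu = \int s\,d\mu$ and, by the $\stab(0)$-invariance of $f$, $\int f\,\bar s\,d\mu = \int f\,s\,d\mu$. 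Thus $\upbound{r} = \upboundsym{r}$, which by~\eqref{EQ:symmetrizedinner} equals $F^{(r)}_{[0:n],\omega}$ with linear objective $F(t)=t$.

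Next I would apply Theorem~\ref{THM:deKlerkLaurentinner} with $M=[0:n]$, $\nu=\omega$, and $g(t)=t$; using that the Krawtchouk polynomials form the associated orthogonal family by~\eqref{EQ:krawortho}, this identifies $\upbound{r}$ with the smallest root $\xi_{r+1}^n$ of $\kraw_{r+1}$. To conclude, I would invoke the classical fact that if a positive measure is supported on $N+1$ distinct real points, then for every $k\le N$ the orthogonal polynomial of degree $k$ has all its zeros strictly inside the open convex hull of the support. The measure $\omega$ is supported on the $n+1$ points $\{0,1,\dots,n\}\subseteq[0,n]$, and for $r<n$ we have $r+1\le n$, so $\xi_{r+1}^n\in(0,n)$; in particular $\upbound{r}=\xi_{r+1}^n > 0 = \funcmin$, as required.

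I do not anticipate any serious difficulty, since all the necessary tools are already established in the paper. The one subtlety worth flagging is the upgrade from $\upbound{r}\le\upboundsym{r}$ to equality: without it, Theorem~\ref{THM:deKlerkLaurentinner} would only yield an upper bound on $\upbound{r}$, which is the wrong direction for establishing strict positivity.
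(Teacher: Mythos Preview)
Your argument has a genuine gap at the upgrade from $\upbound{r}\le\upboundsym{r}$ to equality. The symmetrization $\bar s$ is indeed a $\stab(0)$-invariant element of the \emph{multivariate} cone $\Sigma_r$, and hence a function of $|x|$; but the feasible set of $\upboundsym{r}$ consists of densities $u(|x|)$ with $u$ a \emph{univariate} sum of squares in $\Sigma[t]_r$, and these two conditions are not the same. Concretely, on $\cube{2}$ the polynomial $(x_1-x_2)^2\in\Sigma_1$ is $\stab(0)$-invariant with values $0,1,0$ at $|x|=0,1,2$, yet no $u(t)=\sum_i(a_it+b_i)^2$ of degree $\le 2$ can match these values (since $u(0)=0$ forces all $b_i=0$, whence $u(2)=4u(1)$). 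Thus your symmetrization only shows that $\upbound{r}$ equals its restriction to $\stab(0)$-invariant multivariate densities---which is tautological---and not that it equals $\upboundsym{r}$. Theorem~\ref{THM:deKlerkLaurentinner} therefore still yields only $\upbound{r}\le\xi_{r+1}^n$, the wrong direction, exactly as you flag in your last paragraph.

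For comparison, the paper sidesteps all of this with a two-line direct argument: if $\upbound{r}=0$, then the optimal density (which may be taken to be a single square $p^2$ with $p\in\R[x]_r$) must vanish at every nonzero point of $\cube{n}$; hence $p$ itself is an interpolation polynomial at $0$ of degree at most $r<n$, a contradiction. (As an aside, the identity $\upbound{r}=\xi_{r+1}^n$ you are aiming for \emph{is} actually true for $f(x)=|x|$, but for a reason specific to this $f$: the dual moment matrix is $(n/2-\lambda)I-\tfrac12 A$ with $A$ the adjacency matrix of the induced Hamming ball of radius $r$, and Perron--Frobenius forces the top eigenvector of $A$ to be $\stab(0)$-invariant, so the binding block in the symmetry reduction is the trivial one. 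That, however, is a different and longer argument than the one you gave.)
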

\begin{proof}
Suppose not. That is, $f^{(r)}= f_{\min}=0$ for some $r\le n-1$. 
As ${f(x) > 0 = \funcmin}$ for all $0 \neq x \in \cube{n}$, this implies that there exists a polynomial $s \in \R[x]_r$ such that $s^2$ is interpolating at $0$, i.e. such that $s^2(0) = 1$ and $s^2(x) = 0$ for all $0 \neq x \in \cube{n}$. But then $s$ is itself interpolating at $0$ and has degree $r < n$, a contradiction. \qedMP
\end{proof}

\section{Proof of Lemma \ref{LEM:cube}}
\label{APP:harmcompbound}

In this section we give a proof of Lemma \ref{LEM:cube}, where we bound the sup-norm $\| p_k \|_{\infty}$ of the harmonic components $p_k$ of a polynomial $p$ 
by $\gamma_d \|p\|_\infty$ for some constant $\gamma_d$ depending only on the degree $d$ of $p$. 
The following definitions will be convenient.
\begin{definition}
For $n \geq  d \geq k \geq 0$ integers, we write:
\begin{align*}
    \rho(n, d, k) &:= \sup \{\|p_k\|_\infty : p = p_0 + p_1 + \dots + p_d \in \redpolys{x}_d, \|p\|_{\infty} \leq 1 \}, \text{ and }\\
    \rho(n, d) &:= \max_{0 \leq k \leq d} \rho(n, d, k).
\end{align*}
We are thus interested in finding a bound $\harmbound_d$ depending only on $d$ such that:
\begin{equation}\label{EQ:gammad}
\harmbound_d \geq \rho(n, d) \text{ for all } n \in \N.
\end{equation}
\end{definition}
\noindent We will now show that in the computation of the parameter $\rho(n,d,k)$ we may restrict to feasible solutions $p$ having strong structural properties.
 First, we show that we may assume that the sup-norm of the harmonic component $p_k$ of $p$ is attained at $0 \in \cube{n}$.

\begin{lemma}
\label{LEM:claimsimp1}
We have
\begin{equation}
\label{EQ:claimsimp1}
    \rho(n, d, k) = \sup_{p \in \redpolys{x}_d^n} \left\{p_k(0) : \|p\|_\infty \leq 1 \right\}    
\end{equation}
\end{lemma}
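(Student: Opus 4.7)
The plan is to exploit the translation symmetry of the boolean cube coming from the bit-flip automorphisms $\tau_{x^*} : x \mapsto x \oplus x^*$ for $x^* \in \cube{n}$. The inequality
\[
\rho(n,d,k) \ \geq\ \sup_{p \in \redpolys{x}_d}\bigl\{ p_k(0) : \|p\|_\infty \leq 1\bigr\}
\]
is immediate from $p_k(0) \leq \|p_k\|_\infty$. For the reverse inequality, I would take any feasible $p \in \redpolys{x}_d$ with $\|p\|_\infty \leq 1$ and pick $x^* \in \cube{n}$ attaining $|p_k(x^*)| = \|p_k\|_\infty$, and then construct a new feasible polynomial $p'$ with $p'_k(0) = \|p_k\|_\infty$.

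The key observation is that each harmonic subspace $H_k$ is invariant under $\tau_{x^*}$. Indeed, from the multiplicative identity $\chi_a(x \oplus x^*) = \chi_a(x^*)\,\chi_a(x)$ one gets that if $p = \sum_a \widehat p(a) \chi_a$, then
\[
(p \circ \tau_{x^*})(x) \ =\ \sum_{a \in \cube{n}} \chi_a(x^*)\,\widehat p(a)\,\chi_a(x),
\]
so $p \circ \tau_{x^*}$ is obtained simply by multiplying each Fourier coefficient by $\pm 1$. Consequently, writing $q := p \circ \tau_{x^*}$, its harmonic decomposition is $q = q_0 + q_1 + \ldots + q_d$ with $q_i = p_i \circ \tau_{x^*} \in H_i$. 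In particular $q_k(0) = p_k(x^*) = \pm \|p_k\|_\infty$, and $\|q\|_\infty = \|p\|_\infty \leq 1$ since $\tau_{x^*}$ is a bijection on $\cube{n}$.

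Finally, by replacing $q$ by $-q$ if necessary (which preserves the sup-norm constraint and the harmonic decomposition up to a global sign), I can ensure $q_k(0) = +\|p_k\|_\infty$, giving a feasible polynomial with $q_k(0) = \|p_k\|_\infty$. Taking the supremum over feasible $p$ then yields $\rho(n,d,k) \leq \sup\{p'_k(0) : \|p'\|_\infty \leq 1\}$, proving (\ref{EQ:claimsimp1}). I do not foresee any real obstacle in carrying this out: the entire argument reduces to the fact that bit-flip translations act on Fourier coefficients by sign changes and hence stabilise each $H_k$ setwise.
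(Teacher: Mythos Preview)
Your proposal is correct and follows essentially the same approach as the paper: both use an automorphism of $\cube{n}$ sending $0$ to the point where $|p_k|$ is maximized, together with the $\Aut(\cube{n})$-invariance of $H_k$, to move the maximum of $p_k$ to the origin. The paper phrases this abstractly as ``choose $\sigma \in \Aut(\cube{n})$ with $\sigma(0) = x$'', whereas you explicitly take the bit-flip $\tau_{x^*}$ and spell out the Fourier-coefficient mechanism; this is a minor presentational difference, not a different method.
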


\begin{proof}
Let $p$ be a feasible 
solution for $\rho(n, d, k)$ and let $x \in \cube{n}$ for which $p_k(x) = \|p_k\|_\infty$ (after possibly replacing $p$ by $-p$). Now choose  $\sigma \in \Aut(\cube{n})$ such that $\sigma(0) = x$ and set $\widehat p = p \circ \sigma$. Clearly, $\widehat p$ is again a feasible solution for $\rho(n, d, k)$. Moreover,  as $H_k$ is invariant under $\Aut(\cube{n})$, we have:
\[
\|\widehat p_k\|_\infty= \widehat p_k(0) = (p \circ \sigma)_k(0) = (p_k \circ \sigma)(0) = \|p_k\|_\infty,
\]
which shows the lemma. \qedMP 
\end{proof}
\noindent
Next we show that we may in addition restrict to polynomials that are highly symmetric.

\begin{lemma}
\label{LEM:claimsimp2}
In the program   \eqref{EQ:claimsimp1}  we may restrict the optimization to polynomials of the form
\[
    p(x) = \sum_{i = 0}^d \lambda_i \sum_{|a| = i} \chi_a(x) = \sum_{i = 0}^d \lambda_i \kraw_i(|x|)\quad \text{ where } \lambda_i\in \R.
\]
\end{lemma}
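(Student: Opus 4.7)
The plan is a standard symmetrization argument, exploiting the fact that the $\stab(0)$-invariant subspace of each $H_i$ is one-dimensional.

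Given a feasible $p = p_0 + p_1 + \cdots + p_d \in \redpolys{x}_d$ with $\|p\|_\infty \le 1$ achieving value $p_k(0)$ in the program \eqref{EQ:claimsimp1}, I would first symmetrize over the stabilizer $\stab(0) \subseteq \Aut(\cube{n})$, setting
\[
    \tilde p(x) := \frac{1}{|\stab(0)|} \sum_{\sigma \in \stab(0)} p(\sigma(x)).
\]
Since $\tilde p$ is a convex combination of functions of sup-norm at most $1$, we have $\|\tilde p\|_\infty \le 1$, and $\deg \tilde p \le d$, so $\tilde p$ remains feasible. Because each subspace $H_i$ is $\Aut(\cube{n})$-invariant, the harmonic decomposition commutes with the averaging operator, giving $\tilde p_i(x) = \frac{1}{|\stab(0)|}\sum_{\sigma} p_i(\sigma(x))$. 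Evaluating at $0$ and using $\sigma(0) = 0$ for all $\sigma \in \stab(0)$, we obtain $\tilde p_k(0) = p_k(0)$, so the objective value is preserved.

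Next, the polynomial $\tilde p$ is by construction $\stab(0)$-invariant, and hence so is each harmonic component $\tilde p_i \in H_i$. As recalled just after \eqref{EQ:charsum}, the subspace of $\stab(0)$-invariant functions in $H_i$ is one-dimensional, spanned by the zonal spherical function $\charsum_i(x) = \sum_{|a|=i} \chi_a(x)$. Therefore $\tilde p_i = \lambda_i \charsum_i$ for some scalar $\lambda_i \in \R$.

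Finally, to bring the formula into the claimed shape, I would note that $\charsum_i$ is itself $\stab(0)$-invariant, so for any $x \in \cube{n}$ there exists $\sigma \in \stab(0)$ sending $x$ to $1^{|x|} 0^{n-|x|}$, yielding $\charsum_i(x) = \charsum_i(1^{|x|} 0^{n-|x|}) = \kraw_i(|x|)$ by \eqref{EQ:xt} of Lemma~\ref{lemxt}. Combining, we get
\[
    \tilde p(x) = \sum_{i=0}^d \lambda_i \charsum_i(x) = \sum_{i=0}^d \lambda_i \kraw_i(|x|),
\]
which is the desired form, and it still attains the value $p_k(0)$ in \eqref{EQ:claimsimp1}. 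The argument is essentially routine; there is no real obstacle, the only point to be careful about is that the harmonic projection $p \mapsto p_i$ commutes with the $\Aut(\cube{n})$-action, which is immediate from the $\Aut(\cube{n})$-invariance of the spaces $H_i$.
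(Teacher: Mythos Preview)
Your proof is correct and follows essentially the same symmetrization argument as the paper: average over $\stab(0)$, observe that feasibility and the objective value $p_k(0)$ are preserved, and use that the $\stab(0)$-invariant part of each $H_i$ is spanned by $\charsum_i$. If anything, you are slightly more explicit than the paper in justifying why the harmonic projection commutes with the group action and why $\charsum_i(x) = \kraw_i(|x|)$.
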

\begin{proof}
Let $p$ be a feasible solution to \eqref{EQ:claimsimp1}. 
Consider the following polynomial $\widehat p$ obtained as symmetrization of $p$ under action of $\stab(0)$, the set of automorphism of $\cube{n}$ corresponding to the coordinate permutations:
\[
    \widehat p(x) = \frac{1}{|\stab(0)|} \sum_{\sigma \in \stab(0)} (p \circ \sigma) (x).
\]
Then $\|\widehat p\|_\infty\le 1$  and $\widehat p_k(0) = p_k(0)$, so $\widehat p$ is still feasible for \eqref{EQ:claimsimp1} and has the same objective value as $p$. Furthermore,  for each $i$, $\widehat p_i$ is invariant under $\stab(0)$, which implies that $\widehat p_i(x) = \lambda_i \charsum_i(x) = \lambda_i \sum_{|a| = i} \chi_a(x)=\lambda_i \kraw_i(|x|)$ for some $\lambda_i \in \R$ (see \eqref{EQ:charsum}). \qedMP
\end{proof}
A simple rescaling $\lambda_i \gets \lambda_i \cdot {n \choose i}$ allows us to switch from $\kraw_i$ to $\krawnorm_i=\kraw_i/{n\choose i}$
and to obtain the following reformulation of $\rho(n, d, k)$ as a linear program.
\begin{lemma}
For any $n\ge d\ge k$ we have:
\begin{equation}
\label{EQ:primal}
    \begin{split}
    \rho(n, d, k) 
    = \max \quad &\lambda_k \\
    s.t. \quad & -1 \leq \sum_{i=0}^d \lambda_i \krawnorm_i(t) \leq 1 \quad (t=0,1, \dots, n).  
    \end{split}
\end{equation}
\end{lemma}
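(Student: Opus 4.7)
The plan is to combine the two preceding lemmas and then perform the indicated change of variables. By Lemma \ref{LEM:claimsimp1}, we may write
\[
\rho(n,d,k) = \sup_{p \in \redpolys{x}_d} \{ p_k(0) : \|p\|_\infty \le 1 \},
\]
and by Lemma \ref{LEM:claimsimp2} the optimization can be restricted to polynomials of the form $p(x) = \sum_{i=0}^d \mu_i \kraw_i(|x|)$ for some coefficients $\mu_i \in \R$. The point of this restricted form is that both the objective and the feasibility constraint become expressible purely in terms of the univariate variable $|x| \in [0:n]$.

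Next I would unpack the objective and the constraint under this parametrization. Since $p$ is $\stab(0)$-invariant, its $k$-th harmonic component is simply $p_k(x) = \mu_k \kraw_k(|x|)$, so at $0$ we get $p_k(0) = \mu_k \kraw_k(0) = \mu_k \binom{n}{k}$. For the constraint, as $p(x)$ depends only on $|x|$, the requirement $\|p\|_\infty \le 1$ on $\cube{n}$ is equivalent to $\left| \sum_{i=0}^d \mu_i \kraw_i(t)\right| \le 1$ for every $t \in \{0,1,\dots,n\}$, i.e. a finite set of linear inequalities in the $\mu_i$.

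Finally I would apply the rescaling $\lambda_i := \mu_i \binom{n}{i}$ announced in the statement. Using $\kraw_i = \binom{n}{i} \krawnorm_i$, this rescaling turns $\mu_i \kraw_i(t)$ into $\lambda_i \krawnorm_i(t)$ and turns the objective $\mu_k \binom{n}{k}$ into $\lambda_k$. The resulting program is exactly \eqref{EQ:primal}.

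This argument is essentially bookkeeping once Lemmas \ref{LEM:claimsimp1} and \ref{LEM:claimsimp2} are in place; there is no real obstacle, the main thing being just to track the normalization carefully so that the rescaling identifies the objective with a single coefficient $\lambda_k$ and replaces $\kraw_i$ by its normalized version $\krawnorm_i$.
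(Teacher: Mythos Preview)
Your proof is correct and follows exactly the approach the paper intends: the paper gives no separate proof for this lemma, simply remarking before the statement that ``a simple rescaling $\lambda_i \gets \lambda_i \cdot {n \choose i}$ allows us to switch from $\kraw_i$ to $\krawnorm_i$'' after applying Lemmas~\ref{LEM:claimsimp1} and~\ref{LEM:claimsimp2}. Your write-up fills in precisely this bookkeeping, with the normalization tracked correctly.
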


\subsection*{Limit functions}
The idea now is to prove a bound on $\rho(n, d, d)$ 
which holds for  fixed $d$ and is  independent of $n$. We will do this  by considering `the limit' of problem \eqref{EQ:primal} as $n \to \infty$. 
For each $k \in \N$, we define the limit function: 
\begin{equation*}
    \krawlim_k(t) := \lim_{n \to \infty} \krawnorm_{k}(nt),
\end{equation*}
which, as shown in Lemma \ref{lemlimit} below, is in fact a polynomial. We first present  the polynomial $\krawlim_k(t)$ for small $k$ as an illustration.
\begin{example}
We have:
\begin{align*}
    \krawnorm_{0}(nt) = 1 &\implies \krawlim_{0}(t) = 1, &\\
    \krawnorm_{1}(nt) = -2t + 1 &\implies \krawlim_{1}(t) = -2t + 1, &\\
    \krawnorm_{2}(nt) = \frac{2n^2t^2 - 2n^2t + {n \choose 2}}{{n \choose 2}} &\implies \krawlim_{2}(t) = 4t^2 - 4t + 1=(1-2t)^2.& \\
\end{align*}
\end{example}

\begin{lemma}\label{lemlimit}
We have: $\krawlim_k(t) = (1 - 2t)^k$ for all $k \in \N$.
\end{lemma}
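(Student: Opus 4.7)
The plan is direct: evaluate the limit term-by-term in the defining sum for $\mathcal{K}^n_k$ and apply the binomial theorem.

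Recall $\widehat{\mathcal{K}}^n_k(s) = \mathcal{K}^n_k(s)/\binom{n}{k}$ and that for any fixed $i$ the expression $\binom{s}{i} = s(s-1)\cdots(s-i+1)/i!$ is a polynomial in $s$, so substitution $s=nt$ makes sense for arbitrary real $t$. Writing
\begin{equation*}
\widehat{\mathcal{K}}^n_k(nt) \;=\; \frac{1}{\binom{n}{k}}\sum_{i=0}^k (-1)^i \binom{nt}{i}\binom{n-nt}{k-i},
\end{equation*}
I would analyze each factor asymptotically as $n\to\infty$ with $t$ fixed. Since
\begin{equation*}
\binom{nt}{i}=\frac{(nt)^i}{i!}\bigl(1+O(1/n)\bigr),\qquad \binom{n-nt}{k-i}=\frac{(n(1-t))^{k-i}}{(k-i)!}\bigl(1+O(1/n)\bigr),
\end{equation*}
and $\binom{n}{k} = n^k/k!\cdot (1+O(1/n))$, the $i$-th term in the sum, after dividing by $\binom{n}{k}$, tends to
\begin{equation*}
(-1)^i\cdot\frac{k!}{i!(k-i)!}\cdot t^i(1-t)^{k-i}=\binom{k}{i}(-t)^i(1-t)^{k-i}.
\end{equation*}

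Summing over $i$ and applying the binomial theorem, the limit equals
\begin{equation*}
\sum_{i=0}^k \binom{k}{i}(-t)^i(1-t)^{k-i}=\bigl((1-t)+(-t)\bigr)^k=(1-2t)^k,
\end{equation*}
which is exactly the claim. Since each of the $k+1$ terms in the (fixed-length) sum converges, interchanging limit and summation is trivial here, so the only step requiring any care is keeping track of the $O(1/n)$ error terms in the three binomial asymptotics; this is entirely routine. No step presents a genuine obstacle.
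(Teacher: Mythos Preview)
Your proof is correct, and it takes a genuinely different route from the paper's. The paper does not compute the limit directly; instead it invokes the three-term recurrence
\[
(k+1)\mathcal{K}^n_{k+1}(t) = (n-2t)\mathcal{K}^n_k(t) - (n-k+1)\mathcal{K}^n_{k-1}(t),
\]
evaluates it at $nt$, normalizes, and lets $n\to\infty$ to obtain the recursion $\widehat{\mathcal{K}}^\infty_{k+1}(t) = (1-2t)\widehat{\mathcal{K}}^\infty_k(t)$, which together with the base cases $\widehat{\mathcal{K}}^\infty_0=1$ and $\widehat{\mathcal{K}}^\infty_1=1-2t$ gives the claim by induction. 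Your approach is more self-contained: it uses nothing beyond the defining sum, the asymptotic $\binom{cn}{j}=c^jn^j/j!+O(n^{j-1})$ for fixed $c$ and $j$, and the binomial theorem. The recurrence approach, on the other hand, avoids any error-term bookkeeping and transfers readily to other orthogonal families (indeed, the paper reuses the same idea for the $q$-ary Krawtchouk polynomials in Appendix~A). One small remark: phrasing the binomial asymptotics as a \emph{relative} $1+O(1/n)$ error is slightly awkward at $t=0$ or $t=1$, where some factors vanish; it is cleaner to say $\binom{nt}{i}=t^in^i/i!+O(n^{i-1})$ as an absolute estimate, which handles all $t$ uniformly.
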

\begin{proof}
The Krawtchouk polynomials satisfy the following three-term recurrence relation (see, e.g., \cite{MacwilliamsSloane1983}):
\[
	(k+1)\kraw_{k+1}(t) = (n - 2t) \kraw_{k}(t) - (n - k + 1) \kraw_{k-1}(t)
\]
for $1\le k\le n-1$. By evaluating the polynomials at $nt$ we obtain:
\begin{align*}
	     &(k+1)\kraw_{k+1}(nt) = (n - 2nt) \kraw_{k}(nt) - (n - k + 1) \kraw_{k-1}(nt), \\
\implies \quad &(k+1){n \choose k+1}\krawnorm_{k+1}(nt) = (n - 2nt) {n \choose k}\krawnorm_{k}(nt) - (n - k + 1){n \choose k-1} \krawnorm_{k-1}(nt), \\
\implies \quad &\krawnorm_{k+1}(nt) = {n(1 - 2t)\over (n-k)} \cdot \krawnorm_{k}(nt) - {k\over n-k}  \cdot \krawnorm_{k-1}(nt), \\
\implies \quad &\krawlim_{k+1}(t) = (1-2t) \krawlim_k(t).
\end{align*}
As $\krawlim_{0}(t) = 1$ and $\krawlim_{1}(t) = 1 - 2t$ we can conclude that indeed ${\krawlim_k(t) = (1 - 2t)^k}$ for all $k \in \N$. \qedMP
\end{proof}
Next, we show that solutions to \eqref{EQ:primal} remain feasible after increasing the dimension $n$.

\begin{lemma}
\label{LEM:solextension}
Let $\lambda = (\lambda_0, \lambda_1, \dots, \lambda_d)$ be a feasible solution to $\eqref{EQ:primal}$ for a certain $n \in \N$. Then it is also feasible to $\eqref{EQ:primal}$ for $n+1$ (and thus for any $n'\ge n+1$).
Therefore, $\rho(n+1,d,k) \ge \rho(n,d,k)$ for all $n\ge d\ge k$ and thus $\rho(n+1,d)\ge \rho(n,d)$ for all $n\ge d$.
\end{lemma}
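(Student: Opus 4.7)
The plan is to lift a feasible solution at level $n$ to one at level $n+1$ by extending the associated invariant polynomial to an extra coordinate and then averaging. Given feasible $\lambda=(\lambda_0,\dots,\lambda_d)$ at level $n$, set $g(t):=\sum_{i=0}^d \lambda_i \krawnormnosup_i^n(t)$ and consider the $\Sym(n)$-invariant polynomial $p(x) := g(|x|)$ on $\cube{n}$; feasibility is precisely $\|p\|_\infty = \max_{t\in[0:n]}|g(t)| \le 1$. First I would extend $p$ trivially to $\tilde p(x_1,\ldots,x_{n+1}) := p(x_1,\ldots,x_n)$ on $\cube{n+1}$ (preserving the sup-norm), and then symmetrize over $\Sym(n+1)$ to obtain $P(x):=\tfrac{1}{(n+1)!}\sum_{\sigma \in \Sym(n+1)} \tilde p(\sigma(x))$. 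Since averaging cannot increase the sup-norm, $\|P\|_\infty \le 1$; and $P$ is $\Sym(n+1)$-invariant, so $P(x)=h(|x|)$ for some $h\in\R[t]_d$ with $|h(t)|\le 1$ on $[0:n+1]$.

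Unpacking the symmetrization pointwise at $x\in\cube{n+1}$ with $|x|=t$ gives the clean identity
\[
h(t) \;=\; \frac{1}{n+1}\sum_{j=1}^{n+1} g(t-x_j) \;=\; \frac{t\cdot g(t-1)+(n+1-t)\cdot g(t)}{n+1},
\]
which makes the bound $|h(t)|\le 1$ transparent since each $h(t)$ is a convex combination of values of $g$ in $[-1,1]$ (the endpoint cases $t=0$ and $t=n+1$ put coefficient $0$ on the ``missing'' value).

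The crux is then to check that $h=\sum_i \lambda_i \krawnormnosup_i^{n+1}$, i.e.\ that the \emph{same} vector $\lambda$ reappears when $h$ is expanded in the Krawtchouk basis with parameter $n+1$. I would verify this via a short Fourier computation: in the character basis of $\cube{n}$ we have $p = \sum_{a\in\cube{n}} (\lambda_{|a|}/\binom{n}{|a|})\chi_a$; the trivial extension embeds these as those characters $\chi_a$ on $\cube{n+1}$ with $a_{n+1}=0$; and the $\Sym(n+1)$-average replaces each such $\chi_a$ by the zonal spherical function $\binom{n+1}{|a|}^{-1}\sum_{|b|=|a|}\chi_b = \krawnormnosup_{|a|}^{n+1}(|\cdot|)$ on $\cube{n+1}$ (by \eqref{EQ:charsum} and Lemma~\ref{lemxt}). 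Counting the $\binom{n}{k}$ lifted characters of weight $k$ cancels the normalization factors and recovers exactly the coefficient $\lambda_k$ for each $k$. This combinatorial bookkeeping is the main obstacle: everything else is routine, but one must check that the binomial factors from symmetrization and from the Krawtchouk normalization cancel precisely. Once it is verified, feasibility at level $n+1$, and hence the monotonicity statements $\rho(n+1,d,k)\ge \rho(n,d,k)$ and $\rho(n+1,d)\ge \rho(n,d)$, follow at once.
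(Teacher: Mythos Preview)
Your proposal is correct and follows essentially the same approach as the paper: both extend the invariant polynomial trivially from $\cube{n}$ to $\cube{n+1}$, symmetrize over $\Sym(n+1)=\stab(0)$, and verify via the character expansion that the symmetrization of $\krawnormnosup_i^n$ (viewed on $\cube{n+1}$) is exactly $\krawnormnosup_i^{n+1}$, so that the same $\lambda$ reappears. Your explicit convex-combination formula $h(t)=\frac{t\,g(t-1)+(n+1-t)\,g(t)}{n+1}$ is a pleasant bonus that makes feasibility at level $n+1$ immediate without invoking the Fourier identity, but the overall argument is the same as the paper's.
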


\begin{proof}
We may view $\cube{n}$ as a subset of $\cube{n+1}$ via the map $a \mapsto (a,0)$, and analogously $\redpolys{x_1,\ldots,x_n}$ as a subset of $ \redpolys{x_1,\ldots,x_n,x_{n+1}}$ via $\chi_a \mapsto \chi_{(a,0)}$. Now for $m, i \in \N$ we consider again the zonal spherical harmonic \eqref{EQ:charsum}:
\[
    \charsum_i^m = \sum_{|a|= i, a \in \cube{m}} \chi_a.
\]
Consider the set $\stab(0) \subseteq \Aut(\cube{n+1})$ of automorphisms fixing $0 \in \cube{n+1}$, i.e., the coordinate permutations arising from $\sigma\in \Sym(n+1)$. We will use the following identity:
\begin{equation}
\label{EQ:solextension}
\frac{1}{|\stab(0)|} \sum_{\sigma \in \stab(0)} \frac{X_i^n}{{n \choose i}} \circ \sigma = \frac{X_i^{n+1}}{{n+1 \choose i}}.
\end{equation}
To see that \eqref{EQ:solextension} holds note that its left hand side is equal to
$${1\over (n+1)!{n\choose i}} \sum_{\sigma\in \Sym(n+1)} \sum_{a\in \cube{n}, |a|=i} \chi_{(a,0)}\circ \sigma
= {1\over (n+1)!{n\choose i}}\sum_{b\in \cube{n+1}, |b|=i} N_b \chi_b,
$$
where $N_b$ denotes the number of pairs $(\sigma,a)$ with $\sigma\in \Sym(n+1)$, $a\in \cube{n}$, $|a|=i$ such that $b=\sigma(a,0)$.
As there are $n\choose i$ choices for $a$ and $i!(n + 1 - i)!$ choices for $\sigma$ we have $N_b= {n\choose i}i!(n + 1 - i)!$ and thus \eqref{EQ:solextension} holds.

Assume $\lambda$ is a feasible solution of \eqref{EQ:primal} for a given value of $n$. Then, in view of \eqref{EQ:krawcharacter}, this means
\[
    \Big|\sum_{i=0}^d \lambda_i \cdot \frac{X_i^n(x)}{{n \choose i}}\Big| \leq 1 \quad \text{ for all } x\in \cube{n}, \quad \text{ and thus  for all } x \in \cube{n + 1}.
\]
Using \eqref{EQ:solextension} we obtain:
\begin{align*}
\Big|\sum_{i=0}^d \lambda_i \frac{X_i^{n+1}(x)}{{n+1 \choose i}}\Big|
&= \Big|\sum_{i=0}^d \lambda_i \cdot \frac{1}{|\stab(0)|} \sum_{\sigma \in \stab(0)} \frac{X_i^n(\sigma(x))}{{n \choose i}}\Big| \\
&= \Big| \bigg(\frac{1}{|\stab(0)|} \sum_{\sigma \in \stab(0)} \big(\sum_{i=0}^d \lambda_i \frac{X_i^n}{{n \choose i}}\big) \circ \sigma\bigg)(x)\Big|\leq 1
\end{align*}
for all $x\in \cube{n+1}$. Using \eqref{EQ:krawcharacter} again, this shows  that $\lambda$ is a feasible solution of  program  \eqref{EQ:primal} for $n+1$. \qedMP
\end{proof}

\begin{example}
To illustrate the identity \eqref{EQ:solextension}, we give a small example with $n=i=2$. Consider:
\[
    X^2_2 = \sum_{|a| = 2, a \in \cube{2}} \chi_a = \chi_{11}.
\]
The automorphisms in $\stab(0) \subseteq \Aut(\cube{3})$ fixing $0 \in \cube{3}$ are  the permutations of $x_1, x_2, x_3$. So we get:
\begin{align*}
    \frac{1}{|\stab(0)|} \sum_{\sigma \in \stab(0)}  X^2_2 \circ \sigma &= \frac{1}{6}(\chi_{110} + \chi_{101} + \chi_{110} + \chi_{011} + \chi_{101} + \chi_{011}) \\
    &= \frac{2}{6}(\chi_{110} + \chi_{101} + \chi_{011}) = \frac{1}{3}X^3_2,
\end{align*}
and indeed ${2 \choose 2} / {3 \choose 2} = 1/3$.
\end{example}

\begin{lemma}
\label{LEM:sollim}
For $d\ge k \in \N$, define  the program:
\begin{equation}
    \label{EQ:primallim}
    \begin{split}
    \rho(\infty, d, k) := \max \quad &\lambda_k \\
    s.t. \quad &-1 \leq \sum_{i=0}^d \lambda_i \krawlim_i(t) \leq 1 \quad (t \in [0, 1]).
    \end{split}
\end{equation}
Then, for any $n\ge d$,  we have: $\rho(n, d, k) \leq \rho(\infty, d, k)$.
\end{lemma}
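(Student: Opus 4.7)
The plan is to show that every feasible point $\lambda = (\lambda_0, \dots, \lambda_d)$ of program \eqref{EQ:primal} at dimension $n$ is also feasible for the limit program \eqref{EQ:primallim}. Since the two programs share the same objective $\lambda_k$, this immediately yields $\rho(n, d, k) \leq \rho(\infty, d, k)$.

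First, by iterating Lemma \ref{LEM:solextension}, such a $\lambda$ remains feasible for \eqref{EQ:primal} at \emph{every} $n' \geq n$. Writing $\krawnorm_i^{n'}$ to make explicit the dependence on the parameter, this means
\[
\Big|\sum_{i=0}^d \lambda_i\,\krawnorm_i^{n'}(s)\Big| \leq 1 \quad \text{for all } n' \geq n \text{ and every integer } s \in [0:n'].
\]
Fix $t \in [0,1]$. I would choose the sequence $s_{n'} := \lfloor n' t \rfloor \in [0:n']$, so that $s_{n'}/n' \to t$, apply the above inequality with $s = s_{n'}$, and pass to the limit $n' \to \infty$.

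The main (modest) technical point is to justify $\krawnorm_i^{n'}(s_{n'}) \to \krawlim_i(t)$. For this, introduce $g_i^{n'}(u) := \krawnorm_i^{n'}(n' u)$, a polynomial in $u$ of degree at most $i$, and note that by definition of $\krawlim_i$ we have $g_i^{n'}(u) \to \krawlim_i(u)$ pointwise for each $u$. Since all $g_i^{n'}$ share the fixed degree $i$, pointwise convergence at any $i+1$ distinct abscissae forces convergence of the coefficients (via Vandermonde inversion), hence uniform convergence of $g_i^{n'}$ to $\krawlim_i$ on the compact interval $[0,1]$. Combined with the continuity of $\krawlim_i$ and $s_{n'}/n' \to t$, this gives $\krawnorm_i^{n'}(s_{n'}) = g_i^{n'}(s_{n'}/n') \to \krawlim_i(t)$.

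Passing to the limit in the inequality thus yields $|\sum_{i=0}^d \lambda_i\,\krawlim_i(t)| \leq 1$ for every $t \in [0,1]$, proving that $\lambda$ is feasible for \eqref{EQ:primallim}. Taking the supremum over $\lambda$ gives the claimed inequality. The only subtle point is the uniform convergence of the rescaled Krawtchouk polynomials on $[0,1]$; everything else is direct bookkeeping based on Lemma \ref{LEM:solextension} and the explicit description of $\krawlim_i$ from Lemma \ref{lemlimit}.
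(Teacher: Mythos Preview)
Your proof is correct and follows essentially the same strategy as the paper: show that any $\lambda$ feasible for \eqref{EQ:primal} at level $n$ is feasible for the limit program \eqref{EQ:primallim}, using Lemma~\ref{LEM:solextension} to extend to all $n' \geq n$ and then passing to the limit. The only difference is in how the limit is taken: the paper first restricts to rational $t \in [0,1]$, chooses $n_j$ so that $n_j t$ is an \emph{exact} integer (which lets one invoke the definition of $\krawlim_i$ directly without any uniform-convergence argument), and then extends to all $t$ by density and continuity of $\krawlim_i$; your route via $s_{n'} = \lfloor n' t \rfloor$ and uniform convergence of the rescaled polynomials is equally valid but requires the extra Vandermonde step.
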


\begin{proof}
Let $\lambda$ be a feasible solution to $\eqref{EQ:primal}$ for $(n, d, k$). We show that $\lambda$ is feasible for \eqref{EQ:primallim}. For this fix  $t \in [0, 1] \cap \Q$. Then there exists a sequence of integers $(n_j)_j \to \infty$ such that $n_j \geq n$ and $tn_j \in [0, n_j]$ is integer for each $j \in \N$. As $n_j\ge n$, we know from Lemma \ref{LEM:solextension} that $\lambda$ is also a feasible solution of program $\eqref{EQ:primal}$ for $(n_j,d,k)$. Hence, since $n_jt\in [0:n_j]$  we obtain
\[
    |\sum_{i=0}^d \lambda_i \krawnormnosup_{i}^{n_j}(n_j t)| \leq 1 \quad \forall j \in \N.
\]
But this immediately gives:
\begin{equation}
    \label{EQ:feasiblelim}
    |\sum_{i=0}^d \lambda_i \krawlim_{i}(t)| = \lim_{j \to \infty} |\sum_{i=0}^d \lambda_i \krawnormnosup_{i}^{n_j}(n_j t)| \leq 1.
\end{equation}
As $[0, 1] \cap \Q$ lies dense in $[0, 1]$ (and the $\krawlim_i$'s are continuous) we may conclude that \eqref{EQ:feasiblelim} holds for all $t \in [0, 1]$. 
This shows that  $\lambda$ is feasible for \eqref{EQ:primallim}
and we thus have $\rho(n, d, k) \leq \rho(\infty, d, k)$, as desired. \qedMP
\end{proof}
It remains now to compute the optimum solution to the program \eqref{EQ:primallim}. In light of Lemma \ref{lemlimit}, and after a change of variables $x = 1 - 2t$, this program may be reformulated as:
\begin{equation}
    \label{EQ:monomialprogram}
    \begin{split}
    \max \quad & |\lambda_k| \\
    s.t. \quad &-1 \leq \sum_{i=0}^d \lambda_i x^i \leq 1 \quad (x \in [-1, 1]).
    \end{split}
\end{equation}
In other words, we are tasked with finding a polynomial $p(x)$ of degree $d$ satisfying $|p(x)| \leq 1$ for all $x \in [-1, 1]$, whose $k$-th coefficient is as large as possible in absolute value. This is a classical extremal problem solved by V. Markov.
\begin{theorem}[see, e.g., Theorem 7, pp. 53 in \cite{Natanson1964}]
\label{THM:Markovextremal}
For $m \in \N$, let $T_m(x) = \sum_{i=0}^m t_{m, i} x^i$ be the Chebyshev polynomial of degree $m$. Then the optimum solution $\lambda$ to \eqref{EQ:monomialprogram} is given by:
\[
	\sum_{i=0}^d \lambda_i x^i = 
	\begin{cases} 
	T_{d}(x) \quad & \text{if } k \equiv d \mod 2, \\
	T_{d-1}(x) \quad & \text{if } k \not\equiv d \mod 2.
	\end{cases}	
\]
In particular, $\rho(\infty, d, k)$ is equal to $|t_{d, k}|$ (resp. $|t_{d-1, k}|$).
\end{theorem}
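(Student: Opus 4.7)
The plan is to establish Markov's coefficient-extremal theorem by combining a parity-reduction trick with the classical Chebyshev equioscillation principle. I first dispose of the parity mismatch. When $k \not\equiv d \pmod{2}$, for any feasible $p(x) = \sum_i \lambda_i x^i$ with $\|p\|_\infty \leq 1$ on $[-1,1]$, the symmetrization
\[
\tilde p(x) := \tfrac{1}{2}\bigl(p(x) + (-1)^k p(-x)\bigr)
\]
preserves the $x^k$-coefficient and still satisfies $\|\tilde p\|_\infty \leq 1$ by the triangle inequality, while only monomials $x^i$ with $i \equiv k \pmod{2}$ survive. In particular the $x^d$-term vanishes, so $\deg \tilde p \leq d-1$, which reduces the parity-mismatched problem at degree $d$ to the parity-matched problem at degree $d-1$, giving the bound $|t_{d-1,k}|$ once the main case is settled.

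In the main case $k \equiv d \pmod{2}$, I would argue that the optimum is $T_d$. The feasible set is a compact convex subset of the finite-dimensional space $\R[x]_d$ (boundedness of coefficients follows from boundedness of the sup-norm on $[-1,1]$ for fixed degree), so an extremal $p^* \in \R[x]_d$ maximizing the coefficient functional $L_k : p \mapsto \lambda_k$ exists. A standard equioscillation argument then forces $p^*$ to attain $\pm 1$ at $d+1$ points $-1 \leq x_0 < x_1 < \cdots < x_d \leq 1$ with alternating signs: if there were only $m+1 \leq d$ such alternation points, one could find a perturbation $q \in \R[x]_d$ vanishing at these points with $L_k(q) > 0$, and then $p^* + \varepsilon q$ would remain feasible for sufficiently small $\varepsilon > 0$ while strictly improving $L_k$, contradicting extremality. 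Any degree-$d$ polynomial equioscillating $d+1$ times on $[-1,1]$ with sup-norm $1$ must coincide with $\pm T_d$ --- otherwise the difference with $T_d$ would have $d+1$ zeros while having degree at most $d-1$ --- so $\rho(\infty, d, k) = |t_{d,k}|$. The parity hypothesis $k \equiv d \pmod{2}$ guarantees $t_{d,k} \neq 0$, and $T_d$ itself realizes the upper bound.

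The main obstacle I anticipate is the alternation step: carefully constructing the improving direction $q$ when $p^*$ has too few alternation points requires a Lagrange-type interpolation construction plus a case analysis for alternations occurring at the endpoints $\pm 1$. This is standard in Chebyshev approximation theory, however, and the remaining ingredients --- the parity reduction, compactness, and uniqueness of the equioscillating polynomial --- are routine.
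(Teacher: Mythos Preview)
The paper does not prove this theorem; it is quoted as a classical result of V.~Markov with a pointer to Natanson, so there is no proof in the paper to compare against. Your sketch follows the standard route one finds in such references: a parity reduction to dispose of the case $k \not\equiv d \pmod 2$, followed by an alternation/equioscillation argument in the main case. The parity reduction is clean and correct, and you rightly identify the construction of the improving perturbation as the technical step requiring care.

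There is, however, a genuine gap in your uniqueness step. You claim that if $p^*$ equioscillates $d+1$ times with sup-norm $1$ then $p^* = \pm T_d$, because ``otherwise the difference with $T_d$ would have $d+1$ zeros while having degree at most $d-1$.'' Neither half of this is justified. The difference $p^* - T_d$ has degree at most $d$, not $d-1$: nothing in the setup forces the leading coefficients to agree. And the usual sign-change count at the $d+1$ Chebyshev extrema (where $T_d = \pm 1$ and $|p^*| \leq 1$) yields only $d$ zeros of the difference, not $d+1$. A degree-$d$ polynomial with $d$ zeros is of course no contradiction. The uniqueness is true, but the standard argument is more involved: one observes that the $d-1$ interior equioscillation points of $p^*$ are critical points, counts zeros of $(p^*)'$ to force $x_0 = -1$ and $x_d = 1$, and then compares the factorizations of $1 - (p^*)^2$ and $(1-x^2)\big((p^*)'\big)^2$ to obtain the Chebyshev differential equation $(1-x^2)\big((p^*)'\big)^2 = d^2\big(1 - (p^*)^2\big)$, whose degree-$d$ polynomial solutions are exactly $\pm T_d$. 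This is routine once seen, but it is not the one-liner you give, and your sketch treats it as such while flagging the (comparatively more standard) alternation step as the main obstacle.
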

As the coefficients of the Chebyshev polynomials are known explicitely, Theorem \ref{THM:Markovextremal} allows us to give exact values of the constant $\gamma_d$ appearing in our main results (see Table \ref{TAB:harmbound}). Using the following identity:
\[
	\sum_{i=0}^d |t_{d, i}| = \frac{1}{2}(1+\sqrt{2})^d + \frac{1}{2}(1 - \sqrt{2})^d \leq (1+\sqrt{2})^d,
\]
we are also able to concretely estimate:
\[
	\gamma_d \leq \max_{k \leq d} \rho(\infty, d, k) \leq (1+\sqrt{2})^d.
\]

\section{Concluding remarks}
\subsection*{Summary}
We have shown a theoretical guarantee on the quality of the sum-of-squares hierarchy $\lowbound{r} \leq \funcmin$ for approximating the minimum of a polynomial $f$ of degree $d$ over the boolean cube $\cube{n}$. As far as we are aware, this is the first such analysis that applies to values of $r$ smaller than $(n+d) / 2$, i.e., when the hierarchy is not exact. 
Additionally, our guarantee applies to a second, measure-based hierarchy of bounds $\upbound{r} \geq \funcmin$. Our result may therefore also be interpreted as bounding the range $\upbound{r} - \lowbound{r}$. Our analysis also applies to polynomial optimization over the cube $\{\pm 1\}^n$ (by a simple change of variables), over the $q$-ary cube (see Appendix \ref{APP:qary}) and in the setting of matrix-valued polynomials (see Appendix \ref{APP:matrixvalued}).

\subsection*{Analysis for small values of $r$}
A limitation of Theorem \ref{THM:main} is that the analyis of $\lowbound{r}$ applies only for choices of $d, r, n$ satisfying $d(d+1) \xi^n_{r+1} \leq 1/2$. One may partially avoid this limitation by proving a slightly sharper version of Lemma~\ref{LEM:normbound}, showing instead that
$
	\nonlinlamb \leq \linlamb/(1 - \linlamb),
$
assuming now only that $\linlamb \leq 1$. Indeed, Lemma \ref{LEM:normbound} is a special case of this result, assuming that $\linlamb \leq 1/2$ to obtain $\nonlinlamb \leq 2 \linlamb$. 
Nevertheless, our methods exclude values of $r$ outside of the regime $r = \Omega(n)$.

\subsection*{The constant $\harmbound_d$}
The strength of our results depends in large part on the size of the constant $C_d$ 
appearing in Theorem~\ref{THM:main} and Theorem \ref{THM:mainup}, where we may set $C_d=d(d+1)\gamma_d$. Recall that $\gamma_d$ is defined in Lemma~\ref{LEM:cube} as a constant for which $\|p_k\|_{\infty} \leq \gamma_d \|p\|_{\infty}$ for any polynomial ${p = p_0 + p_1 + \ldots + p_d}$ of degree $d$ and $k \leq d$ on $\cube{n}$, independently of the dimension $n$. In Section~\ref{APP:harmcompbound} we have shown the existence of such a constant. Furthermore, we have shown there that we may choose $\gamma_d \leq (1 + \sqrt{2})^d$, and have given an explicit expression for the smallest possible value of $\gamma_d$ in terms of the coefficients of Chebyshev polynomials. Table \ref{TAB:harmbound} lists these values for small $d$.
\begin{table}[h!]
    \centering
    \def\arraystretch{1.3}
    \begin{tabular}{r|ccccccccccc}
    $d$ 			& $1$ & $2$ & $3$ & $4$ & $5$ & $6$ & $7$ & $8$ & $9$ & $10$ \\ 
    \hline
    $\harmbound_d$ 	& $1$ & $2$ & $4$ & $8$ & $20$ & $48$ & $112$ & $256$ & $576$ & $1280$
    \end{tabular}
    \caption{Values of the constant $\harmbound_d$.}
   	\label{TAB:harmbound}
\end{table}

\subsection*{Computing extremal roots of Krawtchouk polynomials} 
Although Theorem \ref{THM:Levenshtein} provides only an asymptotic bound on the least root $\xi_{r}^n$ of $\kraw_r$, it should be noted that $\xi_{r}^n$ can be computed explicitely for small values of $r, n$, thus allowing for a concrete estimate of the error of both Lasserre hierarchies via Theorem \ref{THM:main} and Theorem \ref{THM:mainup}, respectively. 
Indeed, as is well-known, the root $\xi_{r+1}^n$ is equal to the smallest eigenvalue of the  $(r+1)\times (r+1)$ matrix $A$ (aka Jacobi matrix), whose entries are given by $A_{i,j} = \langle t\krawnorm_i(t), \krawnorm_j(t) \rangle_\omega$ for $i,j\in \{0,1,\ldots, r\}$. See, e.g., \cite{Szego1959} for more details.

\subsection*{Connecting the hierarchies}
Our analysis of the \emph{outer} hierarchy $\lowbound{r}$ on $\cube{n}$ relies essentially on knowledge of the  the \emph{inner} hierarchy $\upbound{r}$. Although not explicitely mentioned there, this is the case for the analysis on $S^{n-1}$ in \cite{FangFawzi2019} as well. As the behaviour of $\upbound{r}$ is generally quite well understood, this suggests a potential avenue for proving further results on $\lowbound{r}$ in other settings.

For instance, the inner hierarchy $\upbound{r}$ is known to converge at a rate in $O(1/r^2)$ on the unit ball $B^n$ or the unit box $[-1, 1]^n$, but matching results on the outer hierarchy $\lowbound{r}$ are not available. The question is thus whether the strategy used for the hypersphere  $S^{n-1}$ in \cite{FangFawzi2019} and for the boolean cube  $\cube{n}$ here might be extended to these cases as well.

A difficulty is that the sets $B^n$ and $[-1, 1]^n$ have a more complicated symmetric structure than $S^{n-1}$ and $\cube{n}$, respectively. In particular, the group actions have uncountably many orbits in these cases, and a direct analog of the Funk-Hecke formula \eqref{EQ:FunkHecke} is not available. New ideas are therefore needed to define the kernel $\kernel(x,y)$ (cf. \eqref{EQ:kernelform}) and analyze its eigenvalues.


\subsection*{Acknowledgments}
We wish to thank Sven Polak and Pepijn Roos Hoefgeest for several useful discussions. We also thank the anonymous referees for their helpful comments and suggestions.

\appendix

\section{The $q$-ary cube}
\label{APP:qary}
In this section, we indicate how our results for the boolean cube $\cube{n}$ may be extended to the $q$-ary cube $\qcube{n} = \{0, 1, \dots, q-1\}^n$ when $q > 2$ is a fixed integer. Here $\Z / q\Z$ denotes the cyclic group of order $q$, so that $\qcube{n}=\cube{n}$ when $q=2$. The lower bound $\lowbound{r}$ for the minimum of a polynomial $f$ over $\qcube{n}$ is defined analogously to the case $q=2$; namely we set
\[
	    \lowbound{r} := \sup_{\lambda \in \R} \left\{\lambda: f(x) - \lambda \text{ is a sum-of-squares of degree at most } 2r \text{ on } \qcube{n} \right\},
\]
where the condition means that $f(x) - \lambda$ agrees with a sum of squares $s \in \Sigma[x]_{2r}$ for all $x \in \qcube{n}$ or, alternatively, that $f - \lambda - s$ belongs to the ideal generated by the polynomials $x_i(x_i -1)\ldots(x_i - q + 1)$ for $i \in [n]$. 
Similarly, the upper bound $\upbound{r}$ is defined as in \eqref{EQ:upbound} after equipping $\qcube{n}$ with the uniform measure $\mu$.
The parameters $\lowbound{r}$ and $\upbound{r}$ may again be computed by solving a semidefinite program of size polynomial in $n$ for fixed $r, q \in \N$, see \cite{Laurentfinite}.

As before $d(x,y)$ denotes the Hamming distance and $|x|$ denotes the Hamming weight (number of nonzero components).  Note that, for $x,y\in\qcube{n}$, $d(x,y)$ can again be expressed as a polynomial in $x,y$, with degree $q-1$ in each of $x$ and  $y$.
 
We will prove Theorem \ref{theoq} below, which can be seen as an analog of Corollary \ref{cor2} for $\qcube{n}$.
The general structure of the proof is identical to that of the case $q=2$. We therefore only give generalizations of arguments as necessary. For reasons that will become clear later, it is most convenient to consider the sum-of-squares bound $\lowbound{r}$ on the minimum $\funcmin$ of a polynomial $f$ with degree at most $(q-1)d$ over $\qcube{n}$, where $d\le n$ is fixed.

\subsection*{Fourier analysis on $\qcube{n}$ and Krawtchouk polynomials}
Consider the space
\[
	\redpolys{x} := \C[x] /  (x_i(x_i-1) \ldots (x_i-q+1): i\in [n])    
\]
consisting of the polynomials on $\qcube{n}$ with complex coefficients. We equip $\redpolys{x}$ with its natural inner product 
$$\langle f,g\rangle_{\mu}=  \int_{\qcube{n}}f(x)\overline{g(x)}d\mu(x)={1\over q^n} \sum_{x \in \qcube{n}} f(x)\overline{g(x)},$$
where $\mu$ is the uniform  measure on $\qcube{n}$.
  The space $\redpolys{x}$ has dimension $|\qcube{n}| = q^n$ over $\C$ and it is spanned by the polynomials of degree up to $(q-1)n$.
The reason we  now need to work with polynomials with complex coefficients is that the characters have complex coefficients when $q>2$.
 
Let $\psi = e^{2\pi i / q}$ be a primitive $q$-th root of unity.  For $a \in \qcube{n}$, the associated {\em character} $\chi_a \in \redpolys{x}$ is defined by:
\[
	\chi_a(x) = \psi^{a \cdot x}\quad (x\in \qcube{n}). 
\]
So  \eqref{EQ:chardef} is indeed the special case of this definition when $q=2$.
The set of all characters $\{ \chi_a : a \in \qcube{n} \}$ forms an orthogonal basis for $\redpolys{x}$ w.r.t. the above inner product $\langle \cdot, \cdot \rangle_{\mu}$. A character $\chi_a$ can be written as a polynomial of degree  $(q-1) \cdot |a|$ on $\qcube{n}$, i.e., we have $\chi_a \in \redpolys{x}_{(q-1)|a|}$ for all $a \in \qcube{n}$.

As before, we have the direct sum decomposition into pairwise orthogonal subspaces:
\[
	\redpolys{x} = H_0 \perp H_1 \perp \dots \perp H_n,
\]
where $H_i$ is spanned by the set $\{ \chi_a : |a| = i \}$ and $H_i\subseteq\R[x]_{(q-1)i}$.
The components $H_i$ are invariant and irreducible under the action of $\Aut(\qcube{n})$, which is generated by the coordinate permutations and the action of $\Sym(q)$ on individual coordinates. Hence any $p \in \redpolys{x}$ of degree at most  $(q-1)d$ can be (uniquely) decomposed as:
\[
	p = p_0 + p_1 + \dots + p_d \quad (p_i \in H_i).
\]
As before $\stab(0) \subseteq \Aut (\qcube{n})$ denotes the stabilizer of $0 \in \qcube{n}$, which is generated by the coordinate permutations and the permutations in $\Sym(q)$ fixing $0$ in $\{0,1,\ldots,q-1\}$ at any individual coordinate.
We note for later reference that the subspace of $H_i$ invariant under action of $\stab(0)$ 
  is of dimension one, and is spanned by the zonal spherical function:
\begin{equation}
\label{EQ:qzonal}
\charsum_i = \sum_{|a| = i}\chi_a \in H_i.
\end{equation}

The Krawtchouk polynomials introduced in Section \ref{SEC:Fourier} have the following generalization in the $q$-ary setting:
\begin{equation*}
    \kraw_k(t) = \kraw_{k, q} (t) := \sum_{i=0}^k (-1)^i (q-1)^{k-i} {t \choose i} {n-t \choose k-i}.
\end{equation*}
Analogously to relation \eqref{EQ:krawortho}, the Krawtchouk polynomials $\kraw_k$ ($0\le k\le n$) are pairwise orthogonal  w.r.t. the discrete measure $\omega$ on $[0:n]$ given by:
\begin{equation}\label{eqwq}
\omega(t) = \frac{1}{q^n} \sum_{t=0}^n w(t) \delta_t, \text{ with } w(t) := (q-1)^t {n \choose t}.
\end{equation}
To be precise, we have:
\[
	\sum_{t=0}^n \kraw_{k}(t) \kraw_{k'}(t) (q-1)^t {n \choose t} = \delta_{k, k'} (q-1)^k {n \choose k}.
\]
As $\kraw_k(0) = (q-1)^k {n \choose k} = \| \kraw_k \|^2_{\omega}$, we may normalize  $\kraw_k$ by setting:
\[
	\krawnorm_k(t) := \kraw_k(t) / \kraw_k(0) = \kraw_k(t) / \| \kraw_k \|^2_{\omega},
\]
so that $\krawnorm_k$ satisfies $\max_{t=0}^n \krawnorm_k(t) = \krawnorm_k(0) = 1$ (cf. \eqref{EQ:krawnorm}).

We have the following connection (cf. \eqref{EQ:xt}) between the characters and the Krawtchouk polynomials:
\begin{equation}
	\label{EQ:qxt}
	\sum_{a \in \qcube{n} : |a| = k} \chi_a(x) = \kraw_k(i) \quad \text{ for  } x\in \qcube{n} \text{ with } |x| = i.
\end{equation}
Note that for all $a, x, y \in \qcube{n}$, we have:
\begin{align*}
	\chi_a^{-1}(x) = \overline{\chi_a(x)} = \chi_a(-x), \quad
	\chi_a(x) \chi_a(y) = \chi_a(x + y).
\end{align*}
Hence, for  any $x,y\in\qcube{n}$, we  also have (cf.  \eqref{EQ:krawcharacter}):
\begin{equation*}
\sum_{a \in \qcube{n} : |a| = k} \chi_a(x)\overline{\chi_a(y)} = \sum_{a \in \qcube{n} : |a| = k} \chi_a(x-y) =  \kraw_k(i) \quad \text{when } d(x,y) = |x-y| = i.
\end{equation*}

\subsection*{Invariant kernels} In analogy to the binary case $q=2$, for a degree $r$ univariate polynomial $u\in \R[t]_r$ we define  the associated polynomial kernel  $\kernel(x,y) := u^2(d(x,y))$ ($x,y\in \qcube{n}$) and the associated kernel operator:
\[
	\kernelop : p \mapsto \kernelop p (x) = \int_{\qcube{n}} \overline{p(y)} K(x,y)d\mu(y) = {1\over q^n}\sum_{y \in \qcube{n}} \overline{p(y)}K(x,y) \quad (p \in \redpolys{x}).
\]
Note that $K(x,y)$ is a polynomial on $\qcube{n}$ with degree $2r(q-1)$ in each of $x$ and $y$.  Let us decompose the univariate polynomial $u(t)^2$ in the Krawtchouk basis as $$u(t)^2=  \sum_{i=0}^{2r} \lambda_i \kraw_i(t).$$
Then the  kernel operator $\kernelop$ acts as follows on characters:  for $z \in \qcube{n}$, 
\[
	\kernelop \chi_z = \lambda_{|z|}\chi_z,
\]
which can be seen by retracing the proof of Theorem \ref{THM:FunckHecke}, and we obtain  the Funk-Hecke formula (recall \eqref{EQ:FunkHecke}): for any polynomial $p \in \redpolys{x}_{(q-1)d}$ with Harmonic decomposition
$p=p_0+\ldots+p_d$, 
\[
\kernelop p = \lambda_0 p_0 + \dots + \lambda_d p_d. 
\]

\subsection*{Performing the analysis}
It remains to find a univariate polynomial $u \in \R[t]$ of degree $r$ with $u^2(t) = \sum_{i=0}^{2r}\lambda_i \kraw_i(t)$ for which $\lambda_0 = 1$ and the other scalars $\lambda_i$ are close to $1$. As before (cf. \eqref{eq:krawnormlambda}), we have:
\[
	\lambda_i = \langle \kraw_i, u^2 \rangle_{\omega} / \| \kraw_i \|_{\omega}^2 = \langle \krawnorm_i, u^2 \rangle_{\omega}.
\]
So we would like to minimize $\sum_{i=1}^{2r} (1-\lambda_i)$. We are therefore interested in the inner Lasserre hierarchy applied to the minimization of the function $g(t) = d - \sum_{i = 0}^d \krawnorm_i(t)$ on the set $[0:n]$ (equipped with the measure $\omega$ from (\ref{eqwq})). 
We show first that this function $g$ again has a nice linear upper estimator.

\begin{lemma}
\label{LEM:qkrawabsdist}
We have:
\begin{equation}
\label{EQ:qkrawabsdist}
\begin{aligned}
    |\krawnorm_k(t) - \krawnorm_k(t+1)| &\leq \frac{2k}{n}, \quad\quad &&(t=0,1,\dots,n-1)\\
    |\krawnorm_k(t) - 1| &\le {2k\over n} \cdot t &&(t=0,1,\dots,n)
\end{aligned}
\end{equation}
for all $k \leq n$. 
\end{lemma}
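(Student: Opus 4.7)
The plan is to adapt the argument of Lemma~\ref{LEM:krawabsdist} to the $q$-ary setting, replacing the $\pm 1$ character values by their $q$-th root of unity analogues. The key new ingredient is identity \eqref{EQ:qxt}, together with the multiplicativity $\chi_a(x+y)=\chi_a(x)\chi_a(y)$ and the fact that $|\chi_a(x)|=1$ for all $a,x\in\qcube{n}$.

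Fix $t\in[0:n-1]$ and consider the two elements $x_t:=1^t0^{n-t}$ and $x_{t+1}:=1^{t+1}0^{n-t-1}$ of $\qcube{n}$, which differ only in coordinate $t+1$ (where $x_t$ has entry $0$ and $x_{t+1}$ has entry $1$). By \eqref{EQ:qxt} we may write
\[
\kraw_k(t)-\kraw_k(t+1)=\sum_{|a|=k}\bigl(\chi_a(x_t)-\chi_a(x_{t+1})\bigr).
\]
If $a_{t+1}=0$ then $\chi_a(x_t)=\chi_a(x_{t+1})$ and the corresponding term vanishes. For $a$ with $a_{t+1}\neq 0$, write $x_{t+1}=x_t+e_{t+1}$ (with $e_{t+1}$ the $(t+1)$-th standard basis vector); then $\chi_a(x_{t+1})=\chi_a(x_t)\cdot\psi^{a_{t+1}}$, so
\[
\bigl|\chi_a(x_t)-\chi_a(x_{t+1})\bigr|=|\chi_a(x_t)|\cdot|1-\psi^{a_{t+1}}|\le 2.
\]

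Next I count the number of $a\in\qcube{n}$ with $|a|=k$ and $a_{t+1}\neq 0$: one chooses a nonzero value for $a_{t+1}$ ($q-1$ choices), selects the remaining $k-1$ nonzero coordinates from the other $n-1$ positions, and assigns each a nonzero value, giving a total of $(q-1)\cdot\binom{n-1}{k-1}(q-1)^{k-1}=(q-1)^k\binom{n-1}{k-1}$. Combined with the triangle inequality this yields
\[
|\kraw_k(t)-\kraw_k(t+1)|\le 2(q-1)^k\binom{n-1}{k-1}.
\]
Since $\kraw_k(0)=(q-1)^k\binom{n}{k}$, dividing through gives
\[
|\krawnorm_k(t)-\krawnorm_k(t+1)|\le \frac{2(q-1)^k\binom{n-1}{k-1}}{(q-1)^k\binom{n}{k}}=\frac{2k}{n},
\]
proving the first inequality of \eqref{EQ:qkrawabsdist}.

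For the second inequality, observe that $\krawnorm_k(0)=1$, so a telescope sum combined with the first inequality yields
\[
|\krawnorm_k(t)-1|=\Bigl|\sum_{s=0}^{t-1}\bigl(\krawnorm_k(s+1)-\krawnorm_k(s)\bigr)\Bigr|\le t\cdot\frac{2k}{n},
\]
as required. The only step that required genuinely new input beyond the binary case is the bound $|1-\psi^{a_{t+1}}|\le 2$ (which is of course trivial) and the counting of $\{|a|=k:a_{t+1}\neq 0\}$; no real obstacle is expected.
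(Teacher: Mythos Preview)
Your proof is correct and follows essentially the same approach as the paper's own proof: both use \eqref{EQ:qxt} to compare $\kraw_k(t)$ and $\kraw_k(t+1)$ via the elements $1^t0^{n-t}$ and $1^{t+1}0^{n-t-1}$, observe that only terms with $a_{t+1}\neq 0$ contribute, count those as $(q-1)^k\binom{n-1}{k-1}$, and then divide by $\kraw_k(0)=(q-1)^k\binom{n}{k}$; the second inequality follows by the same telescope argument. Your version is slightly more explicit in justifying the bound $|\chi_a(x_t)-\chi_a(x_{t+1})|\le 2$ via the multiplicativity $\chi_a(x_t+e_{t+1})=\chi_a(x_t)\psi^{a_{t+1}}$, but this is the only (minor) difference.
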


\begin{proof}
The proof is almost identical to that of Lemma \ref{LEM:krawabsdist}. Let $t\in [0:n-1]$ and $0 < k \leq d$. Consider the elements $1^t 0^{n-t}, 1^{t+1} 0^{n-t-1} \in \qcube{n}$ from Lemma \ref{lemxt}. Then we have:
\begin{align*}
|\kraw_k(t) - \kraw_k(t+1)| 
&\overset{\eqref{EQ:qxt}}{=}
 |\sum_{|a| = k} \chi_a(1^t 0^{n-t}) - \chi_a(1^{t+1} 0^{n-t-1})| \\
&\leq 2\cdot \#\big\{ a \in \qcube{n} : |a| = k, a_{t+1} \neq 0 \big\} =2\cdot  (q-1)^k \cdot {n - 1 \choose k-1},
\end{align*}
where for the  inequality we note that $\chi_a(1^{t} 0^{n-t}) = \chi_a(1^{t+1} 0^{n-t-1})$ if $a_{t+1} = 0$. As $\kraw_k(0) = (q-1)^k {n \choose k}$, this implies that:
\[
|\krawnorm_k(t) - \krawnorm_k(t+1)| \leq 2\cdot {n - 1 \choose k-1} / {n \choose k} = \frac{2k}{n}.
\]
This shows the first inequality of \eqref{EQ:qkrawabsdist}. The second inequality follows using the triangle inequality, a telescope summation argument and the fact that $\krawnorm_k(0)=1$. \qedMP
\end{proof}
From Lemma \ref{LEM:qkrawabsdist}  we obtain  that the function $g(t)= d - \sum_{i = 0}^d \krawnorm_i(t) $ admits the following linear upper estimator: $g(t) \leq d(d+1) \cdot (t/n)$ for $t \in [0:n]$.
Now the same arguments as used for the case $q=2$ enable us to conclude:
$$
\upbound{(q-1)r}-\funcmin \leq C_d \cdot \xi_{r+1,q}^n / n
$$
and, when $d(d+1) \xi_{r+1,q}^n / n \leq 1/2$,
\[
	\funcmin - \lowbound{(q-1)r} \leq 2 C_d \cdot \xi_{r+1,q}^n / n.
\]
Here $C_d$ is a constant depending only on $d$ and $\xi_{r+1,q}^n$ is the least root of the Krawtchouk polynomial $\kraw_{r+1, q}$. Note that as the kernel $\kernel(x,y) = u^2(d(x,y))$ is of degree $2(q-1)r$ in $x$ (and $y$), we are only able to analyze the corresponding levels $(q-1)r$ of the hierarchies.
We come back below to the question on how to show the existence of the above constant $C_d$.

But first we finish the analysis. Having shown analogs of Theorem \ref{THM:main} and Theorem \ref{THM:mainup} in this setting, it remains to state the following more general version of Theorem \ref{THM:Levenshtein}, giving information about the smallest roots of the $q$-ary Krawtchouk polynomials.
\begin{theorem}[\cite{Levenshtein}, Section 5]\label{THM:krawrootq}
Fix $t\in [0,{q-1\over q}]$. Then the smallest roots $\xi^n_{r,q}$ of the $q$-ary Krawtchouk polynomials $\kraw_{r, q}$ satisfy:

\begin{equation}\label{eqphitq} 
	\lim_{r/n \to t} \xi_{r,q}^n / n = \varphi_q(t) := \frac{q-1}{q} - \left( \frac{q-2}{q}\cdot t + \frac{2}{q} \sqrt{(q-1)t(1-t)}\right).
\end{equation}
Here the above limit   means that, for any sequences $(n_j)_j$ and $(r_j)_j$ of integers such that $\lim_{j\to\infty} n_j=\infty$ and 
$\lim_{j\to\infty}r_j/n_j=t$, we have $\lim_{j\to\infty} \xi_{r_j,q}^{n_j}/n_j =\varphi_q(t)$.
\end{theorem}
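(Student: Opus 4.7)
The plan is to combine a generating-function representation of the $q$-ary Krawtchouk polynomials with a classical saddle-point analysis as $n\to\infty$ with $r/n\to t$. The starting point is the identity
\[
G_s(z) \;:=\; (1+(q-1)z)^{n-s}(1-z)^{s} \;=\; \sum_{k=0}^{n} \kraw_{k,q}(s)\, z^{k} \qquad (s\in[0:n]),
\]
which follows by multiplying out the two binomials and matching coefficients against the definition of $\kraw_{k,q}$ given in Appendix~\ref{APP:qary}. Cauchy's formula then yields the contour representation
\[
\kraw_{r,q}(s) \;=\; \frac{1}{2\pi i}\oint_{C}\frac{(1+(q-1)z)^{n-s}(1-z)^{s}}{z^{r+1}}\, dz
\]
for a small positively-oriented loop $C$ around the origin.

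The next step is to rescale: setting $s=\sigma n$ and $r=\rho n$ (with $\rho\to t$ along the sequence at hand), the integrand takes the form $e^{n\phi_\sigma(z)}/z$ with
\[
\phi_\sigma(z) \;=\; (1-\sigma)\log(1+(q-1)z)+\sigma\log(1-z)-\rho\log z.
\]
Clearing denominators in $\phi_\sigma'(z)=0$ reduces the saddle-point equation to the quadratic
\[
(q-1)(\rho-1)\,z^{2} + \bigl[(q-1)-q\sigma-\rho(q-2)\bigr]\, z - \rho \;=\; 0,
\]
whose discriminant vanishes precisely when $\bigl[(q-1)-q\sigma-\rho(q-2)\bigr]^{2} = 4\rho(q-1)(1-\rho)$. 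Solving for the smaller root in $\sigma$ yields exactly $\sigma=\varphi_q(\rho)$ as in \eqref{eqphitq} (the larger root gives the symmetric upper end of the asymptotic support of the zeros).

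The behaviour of $\kraw_{r,q}(\sigma n)$ then splits into two regimes. For $\sigma<\varphi_q(\rho)$ the two saddle points are real, deforming $C$ through the dominant one produces an integrand of constant sign along the steepest-descent contour, and so $\kraw_{r,q}(\sigma n)$ is exponentially large with a fixed sign and has no zeros. For $\sigma>\varphi_q(\rho)$ the two saddles become complex conjugate, and steepest descent through them produces an oscillatory factor of the form $e^{n\,\mathrm{Re}\,\phi}\cos\bigl(n\,\mathrm{Im}\,\phi+\psi\bigr)$, so that zeros first appear precisely at the turning point. This already forces $\xi^n_{r,q}/n \to \varphi_q(t)$ as $n\to\infty$ with $r/n\to t$.

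The main obstacle is the analysis \emph{at} the turning point $\sigma=\varphi_q(\rho)$, where the two saddles coalesce and the ordinary stationary-phase expansion breaks down. Making the limit rigorous requires a uniform Airy-type expansion in a neighbourhood of the coalescence, carried out either in the style of Olver's method for integrals with coalescing saddles or via the nonlinear steepest-descent / Riemann--Hilbert framework of Baik--Kriecherbauer--McLaughlin--Miller for discrete orthogonal polynomials. Such an expansion gives $\kraw_{\rho n,q}(\sigma n)\sim c_n \cdot \mathrm{Ai}\bigl(c\,n^{2/3}(\sigma-\varphi_q(\rho))\bigr)$ to leading order, and the first zero of $\mathrm{Ai}$ then pins down the smallest root of $\kraw_{r,q}$ to lie within $O(n^{-2/3})$ of $\varphi_q(\rho)\cdot n$, matching the quantitative correction already visible in the binary bound \eqref{EQ:Levenshtein}. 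Once this turning-point analysis is in place, passage from $r/n\to t$ along arbitrary sequences follows from continuity of $\varphi_q$ on $[0,(q-1)/q]$ by a routine uniformity argument.
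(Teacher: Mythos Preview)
The paper does not prove this theorem; it is quoted from Levenshtein's survey and used as a black box, both here and in the binary case (Theorem~\ref{THM:Levenshtein}). There is therefore no proof in the paper to compare your proposal against.

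That said, your outline is along the standard route to such asymptotics and the algebra checks out: the generating-function identity, the saddle-point equation, and the identification of the discriminant-vanishing locus $\sigma=\varphi_q(\rho)$ are all correct. Two remarks on the sketch itself. First, for the bare limit statement \eqref{eqphitq} you do not strictly need the full uniform Airy analysis: it is enough to show (i) that $\kraw_{r,q}$ keeps a fixed sign on $[0,(\varphi_q(\rho)-\epsilon)n]$, which follows from the real-saddle regime where the deformed contour integral has constant sign, and (ii) that there is a sign change somewhere in $((\varphi_q(\rho)-\epsilon)n,(\varphi_q(\rho)+\epsilon)n)$, which follows from the complex-conjugate-saddle regime where the cosine factor produces zeros with spacing $O(1)$. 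The Airy machinery is only needed for the $O(n^{-2/3})$ correction visible in \eqref{EQ:Levenshtein}. Second, what you have written is a plan rather than a proof: the contour deformations, the uniformity in $\sigma$ across a neighbourhood of the turning point, and the passage from asymptotics of $\kraw_{r,q}(\sigma n)$ to the precise location of the smallest real root all require justification you have not supplied. If your goal is merely to invoke the result, citing Levenshtein as the paper does is appropriate; if your goal is an independent proof, the sketch needs to be filled in substantially.
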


Note that for $q=2$ we have $\varphi_q(t) ={1\over 2} -\sqrt{t(1-t)}$, which is the function $\varphi(t)$ from \eqref{eqphitt}. 
To avoid technical details we only quote in Theorem \ref{THM:krawrootq} the asymptotic analog of Theorem \ref{THM:Levenshtein} (and not the exact bound on the root $\xi^n_{r,q}$ for any $n$).  Therefore we have shown the following  $q$-analog of Corollary \ref{cor2}.

\begin{theorem}\label{theoq}
Fix $d\le n$ and for $n,r\in\N$ write
\begin{align*}
E_{(r)}(n) &:= \sup_{f \in \R[x]_{(q-1)d}} \big\{\funcmin - \lowbound{r} : \|f\|_\infty = 1 \big\}, \\ 
E^{(r)}(n) &:= \sup_{f \in \R[x]_{(q-1)d}} \big\{ \upbound{r} - \funcmin : \|f\|_\infty = 1 \big\}.
\end{align*}
There exists a constant $C_d>0$ (depending also on $q$) such that, for any $t\in [0,{q-1\over q}]$, we have:
$$
\lim_{r/n\to t} E^{((q-1)r)}(n)  \le C_d\cdot \varphi_q(t)
$$
and, if $d(d+1)\cdot \varphi_q(t) \le 1/2$, then we also have:
$$
\lim_{r/n\to t} E_{((q-1)r)}(n)  \le 2\cdot C_d\cdot \varphi_q(t).
$$
Here $\varphi_q(t)$ is the function defined in (\ref{eqphitq}). Recall that  the  limit notation $r/n\to t$  means that the claimed convergence holds  for any sequences $(n_j)_j$ and $(r_j)_j$ of integers such that $\lim_{j\to\infty} n_j=\infty$ and 
$\lim_{j\to\infty}r_j/n_j=t$. 
\end{theorem}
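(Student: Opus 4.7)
The plan is to combine the non-asymptotic error bounds already recorded in this appendix with the asymptotic formula for the least Krawtchouk root provided by Theorem \ref{THM:krawrootq}. No new ideas are needed beyond a careful passage to the limit.

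First I would quote the two bounds stated just before the theorem: for every $f\in \R[x]_{(q-1)d}$ with $\|f\|_\infty = 1$ and every admissible pair $r,n\in\N$, we have $\upbound{(q-1)r}-\funcmin\le C_d\cdot \xi_{r+1,q}^n/n$, and under the extra hypothesis $d(d+1)\xi_{r+1,q}^n/n\le 1/2$ we have $\funcmin-\lowbound{(q-1)r}\le 2C_d\cdot \xi_{r+1,q}^n/n$. Both are uniform in $f$ within the class considered, so taking the supremum over $f$ gives the corresponding estimates for $E^{((q-1)r)}(n)$ and $E_{((q-1)r)}(n)$.

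Now let $(n_j)_j,(r_j)_j$ be sequences with $n_j\to\infty$ and $r_j/n_j\to t\in [0,(q-1)/q]$. By Theorem \ref{THM:krawrootq}, $\xi_{r_j+1,q}^{n_j}/n_j\to \varphi_q(t)$, and in particular the admissibility condition on $r_j/n_j$ holds for all $j$ large. Inserting this limit into the first bound yields $\lim_j E^{((q-1)r_j)}(n_j)\le C_d\varphi_q(t)$. Under the hypothesis $d(d+1)\varphi_q(t)\le 1/2$ the same convergence ensures that $d(d+1)\xi_{r_j+1,q}^{n_j}/n_j\le 1/2$ for all but finitely many $j$ (in the borderline case $d(d+1)\varphi_q(t)=1/2$ one may pass to a subsequence along which the condition holds, or absorb a vanishing slack into the limsup), so the second bound yields $\lim_j E_{((q-1)r_j)}(n_j)\le 2C_d\varphi_q(t)$.

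The real obstacle is in fact not this final step but the assertion that ``the same arguments as used for the case $q=2$ enable us to conclude'' the two non-asymptotic bounds. To make that claim precise one has to re-run the full chain from Sections \ref{SEC:mainproof} and \ref{SEC:generalize} on $\qcube{n}$. The Funk-Hecke formula and Lemma \ref{LEM:qkrawabsdist} have already been verified in this appendix; what remains is an analog of Lemma \ref{LEM:cube}, namely a dimension-free constant $\gamma_d=\gamma_d(q)$ with $\|p_k\|_\infty\le \gamma_d\|p\|_\infty$ for every harmonic decomposition $p=p_0+\cdots+p_d$ with $p_i\in H_i$ on $\qcube{n}$. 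One would obtain this by repeating the strategy of Section \ref{APP:harmcompbound}: restrict to $\stab(0)$-invariant polynomials via the zonal spherical functions \eqref{EQ:qzonal}, reformulate the sup-norm bound as a linear program in the coefficients with respect to the $q$-ary $\krawnorm_i$, and pass to a dimension-free limiting problem as $n\to\infty$. Solving that limiting extremal problem (a $q$-ary analog of \eqref{EQ:monomialprogram}, which in the binary case was solved by Chebyshev polynomials via V. Markov's theorem) is the main technical hurdle one would need to address.
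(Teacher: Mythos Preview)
Your proposal is correct and follows exactly the paper's approach: the theorem is deduced by combining the non-asymptotic bounds recorded just before its statement with the root asymptotics of Theorem~\ref{THM:krawrootq}, while the $q$-ary analog of Lemma~\ref{LEM:cube} is supplied separately via the same reduction to a limiting linear program. The only detail the paper adds beyond your outline is the explicit computation $\krawlim_k(t)=(1-\tfrac{q}{q-1}t)^k$, which after the change of variables $x=1-\tfrac{q}{q-1}t$ reduces the limiting extremal problem to the same Markov-type question on the interval $[-\tfrac{1}{q-1},1]$, so finiteness of $\rho(\infty,d,k)$ follows just as in the binary case.
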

For reference, the function $\varphi_q(t)$ is shown for several values of $q$ in Figure \ref{FIG:qphi}.

\begin{figure}[h!]
    \centering
    \begin{tikzpicture}
        \begin{axis}[ 
        axis lines = middle,
        x label style={at={(1,0)},right},
 		y label style={at={(0,1.03)},above},
 		set layers,                   
        grid, grid style=dashed,
        xlabel=$t$,
        ylabel={$\varphi_q(t)$},
        xmax=1,
        xmin=0,
        ymax = 1,
        width= 0.80 \textwidth,
        height= 0.4 \textwidth,
        xtick = {0.1, 0.2, 0.3, 0.4, 0.5, 0.6, 0.7, 0.8, 0.9, 1.0},
        ytick = {0.1, 0.2, 0.3, 0.4, 0.5, 0.6, 0.7, 0.8, 0.9, 1.0},
        legend cell align={left}
        ] 
        \addplot+[very thick, mark=none, domain=0.000:.5, samples = 200] 
        {(2-1 - (2-2)*x - 2*((2-1)*x*(1-x))^0.5)/2};
        \addplot+[very thick, mark=none, domain=0.000:.666, samples = 200] 
        {(3-1 - (3-2)*x - 2*((3-1)*x*(1-x))^0.5)/3};
        \addplot+[very thick, mark=none, domain=0.000:.75, samples = 200] 
        {(4-1 - (4-2)*x - 2*((4-1)*x*(1-x))^0.5)/4};
        \addplot+[very thick, mark=none, domain=0.000:.8, samples = 200] 
        {(5-1 - (5-2)*x - 2*((5-1)*x*(1-x))^0.5)/5};
        \legend{$q=2$, $q=3$, $q=4$, $q=5$}
        \end{axis}
    \end{tikzpicture}
    \caption{The function $\varphi_q(t)$ for several values of $q$. Note that the case $q=2$ corresponds to the function $\varphi(t)$ of \eqref{eqphitt}.}
    \label{FIG:qphi}
\end{figure}
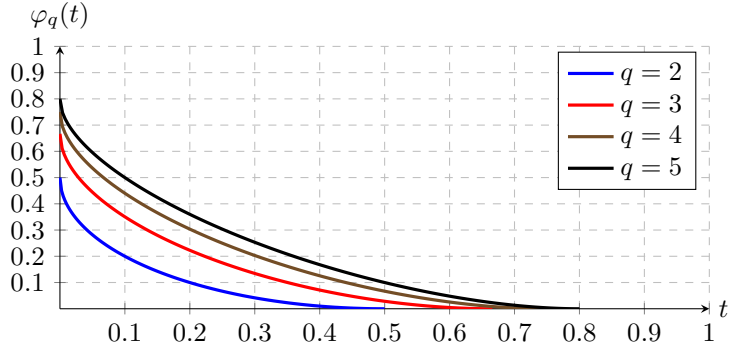

\subsection*{A generalization of Lemma \ref{LEM:cube}}
The arguments above omit a generalization of Lemma \ref{LEM:cube}, which is instrumental  to show the existence of the constant $C_d$ claimed above.
In other words, we still need to show that if $p : \qcube{n} \rightarrow \R$ is a polynomial of degree $(q-1)d$ on $\qcube{n}$ with harmonic decomposition
$p=p_0+\ldots +p_d$, there then exists a constant $\gamma_d > 0$ (independent of  $n$) such that:
\[
	\|p_i\|_{\infty} \leq \gamma_d \| p \|_{\infty} \text{ for all } 0\le i \leq d.
\]
Then, as in the binary case,  we may set $C_d=d(d+1)\gamma_d$.
The proof given in  Section \ref{APP:harmcompbound} for the case $q=2$ applies almost directly to the general case, and we only generalize certain steps as required. So consider again the parameters:
\begin{align*}
    \rho(n, d, k) &:= \sup \{\|p_k\|_\infty : p = p_0 + p_1 + \dots + p_d\in\R[x]_{(q-1)d}, \|p\|_{\infty} \leq 1 \}, \text{ and }\\
    \rho(n, d) &:= \max_{0 \leq k \leq d} \rho(n, d, k).
\end{align*}
Lemmas \ref{LEM:claimsimp1} and \ref{LEM:claimsimp2}, which show that the optimum solution $p$ to $\rho(n, d, k)$ may be assumed to be invariant under $\stab(0) \subseteq \Aut(\qcube{n})$, clearly apply to the case $q > 2$ as well. That is to say, we may assume $p$ is of the form\footnote{Note that as $p$ is assumed to be real-valued, the coefficients $\lambda_i$ must be real. Indeed, for each $a \in \qcube{n}$, we have $\langle p, \chi_a \rangle_{\mu} = \lambda_{|a|} \| \chi_a\|^2 = \lambda_{|a|} \| \chi^{-1}_{a}\|^2 = \langle p, \overline{\chi_a} \rangle_{\mu} = \overline{\langle p, \chi_a \rangle_{\mu}}$.}:
\[	
	p(x) = \sum_{i=0}^d \lambda_i \charsum_i(x) \quad (\lambda_i \in \R)
\]
where $\charsum_i = \sum_{|a| = i}\chi_a \in H_i$ is the zonal spherical function of degree $(q-1)i$ (cf. \eqref{EQ:qzonal} and \eqref{EQ:charsum}).
Using \eqref{EQ:qxt}, we obtain a reformulation of $\rho(n, d, k)$ as an LP (cf. \eqref{EQ:primal}):
\begin{equation}
\label{EQ:qprimal}
    \begin{split}
    \rho(n, d, k) 
    = \max \quad &\lambda_k \\
    s.t. \quad & -1 \leq \sum_{i=0}^d \lambda_i \krawnorm_{i, q}(t) \leq 1 \quad (t=0,1, \dots, n).  
    \end{split}
\end{equation}
For $k \in \N$, let $\krawlim_k(t) := \lim_{n \to \infty} \krawnorm_k(nt) = \Big(1 - \frac{q}{q-1}t\Big)^k$ and consider the program (cf. \eqref{EQ:primallim}):
\begin{equation}
    \label{EQ:qprimallim}
    \begin{split}
    \rho(\infty, d, k) := \max \quad &\lambda_k \\
    s.t. \quad &-1 \leq \sum_{i=0}^d \lambda_i \krawlim_i(t) \leq 1 \quad (t \in [0, 1]).
    \end{split}
\end{equation}
As before, we have $\rho(n, d, k) \leq \rho(\infty, d, k)$, noting that (the proofs of) Lemma \ref{LEM:solextension} and Lemma \ref{LEM:sollim} may be applied directly to the case $q>2$. From there, it suffices to show $\rho(\infty, d, k) < \infty$, which can be argued in an analogous way to the case $q=2$.

\section{Matrix-valued polynomials}
\label{APP:matrixvalued}
In this section, we show how the arguments used for the proofs of our main results in Theorem \ref{THM:main} and Theorem \ref{THM:mainup} may be applied in the setting of matrix-valued polynomials, thereby proving Theorem \ref{THM:mainmatrix} and Theorem \ref{THM:mainupmatrix}. 

Recall that $\matspace$ is the space of $k\times k$ real symmetric matrices  and $\matspace[x] \subseteq \R^{k \times k}[x]=\R[x]^{k\times k}$ is the space of $n$-variate polynomials whose coefficients lie in $\matspace$. Given a polynomial matrix $F\in \matspace[x]$ we consider the matrix-valued polynomial optimization problem:
\begin{equation}\label{eqFmin}
	F_{\min} := \min_{x \in \cube{n}} \lambda_{\min}(F(x)),
\end{equation}
for which we have the outer Lasserre hierarchy:
\begin{equation}
    F_{(r)} := \sup_{\lambda \in \R} \left\{F(x) - \lambda \cdot I = S(x) \text{ on } \cube{n} \text{ for some } S \in \Sigma^{k \times k}_r \right\},
\end{equation}
and the inner Lasserre hierarchy:
\begin{equation}
\label{EQ:APPmatinner}
	 F^{(r)} := \inf_{S \in \Sigma^{k \times k}_r} \left \{ \int_{\cube{n}} {\rm Tr} \big( F(x) S(x)\big) d\mu(x) : \int_{\cube{n}} {\rm Tr} \big(S(x)\big) d\mu(x) = 1\right \}.
\end{equation}
Here, the set $\Sigma^{k \times k}_r$ consists of all sum-of-squares polynomial matrices $S \in \matspace[x]$, of the form:
\[
	S(x) = \sum_{i} U_i(x) U_i(x)^\top \quad (U_i \in \R^{k \times m}[x], ~~{\rm \deg} ~U_i \leq r,~~m\in \N).
\]
\subsection*{The outer hierarchy  (proof of Theorem \ref{THM:mainmatrix})}
We generalize the outline of Section \ref{SEC:overview} to the matrix-valued setting. Let $F\in \matspace[x]$ be the  polynomial matrix of degree $d$ to be optimized, and assume w.l.o.g. that $0 \leq \|F\|_\infty \leq 1$. Here, and throughout this section, $\|F\|_\infty := \max_{x \in \cube{n}} \| F(x) \|$ is the largest absolute value of an eigenvalue of $F(x)$ over $\cube{n}$. A kernel $\kernel$ of the form $\kernel(x, y) = u^2(d(x,y))$ with $u \in \R[t]_r$ (cf. \eqref{EQ:kernelform}) induces a linear operator $\kernelop$ on $\matspace[x]$ by:
\[
	\kernelop P (x) := \int_{\cube{n}} P(y) K(x,y) d \mu(y) = \frac{1}{2^n} \sum_{y \in \cube{n}} P(y) K(x, y) \quad (P \in \matspace[x]).
\]
If $P(x) \succeq 0$ for all $x \in \cube{n}$, then  the polynomial $\kernelop P$ is a sum-of-squares polynomial matrix of degree at most $2r$ on $\cube{n}$. Indeed, we then have:
\[
	\kernelop P(x) =  \frac{1}{2^n} \sum_{y \in \cube{n}} U_y(x) U_y(x)^\top, \quad \text{ where } U_y(x) = u(d(x,y)) \sqrt{P(y)}.
\]

Given $\delta > 0$ to be determined later, set $\tilde F = F + \delta I$. Assuming that $\kernelop$ is non-singular, we can write $F = \kernelop (\kernelop^{-1} \tilde F)$. Therefore, assuming that $\kernelop^{-1} \tilde F$ is positive semidefinite over $\cube{n}$, we find that $F + \delta I$ is a sum-of-squares polynomial matrix of degree $2r$ on $\cube{n}$, and thus that $F_{\min} - F_{(r)} \leq \delta$. 

To guarantee positive semidefiniteness of $\tilde F$, it suffices to ensure that (cf. \eqref{EQ:overview1}):
\[
\|\kernelop^{-1} - I\| := \sup_{P \in \matspace[x]_d} \frac{\| \kernelop^{-1} P  - P \|_\infty}{\|P\|_\infty} \leq \delta.
\]
Indeed, as the smallest eigenvalue of $\tilde F(x)$ is at least $\delta$ for each $x \in \cube{n}$, the smallest eigenvalue of $\kernelop^{-1}F(x)$ must then be at least zero.

As in the case of scalar-valued polynomials, the eigenvalues of $\kernelop$ are given by the coefficients $\lambda_i$ in the expansion ${u^2(t) = \sum_{i = 0}^{2r} \lambda_i \kraw_i(t)}$. Indeed, if $P \in \matspace[x]$ is a  polynomial  matrix of degree $d$ then we may decompose it into harmonic components entry-wise to obtain 
${P(x) = \sum_{i=0}^d P_i(x)}$ and (cf. Theorem \ref{THM:FunckHecke}):
\[
	\kernelop P(x) = \sum_{i=0}^d \lambda_i P_i(x).
\]

It remains to express the quantity $\|\kernelop^{-1} - I\|$ in terms of the eigenvalues $\lambda_i$ of $\kernelop$, after which the proof proceeds as in the case of scalar polynomials. For this, note that:
\[
\|\kernelop^{-1} P - P\|_\infty = \|\sum_{i=1}^d (\lambda_i^{-1} - 1)P_i \|_\infty \leq \sum_{i=1}^d |\lambda_i^{-1} - 1| \|P_i\|_\infty \leq \sum_{i=1}^d |\lambda_i^{-1} - 1| \cdot \gamma_d \| P \|_\infty.
\]
where $\gamma_d$ is the constant of Lemma \ref{LEM:cube} (cf. \eqref{EQ:kernelharmbound}). The last inequality relies on the following generalization of Lemma~\ref{LEM:cube}, whose proof here is essentially as given in \cite{FangFawzi2019}.
\begin{lemma}
\label{LEM:harmboundmatrix}
Let $P(x) = \sum_{i=0}^d P_i(x)$ be a  polynomial matrix of degree $d$, decomposed into harmonic components. If $\gamma_d$ is the constant of Lemma \ref{LEM:cube}, we then have:
\[
	\|P_i\|_{\infty} \leq \gamma_d \|P\|_\infty \quad \text{ for all } i \leq d.
\]
\end{lemma}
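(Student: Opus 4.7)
The plan is to reduce the matrix-valued statement to the scalar-valued Lemma \ref{LEM:cube} by testing against unit vectors. Fix a unit vector $v\in\R^k$ and consider the scalar polynomial $p_v(x):=v^\top P(x) v \in \redpolys{x}_d$. Since the harmonic decomposition is defined entrywise and commutes with multiplication by constant vectors (each $H_i$ is a vector space closed under scalar combinations of its entries), the harmonic decomposition of $p_v$ is given by $p_v = \sum_{i=0}^d v^\top P_i v$, with $v^\top P_i v \in H_i$ for each $i$.

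Next I would bound $\|p_v\|_\infty$ by $\|P\|_\infty$: for any $x\in\cube{n}$, the symmetric matrix $P(x)$ satisfies $|v^\top P(x) v|\le \|P(x)\|\le \|P\|_\infty$ when $\|v\|=1$. Applying the scalar Lemma~\ref{LEM:cube} to $p_v$ then yields
\[
\|v^\top P_i v\|_\infty \;\le\; \gamma_d\, \|p_v\|_\infty \;\le\; \gamma_d\,\|P\|_\infty
\]
for every $i\le d$ and every unit vector $v$.

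To finish, I would use that each $P_i(x)$ is symmetric (as $P(x)$ is symmetric and symmetry is preserved entrywise by the harmonic projection onto $H_i$), so that the spectral norm admits the variational characterization $\|P_i(x)\|=\max_{\|v\|=1}|v^\top P_i(x) v|$. Swapping the order of the two maxima gives
\[
\|P_i\|_\infty = \max_{x\in\cube{n}}\max_{\|v\|=1}|v^\top P_i(x) v| = \max_{\|v\|=1}\|v^\top P_i v\|_\infty \le \gamma_d\,\|P\|_\infty,
\]
which is the desired inequality.

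The only slightly delicate point to verify is that the harmonic components $P_i$ are themselves symmetric matrix polynomials, which is needed for the final variational step; this follows because projection onto $H_i$ is a linear operation on the entries (commuting with transposition) and $P=P^\top$ forces $P_i=P_i^\top$. Everything else is essentially a bookkeeping reduction, so I do not expect a genuine obstacle here — the content of the lemma is really carried by its scalar-valued counterpart.
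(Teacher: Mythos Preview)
Your proof is correct and follows essentially the same approach as the paper: reduce to the scalar case by considering the quadratic form $p_v(x)=v^\top P(x)v$ for unit vectors $v$, apply Lemma~\ref{LEM:cube} to $p_v$, and conclude via the variational characterization of the spectral norm. Your additional remark justifying that each $P_i$ is symmetric (which the paper leaves implicit) is a welcome clarification.
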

\begin{proof}
For any matrix $M \in \matspace$, its spectral norm is  $\| M \|_{} = \max_{y \in \R^k} \{ |y^\top M y| : \|y\| = 1\}$. Therefore, we have:
\[
\|P\|_\infty = \max_{x \in \cube{n}} \max_{\|y\| = 1} |y^\top P(x) y| \quad \text{ and } \quad \|P_i\|_\infty = \max_{x \in \cube{n}} \max_{\|y\| = 1} |y^\top P_i(x) y|.
\]
For fixed $y$, the function $p^y : x \mapsto y^\top P(x) y$ is a (scalar) polynomial on $\cube{n}$ of degree $d$, whose harmonic components are given by $p^y_i : x \mapsto y^\top P_i(x) y$. Therefore, we may invoke Lemma \ref{LEM:cube} to bound:
\[
	\max_{x \in \cube{n}} |p_i^y(x)| \leq \gamma_d \max_{x \in \cube{n}} |p^y(x)| ~ \text{for all } \|y\| = 1,
\]
and conclude that $\|P_i\|_\infty \leq \gamma_d \|P\|_\infty $. \qedMP
\end{proof}

\subsection*{The inner hierarchy (proof of Theorem \ref{THM:mainupmatrix})}
We generalize the arguments of Section \ref{SEC:generalize} to the matrix-valued setting. Let $F$ again be the  polynomial matrix of degree $d$ to be optimized, and assume w.l.o.g. that $0 \leq \|F\|_\infty \leq 1$ and that the minimum in the optimization problem (\ref{eqFmin})  is attained at $0$, i.e., that $F_{\min} = \lambda_{\min} \big(F(0)\big)$.

As in the scalar case, we work to reduce problem \eqref{EQ:APPmatinner} to a (now matrix-valued) instance of the inner hierachy in one variable. Note first that $F^{(r)} \leq F_{\rm sym}^{(r)}$ for each $r \in \N$, where $F_{\rm sym}^{(r)}$ is obtained by restricting the optimization in \eqref{EQ:APPmatinner} to  polynomial matrices  $S(x)$ of the form $S(x) = U(|x|)$. Writing $\widehat F$ for the univariate  polynomial matrix satisfying 
\[
	\widehat F(|x|) = \frac{1}{|\stab(0)|}\sum_{\sigma \in \stab(0)} F \circ \sigma (x) \quad (x \in \cube{n}),
\]
we find (cf. \eqref{EQ:symmetrizedinner}):
\begin{equation}
\label{EQ:symmetrizedinnermatrix}
	F_{\rm sym}^{(r)} = \min_{U \in \Sigma^{k \times k}_r[t]} \Big\{ \int_{[0:n]} {\rm Tr}\big(\widehat F(t)U(t)\big) d\omega(t) : \int _{[0:n]} {\rm Tr}\big(U(t)\big) d \omega(t) = 1 \Big\}.
\end{equation}
It remains to analyze the program \eqref{EQ:symmetrizedinnermatrix}. We first give a linear upper estimator for $\widehat F$ (cf. Lemma \ref{LEM:Flinearupperestimator}).
\begin{lemma}
For all $t \in [0:n]$, we have:
\[
\widehat F(t) \preceq \widehat G(t) := d(d+1) \cdot \gamma_d \cdot t/n \cdot I + C,  
\]
where $C=\widehat F(0)$ is a constant matrix with $\lambda_{\min}(C) = 0$.
\end{lemma}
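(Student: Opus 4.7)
The strategy is to reduce the matrix-valued Löwner inequality to the scalar-case Lemma~\ref{LEM:Flinearupperestimator} by testing against unit vectors. Since $A \preceq B$ in $\mathcal{S}^k$ is equivalent to $y^\top A y \leq y^\top B y$ for every unit vector $y \in \R^k$, it suffices to establish the scalar inequality
\[
y^\top \widehat F(t) y \;\leq\; d(d+1)\,\gamma_d \cdot t/n \;+\; y^\top \widehat F(0)\, y \qquad \text{for all } t \in [0:n], \; \|y\|=1.
\]

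For a fixed unit vector $y$, consider the scalar polynomial $g^y(x) := y^\top F(x) y$, which has degree at most $d$ and satisfies $\|g^y\|_\infty \leq \|F\|_\infty \leq 1$. Its $\stab(0)$-symmetrization is precisely $y^\top \widehat F(|x|) y$, which is $\stab(0)$-invariant and thus a univariate polynomial in $|x|$; expand it in the normalized Krawtchouk basis as $y^\top \widehat F(t) y = \sum_{i=0}^d \alpha_i^y \,\krawnorm_i(t)$ with $\alpha_i^y \in \R$. As in the proof of Lemma~\ref{LEM:Flinearupperestimator}, each term $\alpha_i^y\, \krawnorm_i(|x|)$ is (a constant multiple of) the $i$-th harmonic component of the symmetrized scalar polynomial on $\cube{n}$, so Lemma~\ref{LEM:cube} combined with $\max_{t \in [0:n]} |\krawnorm_i(t)| = 1$ (Lemma~\ref{LEM:kraw1}) gives the crucial coefficient bound $|\alpha_i^y| \leq \gamma_d$, uniformly in $y$.

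Now I mimic the telescoping estimate of Lemma~\ref{LEM:Flinearupperestimator}: applying Lemma~\ref{LEM:krawabsdist} yields
\[
y^\top \bigl(\widehat F(t) - \widehat F(0)\bigr) y = \sum_{i=0}^d \alpha_i^y \bigl(\krawnorm_i(t) - 1\bigr) \;\leq\; \sum_{i=0}^d |\alpha_i^y| \cdot \frac{2i}{n}\cdot t \;\leq\; \gamma_d \cdot d(d+1) \cdot t/n,
\]
which is the required scalar inequality. Taking the supremum over unit $y$ establishes the Löwner bound. Finally, because every $\sigma \in \stab(0)$ fixes $0 \in \cube{n}$ pointwise, we have $C := \widehat F(0) = F(0)$; together with the normalization $F_{\min} = \lambda_{\min}(F(0)) = 0$, this gives $\lambda_{\min}(C) = 0$. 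No serious obstacle arises — the only delicate point is the identification of the Krawtchouk expansion of the symmetrized polynomial with its harmonic decomposition on $\cube{n}$, which is exactly the mechanism that powered the scalar proof and carries over verbatim to each quadratic form $y^\top F(\cdot) y$.
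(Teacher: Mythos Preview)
Your proof is correct and essentially follows the same idea as the paper's. The only organizational difference is that the paper expands $\widehat F$ directly in the Krawtchouk basis with matrix coefficients $\Lambda_i \in \matspace$ and bounds $\|\Lambda_i\|$ via the matrix-valued harmonic estimate Lemma~\ref{LEM:harmboundmatrix}, whereas you first test against a unit vector $y$ and then invoke the scalar Lemma~\ref{LEM:cube}; since Lemma~\ref{LEM:harmboundmatrix} is itself proved by the unit-vector reduction, you have effectively inlined that lemma, and the two arguments are equivalent (indeed $\alpha_i^y = y^\top \Lambda_i y$). Your additional remark that $\widehat F(0) = F(0)$ because $\stab(0)$ fixes $0$ is a nice clarification the paper leaves implicit.
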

\begin{proof}
We may write $\widehat F(t) = \sum_{i=0}^d \Lambda_i \krawnorm_i(t)$ for certain $\Lambda_i \in \matspace$. We then have:
\begin{align*}
\sum_{i=0}^d \Lambda_i \krawnorm_i(t) &= \sum_{i=0}^d \Lambda_i \big(\krawnorm_i(t) - 1 \big) + \sum_{i=0}^d \Lambda_i \\
& \preceq \max_{i=0}^d \|\Lambda_i\|_\infty \cdot I \cdot \sum_{i=0}^d |1 - \krawnorm_i(t)| + \sum_{i=0}^d \Lambda_i \\
& \preceq d(d+1) \cdot \gamma_d \cdot t/n \cdot I + \sum_{i=0}^d \Lambda_i,
\end{align*}
making use of Lemma \ref{LEM:harmboundmatrix} and \eqref{EQ:krawabsdist} for the final inequality. It remains to note that $\sum_{i=0}^d \Lambda_i = \widehat F(0)$, and that $\lambda_{\min} (\widehat F(0)) = 0$ by assumption. \qedMP
\end{proof}
As $\widehat F(t) \preceq \widehat G(t)$ for all $t \in [0:n]$, we have $F_{\rm sym}^{(r)} \leq \widehat G^{(r)}_{[0:n], \omega}$, where:
\[
	\widehat G^{(r)}_{[0:n], \omega} = \min_{U \in \Sigma^{k \times k}_r[t]} \Big\{ \int_{[0:n]} {\rm Tr}\big(\widehat G(t)U(t)\big) d\omega(t) : \int _{[0:n]} {\rm Tr}\big(U(t)\big) d \omega(t) = 1 \Big\}
\]
is the inner Lasserre hierarchy for $G$ computed on $[0:n]$ w.r.t. the measure $\omega$.
To conclude the argument, we prove the following generalization of Theorem \ref{THM:deKlerkLaurentinner} (see also Remark \ref{REM:linearunivariate}).

\begin{corollary}
Let $G(t) = ct \cdot I + C$ be a linear matrix-valued polynomial with $c>0$ and $\lambda_{\min}(C) = 0$. Then we have:
\[
G^{(r)}_{[0:n], \omega} \leq c \cdot \xi^n_{r+1},
\] 
where $\xi^n_{r+1}$ is the least root of the degree $r+1$ Krawtchouk polynomial.

\begin{proof}
Let $u$ be a unit eigenvector for $C$ corresponding to (one of its) zero eigenvalues. Then for any univariate sum-of-squares polynomial $s \in \Sigma_r$, the matrix-valued polynomial $U(t) = s(t) u u^\top$ is a sum-of-squares polynomial matrix of degree $2r$. Furthermore, for such a $U$ we have:
\[
	\int_{[0:n]} {\rm Tr}\big( G(t)U(t)\big) d\omega(t) = \int_{[0:n]} ct \cdot s(t) d\omega(t)
\]
and
\[
	\int_{[0:n]} {\rm Tr}\big(U(t)\big) d\omega(t) = \int_{[0:n]} s(t) d\omega(t).
\]
Therefore, writing $g(t) = ct$, and making use of Theorem \ref{THM:deKlerkLaurentinner} and Remark \ref{REM:linearunivariate}, we have:
\[
G^{(r)}_{[0:n], \omega} \leq \inf_{s \in \Sigma_r} \left\{\int_{[0:n]} ct \cdot s(t) d\omega(t) : \int_{[0:n]} s(t) d\omega(t) = 1 \right\} = g^{(r)}_{[0:n], \omega} = c \cdot \xi_{r+1}^n.
\]
This concludes the proof. \qedMP
\end{proof}
\end{corollary}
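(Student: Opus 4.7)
The plan is to exhibit an explicit feasible rank-one density for the matrix-valued inner program that reduces its value to a scalar inner program, and then to evaluate the latter by Theorem~\ref{THM:deKlerkLaurentinner}. The structural assumption $\lambda_{\min}(C) = 0$ is exactly what lets me do this: I pick any unit vector $u \in \R^k$ with $Cu = 0$, and for an arbitrary univariate sum-of-squares $s \in \Sigma_r$ I form the rank-one matrix polynomial
\[
U(t) := s(t)\, u u^\top.
\]
A decomposition $s = \sum_i p_i^2$ lifts to $U = \sum_i (p_i u)(p_i u)^\top$, so $U$ is a genuine sum-of-squares polynomial matrix of degree at most $2r$, i.e., $U \in \Sigma^{k\times k}_r[t]$.

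Next I would plug this $U$ into the definition of $G^{(r)}_{[0:n],\omega}$. Using $u^\top C u = 0$ and $u^\top u = 1$, both traces collapse to scalars:
\[
\mathrm{Tr}\big(G(t) U(t)\big) = s(t)\, u^\top (ct I + C) u = ct\cdot s(t), \qquad \mathrm{Tr}\big(U(t)\big) = s(t).
\]
Hence every scalar sum-of-squares density $s$ feasible for the scalar inner program on $[0:n]$ with objective $g(t) := ct$ lifts to a matrix density $U$ feasible for $G^{(r)}_{[0:n],\omega}$ with identical objective value. This immediately gives
\[
G^{(r)}_{[0:n],\omega} \;\le\; g^{(r)}_{[0:n],\omega}.
\]

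To conclude, I would invoke Theorem~\ref{THM:deKlerkLaurentinner} together with Remark~\ref{REM:linearunivariate} applied to the linear univariate function $g(t) = ct$ on $[0:n]$ with the measure $\omega$. The Krawtchouk polynomials are precisely the orthogonal polynomials for $\omega$ on $[0:n]$ by \eqref{EQ:krawortho}, so the smallest root of the degree $r+1$ orthogonal polynomial is $\xi_{r+1}^n$, and $g^{(r)}_{[0:n],\omega} = c\cdot \xi_{r+1}^n$. Combining with the previous display yields the claimed bound. I do not anticipate any serious obstacle: the only point that requires care is the existence of the vector $u$ used in the rank-one ansatz, and this is guaranteed precisely by the hypothesis $\lambda_{\min}(C) = 0$.
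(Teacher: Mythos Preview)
Your proposal is correct and follows essentially the same approach as the paper's own proof: choose a unit eigenvector $u$ of $C$ for eigenvalue $0$, use the rank-one ansatz $U(t) = s(t)\,uu^\top$ to reduce the matrix-valued inner program to the scalar one for $g(t)=ct$, and then invoke Theorem~\ref{THM:deKlerkLaurentinner}/Remark~\ref{REM:linearunivariate}. If anything, you spell out slightly more detail than the paper (the explicit sum-of-squares decomposition of $U$ and the identification of the Krawtchouk polynomials as the orthogonal family for $\omega$).
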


%
%

\end{document}